\author{Gangyong Lee and Mauricio Medina-B\'arcenas}
\title{Finite $\Sigma$-Rickart modules\footnote{This is a modified and extended version of the version submitted for publication.}}
\newtheorem{thm}{Theorem}[section]
\newtheorem{cor}[thm]{Corollary}
\newtheorem{lem}[thm]{Lemma}
\newtheorem{prop}[thm]{Proposition}
\theoremstyle{definition}
\newtheorem{defn}[thm]{Definition}
\newtheorem{exam}[thm]{Example}
\newtheorem{rem}[thm]{Remark}
\numberwithin{equation}{section}
\DeclareMathOperator{\Add}{Add}
\DeclareMathOperator{\add}{add}
\DeclareMathOperator{\Ker}{Ker}
\DeclareMathOperator{\Img}{Im}
\DeclareMathOperator{\Mat}{Mat}
\DeclareMathOperator{\Hom}{Hom}
\DeclareMathOperator{\End}{End}
\DeclareMathOperator{\Rad}{Rad}
\newcommand{\ess}{\leq^\text{ess}}
\newcommand{\dleq}{\leq^\oplus}
\begin{document}
\address{}
\address{Gangyong Lee, Department of Mathematics Education, Chungnam National University}
\address{Yuseong-gu Daejeon 34134, Republic of Korea}
\address{e-mail: lgy999$@$cnu.ac.kr}
\address{}
\address{Mauricio Medina-B\'arcenas, Facultad de Ciencias F\'isico-Matem\'aticas, Benem\'erita}
\address{Universidad Aut\'onoma de Puebla, Av. San Claudio y 18 Sur, Col.San Manuel,}
\address{Ciudad Universitaria, 72570, Puebla, M\'exico.}
\address{e-mail: mmedina$@$fcfm.buap.mx}


\subjclass[2010]{Primary 16D40; 16D50; 16E50; 16E60; 16S50}

\begin{abstract} 
In this article, we study the notion of a finite $\Sigma$-Rickart module, as a module theoretic analogue of a right semi-hereditary ring.
A module $M$ is called  \emph{finite $\Sigma$-Rickart} if every finite direct sum of copies of $M$ is a Rickart module.
It is shown
that any direct summand and any direct sum of copies of a finite $\Sigma$-Rickart module are finite $\Sigma$-Rickart
modules. We also provide generalizations in a module theoretic setting of the most common
results of semi-hereditary rings. Also, we have a characterization of a finite $\Sigma$-Rickart module in terms of its endomorphism ring.
In addition, we introduce $M$-coherent modules and provide a characterization of finite $\Sigma$-Rickart modules in terms of $M$-coherent modules. At the end, we study when $\Sigma$-Rickart modules and finite $\Sigma$-Rickart modules coincide.
Examples which delineate the concepts and results are provided.
\end{abstract}

\maketitle

Key Words: semi-hereditary ring, finite $\Sigma$-Rickart module, Rickart module, $M$-coherent module, f-injective, $M$-pure epimorphism

\vspace{0.5cm}
\section{Introduction}\label{intro}
After \emph{hereditary rings} were introduced by Kaplansky in the earliest 50's, many mathematicians studied \emph{semi-hereditary rings} as a natural generalization of hereditary rings. Recall that a ring $R$ is said to be \emph{right semi-hereditary} if every finitely generated right ideal of $R$ is projective. In \cite{smallex} L. Small gives an example of a ring $R$ which is right semi-hereditary but $R$ is not right hereditary. Following the research on hereditary rings, many characterizations of semi-hereditary rings also have been made. For instance, in \cite{homalg} is proved that a ring $R$ is right semi-hereditary if and only if every finitely generated submodule of a projective $R$-module is projective. Later, in \cite[Theorem 2]{megibbenabsolutely} is shown that a ring $R$ is right semi-hereditary if and only if factor modules of absolutely pure $R$-modules are absolutely pure (also, if and only if factor modules of injective $R$-modules are f-injective). Chase in \cite{c1} characterizes a right semi-hereditary ring as a right coherent ring whose right ideals are flat. Very close to right semi-hereditary rings are right Rickart rings. A ring $R$ is said to be \emph{right Rickart} if the right annihilator of any element of $R$ is generated by an idempotent.
 Small in \cite[Proposition]{s} proves that a ring $R$ is right semi-hereditary if and only if $\Mat_n(R)$ is a right Rickart ring for every positive integer $n$. In 2012 Lee, Rizvi, and Roman in \cite{lrr3} extend Small's result with the theory of Rickart modules. A right $R$-module $M$ is called \emph{Rickart} if $\Ker\varphi$ is a direct summand of $M$ for any $\varphi\in\End_R(M)$. They prove that a ring $R$ is right semi-hereditary if and only if $R^{(n)}$ is a Rickart module for every positive integer $n$ \cite[Theorem 3.6]{lrr3}. Inspired by the last result, in this paper we define {finite $\Sigma$-Rickart modules}.
 A right $R$-module $M$ is called \emph{finite $\Sigma$-Rickart} if $M^{(n)}$ is a Rickart module for every $n>0$. We present many properties of these modules extending those for right semi-hereditary rings. For a module $M$, we will focus on  finitely $M$-generated modules as a more general concept of finitely generated modules. We study a finite $\Sigma$-Rickart module $M$ using the class 
, $\add(M)$, which is the analogue to that of finitely generated projective modules, $\add(R)$, in the case of the ring $R$. 
To get the module theoretic version of Megibben's result \cite[Theorem 2]{megibbenabsolutely} mentioned above, we introduce the class $\mathfrak{F}_M$: in the case of the ring $M=R$, $\mathfrak{F}_R$ is the class of absolutely pure modules. We will compare $\mathfrak{F}_M$ with the class $\mathfrak{E}_M$ which is introduced in \cite{lm1}. 
Note that $\mathfrak{E}_R$ is the class of injective modules.

After the introduction and some preliminary background, in Section 2, finite $\Sigma$-Rickart modules are defined, some examples are presented, and the general properties of these modules are studied. It is proved that direct summands and finite direct sums of copies of a finite $\Sigma$-Rickart module inherit the properties (Lemma \ref{summandher}). It is shown that  $M$ is finite $\Sigma$-Rickart if and only if every finitely $M$-generated submodule of an element in $\add(M)$ has $D_2$ condition (Theorem \ref{semihered2}). 
We introduce the class $\mathfrak{F}_M$ for a right $R$-module $M$ and characterize finite $\Sigma$-Rickart modules in terms of this new class (Theorem \ref{semifac}) which is a module theoretic version of \cite[Theorem 2]{megibbenabsolutely}. 
At the end of the section we provide a characterization of an endoregular module in terms of a finite $\Sigma$-Rickart module as well as a characterization of von Neumann regular rings as a corollary (Theorem \ref{semiendo} and Corollary \ref{shabsvn}, respectively). 

Our focus in Section 3 is on the endomorphism ring of a finite $\Sigma$-Rickart module. We introduce the concept of $M$-coherent modules and we link it under intrinsically projective modules. That is, when $M$ is intrinsically projective, two characterizations for an $M$-coherent module $M$ in terms of the intersection property of finitely $M$-generated submodules of $M$ (Theorem \ref{coh3}) and in terms of when $\End_R(M)$ is a right coherent ring (Theorem \ref{coh}) are provided.
 These results will help us to characterize a finite $\Sigma$-Rickart module in terms of its endomorphism ring. A module $M$ is finite $\Sigma$-Rickart if and only if $S=\End_R(M)$ is a right semi-hereditary ring and $_SM$ is flat if and only if $M$ is intrinsically projective, $M$-coherent and all right $S$-ideals are flat (Theorem \ref{endoringsh}). 

When a module $_SM$ is flat where $S=\End_R(M)$ in Section 4, we prove that  
  $M\in \mathfrak{E}_M$ if and only if $S$ is a right self-injective ring, and 
  $M\in\mathfrak{F}_M$ if and only if $S$ is a right f-injective ring (Corollary \ref{sremss12}). Also, for $M=\bigoplus_{i=1}^n H_i^{(\ell_i)}$, if $H_i$ is an indecomposable endoregular module and  $H_i$ is $H_j$-Rickart  for all $1\leq i,j\leq n$ then $\End_R(M)$ is a semiprimary PWD (Corollary \ref{corrickperf}). Therefore as an application, if $M$ is a finite $\Sigma$-Rickart module and $P$ is any simple module such that $\Hom_{R}(M, P)=0$ then $M^{(\ell)}\oplus P^{(n)}$ is a finite $\Sigma$-Rickart module for any $\ell, n>0$ (Proposition \ref{mandsimple}). At the end, we characterize those finite $\Sigma$-Rickart modules with semiprimary endomorphism ring (Proposition \ref{ricksp}). This allows us to determine when the concepts of $\Sigma$-Rickart and fintie $\Sigma$-Rickart coincide (Corollary \ref{srfsr}).
\vspace{.2cm}

Throughout this paper, $R$ is an associative ring with unity and $M$ is a unitary
right $R$-module. For a right $R$-module $M$, $S=\text{End}_R(M)$ will
denote the endomorphism ring of $M$; thus $M$ can be viewed as a
left $S$- right $R$-bimodule. For $\varphi \in S$, $\text{Ker} \varphi$ and
$\text{Im} \varphi$ stand for the kernel and the image of $\varphi$,
respectively. 
The notations  $N\leq M$, $N \unlhd M$, $N \leq^\text{ess} M$, and $N\leq^\oplus M$ mean that $N$ is  a
submodule, a fully invariant submodule, an essential submodule, and a direct summand of $M$, respectively. We use $M^{(n)}$ to denote the direct sum of $n$ copies of $M$. 
By $\mathbb{Q}$, $\mathbb{Z}$, and $\mathbb{N}$ we denote the set of rational, integer, and natural numbers, respectively. For $1<n\in\mathbb{N}$, $\mathbb{Z}_n$ denotes the
$\mathbb{Z}$-module $\mathbb{Z}/n\mathbb{Z}$.
We also denote $\mathbf{r}_R(N)=\{r\in R\,|\, Nr=0\}$ and $\mathbf{l}_S(N)=\{\varphi\in S\,|\, \varphi N=0\}$  for $N\leq M$, and $\mathbf{r}_S(I)=\{\varphi\in S\,|\, \varphi I=0\}$ for $I_S\leq S$.
\vspace{.2cm}

In \cite{lrr} was introduced the concept of Rickart modules and were presented many properties of them.

\begin{defn}
A right $R$-module $M$ is called \emph{Rickart} if $\Ker\varphi$ is a direct summand of $M$ for every endomorphism $\varphi\in \End_R(M)$ \cite{lrr}. $M$ is called \emph{endoregular} if $\End_R(M)$ is a von Neumann regular ring \cite{lrr5}.
\end{defn}

Recall that a module $M$ is said to have \emph{$D_2$ condition} if $\forall N \leq M$ with $M/N\cong M'\leq^\oplus M$, we have $N\leq^\oplus M$. Note that any Rickart module and any projective module satisfies  $D_2$ condition. Dually, $M$ is said to have \emph{$C_2$ condition} if $\forall N \leq M$ with $N\cong M'\leq^\oplus M$, we have $N\leq^\oplus M$.

\begin{prop}\label{ricd2}
The following statements hold true for a right $R$-module $M$:
\begin{enumerate}
\item[(i)] \emph{(\cite[Proposition 2.11]{lrr})} $M$ is a Rickart module if and only if $M$ has $D_2$ condition and $\Img\varphi$ is isomorphic to a direct summand of $M$ for all $\varphi\in\End_R(M)$.
\item[(ii)] \emph{(\cite[Theorem 1.11]{lrr5})}
$M$ is an endoregular module if and only if $M$ is a Rickart module and $M$ has $C_2$ condition.
\item[(iii)]\emph{(\cite[Proposition 2.26]{lrr5})} $M$ is a projective Rickart module if and only if $\Img \varphi$ is projective for each $\varphi\in \End_R(M)$.
\end{enumerate}
\end{prop}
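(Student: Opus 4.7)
Throughout, write $S=\End_R(M)$, and recall that direct summands of $M$ correspond to idempotents of $S$. The strategy in all three parts is to manufacture an endomorphism of $M$ whose kernel or image realizes the submodule one wishes to split, and then invoke the hypothesis. Thus for (i), given $\varphi\in S$, the Rickart hypothesis yields $M=\Ker\varphi\oplus K$, whence $\varphi|_K\colon K\to\Img\varphi$ is an isomorphism and $\Img\varphi\cong K\leq^\oplus M$. The $D_2$ condition follows from the following construction: given $N\leq M$ with $M/N\cong M'\leq^\oplus M$, let $\pi\colon M\twoheadrightarrow M/N$ be the projection, fix an isomorphism $\alpha\colon M/N\to M'$, and let $\iota\colon M'\hookrightarrow M$ be the inclusion; then $\varphi=\iota\alpha\pi\in S$ satisfies $\Ker\varphi=N$, which is therefore a summand by Rickart. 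Conversely, given $\varphi\in S$, the hypothesis supplies $M/\Ker\varphi\cong\Img\varphi\cong M'\leq^\oplus M$, and $D_2$ forces $\Ker\varphi\leq^\oplus M$.

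For (ii), suppose $S$ is von Neumann regular, pick $\varphi\in S$, and choose $\psi\in S$ with $\varphi=\varphi\psi\varphi$. Then $\psi\varphi$ and $\varphi\psi$ are idempotents of $S$ satisfying $\Ker(\psi\varphi)=\Ker\varphi$ and $\Img(\varphi\psi)=\Img\varphi$, yielding both Rickart and $\Img\varphi\leq^\oplus M$. The $C_2$ condition follows by a mirror of the construction in (i): given $N\cong M'\leq^\oplus M$, compose a projection $M\twoheadrightarrow M'$, an isomorphism $M'\to N$, and the inclusion $N\hookrightarrow M$ to obtain $\varphi\in S$ with $\Img\varphi=N$, hence $N\leq^\oplus M$. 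For the converse, fix $\varphi\in S$, use Rickart to split $M=\Ker\varphi\oplus K$ so that $\Img\varphi\cong K\leq^\oplus M$, and then $C_2$ gives $M=\Img\varphi\oplus L$. Define $\psi\in S$ to be zero on $L$ and the inverse of $\varphi|_K$ on $\Img\varphi$; a routine check gives $\varphi\psi\varphi=\varphi$, so $S$ is von Neumann regular.

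Part (iii) then follows from (i): if $M$ is projective and Rickart, each $\Img\varphi$ is isomorphic to a summand of the projective module $M$, hence projective; conversely, $M=\Img(\mathrm{id}_M)$ is projective, and for each $\varphi\in S$ the surjection $M\twoheadrightarrow\Img\varphi$ splits by projectivity of $\Img\varphi$, so $\Ker\varphi$ is a summand. The only step requiring genuine care, and the one reused in every part, is the construction of an endomorphism of $M$ with prescribed kernel or image, which is what bridges the submodule-level $D_2$ and $C_2$ conditions with the endomorphism-level definitions of Rickart and endoregular modules.
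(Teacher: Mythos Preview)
Your proof is correct and self-contained. Note, however, that the paper does not prove this proposition at all: it is stated as a compilation of results quoted from \cite{lrr} and \cite{lrr5}, with no argument given beyond the citations. What you have written is essentially the standard proof one would find in those sources---the key device being, as you observe, the construction of an endomorphism with prescribed kernel (for $D_2$) or image (for $C_2$) by composing a projection, an isomorphism, and an inclusion. Since the paper treats these facts as background, your supplying a full argument goes beyond what the paper itself does, but there is nothing to compare approach-wise.
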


A module $M$ is said to be \emph{$N$-Rickart} (or \emph{relatively Rickart to $N$}) if $\Ker\rho\dleq M$ for every homomorphism $\rho\in\Hom_R(M, N)$ \cite{rr2}.
\begin{thm}[{\cite[Theorem 2.6]{lrr3}}]\label{ricric}
Let $M$ and $N$ be modules. Then $M$ is $N$-Rickart if and only if for any direct summand $M'\dleq M$ and any submodule $N'\leq N$, $M'$ is $N'$-Rickart.
\end{thm}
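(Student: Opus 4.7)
The plan is to prove the two directions separately. The ``if'' direction is a direct specialization: taking $M' = M$ and $N' = N$ recovers exactly the $N$-Rickart property of $M$, so nothing substantive is required.

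For the ``only if'' direction, I would assume $M$ is $N$-Rickart and fix $M'\dleq M$ with a complement $M''$ (so $M = M' \oplus M''$) together with an arbitrary submodule $N' \leq N$. Given any $\rho' \in \Hom_R(M', N')$, the strategy is to lift $\rho'$ to a global homomorphism on $M$ and then descend back. Concretely, I would compose $\rho'$ with the projection $\pi: M \to M'$ along $M''$ and with the inclusion $\iota: N' \hookrightarrow N$ to obtain $\rho = \iota \rho' \pi \in \Hom_R(M, N)$. Because $\iota$ is injective and $\pi$ restricts to the identity on $M'$, a short computation identifies $\Ker\rho$ with $\Ker\rho' \oplus M''$, and the $N$-Rickart hypothesis on $M$ then yields $\Ker\rho' \oplus M'' \dleq M$.

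The main technical step, and the only place where real care is needed, is to pass from ``$\Ker\rho' \oplus M''$ is a summand of $M$'' to ``$\Ker\rho'$ is a summand of $M'$.'' I would handle this with the modular law. Choosing $L \leq M$ with $M = \Ker\rho' \oplus M'' \oplus L$, the inclusion $\Ker\rho' \subseteq M'$ together with the modular law gives
\[
M' \;=\; M \cap M' \;=\; \Ker\rho' + \bigl((M'' + L) \cap M'\bigr),
\]
and the directness of the internal decomposition $M = \Ker\rho' \oplus M'' \oplus L$ forces $\Ker\rho' \cap (M'' + L) = 0$, so the sum above is direct. Hence $\Ker\rho' \dleq M'$, as desired. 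Once this modular-law manipulation is in place, the rest is routine bookkeeping and no further obstacle arises; the entire argument hinges on the clean factoring $\rho = \iota\rho'\pi$ and the subsequent summand-extraction inside $M'$.
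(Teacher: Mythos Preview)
Your argument is correct. The paper does not supply its own proof of this statement: it is quoted as a preliminary result from \cite[Theorem 2.6]{lrr3}, so there is no in-paper proof to compare against. That said, the approach you outline---extending $\rho'$ by zero on a complement of $M'$, composing with the inclusion $N'\hookrightarrow N$, and then extracting $\Ker\rho'$ as a summand of $M'$ via the modular law---is the standard one and matches what the cited reference does.
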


%

In some results we will assume that $M$ has some projectivity conditions in order to get deeper results. The next lemma will be useful. 

\begin{lem}[{\cite[18.2]{wisbauerfoundations}}]\label{projprop}
The following statements hold true for a right $R$-module $M$:
\begin{enumerate}
\item[(i)] Consider $0\to N'\to N\to N''\to 0$ as a short exact sequence. If $M$ is an $N$-projective module then $M$ is $N'$- and $N''$-projective.
\item[(ii)] If $M$ is $N_i$-projective for right $R$-modules $N_1,\dots, N_\ell$, then $M$ is $\bigoplus_{i=1}^{\ell}N_i$-projective.
\end{enumerate}
\end{lem}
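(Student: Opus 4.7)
The plan is to handle the two halves of (i) by different methods---a direct argument for $N''$-projectivity and a pushout construction for $N'$-projectivity---and then to deduce (ii) by induction from the case $\ell=2$. For the $N''$-projective half of (i), given an epimorphism $q\colon N''\twoheadrightarrow X$ and a map $f\colon M\to X$, the composite $p\colon N\twoheadrightarrow N''\xrightarrow{q} X$ is epi, so $N$-projectivity of $M$ produces $\tilde f\colon M\to N$ with $p\tilde f=f$; the composite $M\xrightarrow{\tilde f} N\twoheadrightarrow N''$ is then the desired lift.

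For the $N'$-projective half, the natural device is a pushout. Given $g\colon N'\twoheadrightarrow X$ and $f\colon M\to X$, form the pushout $P$ of the inclusion $\iota\colon N'\hookrightarrow N$ along $g$, producing $\alpha\colon N\to P$ and $\beta\colon X\to P$ with $\alpha\iota=\beta g$. Standard pushout properties in the module category give that $\beta$ is mono (because $\iota$ is), $\alpha$ is epi (because $g$ is), and $\Cok(\beta)\cong\Cok(\iota)=N''$, yielding a short exact sequence $0\to X\xrightarrow{\beta} P\xrightarrow{\pi} N''\to 0$ with $\pi\alpha\colon N\to N''$ equal to the canonical projection. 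Applying $N$-projectivity of $M$ to $\alpha$ and $\beta f\colon M\to P$ produces $h\colon M\to N$ with $\alpha h=\beta f$. Then the composite $M\xrightarrow{h} N\twoheadrightarrow N''$ equals $\pi\alpha h=\pi\beta f=0$, so $h$ factors as $\iota\tilde f$ for some $\tilde f\colon M\to N'$; moreover $\beta g\tilde f=\alpha\iota\tilde f=\alpha h=\beta f$, and the injectivity of $\beta$ forces $g\tilde f=f$.

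For (ii), induct on $\ell$; the base $\ell=1$ is trivial. Given the statement for $\ell-1$, write $\bigoplus_{i=1}^\ell N_i=N_1\oplus\bigoplus_{i=2}^\ell N_i$ and apply the two-summand case to $N_1$ and $\bigoplus_{i=2}^\ell N_i$ (the latter being admissible by the inductive hypothesis). For the two-summand case, given $g\colon N_1\oplus N_2\twoheadrightarrow X$ and $f\colon M\to X$, put $X_1=g(N_1)$; the composite $N_2\hookrightarrow N_1\oplus N_2\xrightarrow{g} X\twoheadrightarrow X/X_1$ is epi, so $N_2$-projectivity lifts the induced map $M\to X/X_1$ to $h_2\colon M\to N_2$. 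The difference of $f$ and the composite $M\xrightarrow{h_2} N_2\hookrightarrow N_1\oplus N_2\xrightarrow{g} X$ then takes values in $X_1$, and since $g|_{N_1}\colon N_1\twoheadrightarrow X_1$, $N_1$-projectivity furnishes $h_1\colon M\to N_1$ lifting this difference; combining $h_1$ and $h_2$ gives a lift of $f$ to $N_1\oplus N_2$.

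The only real obstacle is the pushout argument for $N'$-projectivity: one must convert a quotient of $N'$ into a quotient of $N$ with the same cokernel $N''$, and the pushout is the clean categorical mechanism that accomplishes this. Everything else is straightforward diagram chasing together with the universal property.
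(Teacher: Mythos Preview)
Your argument is correct in all three parts: the direct lift for $N''$-projectivity, the pushout construction for $N'$-projectivity, and the inductive reduction of (ii) to two summands via the image $X_1=g(N_1)$ are all standard and go through exactly as you wrote them. Note, however, that the paper does not actually prove this lemma; it is quoted verbatim from \cite[18.2]{wisbauerfoundations} as a background fact, so there is no in-paper proof to compare against. Your proof is essentially the textbook argument one finds in Wisbauer.
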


Recall that a right $R$-module $M$  is called \emph{$\Sigma$-Rickart} if every direct sum of copies of $M$ is a Rickart module \cite{lm1}.  Also, $\Add(M)$ denotes  the class of all right $R$-modules $K$ such that $K$ is isomorphic to a direct summand of $M^{(\mathcal{I})}$ for some nonempty index set $\mathcal{I}$ (see \cite[Definition 2.9]{lm1}).

\begin{prop}[{\cite[Proposition 2.11]{lm1}}]\label{herproj}
Let $M$ be a right $R$-module such that $R\in\Add(M)$. Then $M$ is a $\Sigma$-Rickart module if and only if $M$ is a projective $R$-module and $R$ is a right hereditary ring.
\end{prop}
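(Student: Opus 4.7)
The plan is to prove both implications separately, with the backward direction being the routine one. For $(\Leftarrow)$, assume $M$ is projective and $R$ is right hereditary. Then $M^{(\mathcal{I})}$ is projective for every set $\mathcal{I}$, and for any $\varphi\in\End_R(M^{(\mathcal{I})})$ the image $\Img\varphi\leq M^{(\mathcal{I})}$ is a submodule of a projective module over a right hereditary ring, hence itself projective. Therefore the short exact sequence $0\to\Ker\varphi\to M^{(\mathcal{I})}\to\Img\varphi\to 0$ splits, giving $\Ker\varphi\dleq M^{(\mathcal{I})}$. Thus every $M^{(\mathcal{I})}$ is Rickart and $M$ is $\Sigma$-Rickart.

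For $(\Rightarrow)$, I would start from the fact that $R\in\Add(M)$ yields, up to identification, $R\dleq M^{(\mathcal{I})}$ for some set $\mathcal{I}$. Since $M$ is $\Sigma$-Rickart, $M^{(\mathcal{I}\times\mathcal{J})}\cong (M^{(\mathcal{I})})^{(\mathcal{J})}$ is Rickart for every $\mathcal{J}$, and therefore so is its direct summand $R^{(\mathcal{J})}$. To deduce that $R$ is right hereditary it suffices to show every submodule of a free $R$-module is projective. Given $N\leq R^{(\mathcal{K})}$, pick any epimorphism $\pi:R^{(\mathcal{J})}\twoheadrightarrow N$ and, writing $\iota:N\hookrightarrow R^{(\mathcal{K})}$ for the inclusion, consider the endomorphism $\varphi$ of $R^{(\mathcal{J})}\oplus R^{(\mathcal{K})}$ defined by $(a,b)\mapsto (0,\iota\pi(a))$. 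Since $R^{(\mathcal{J})}\oplus R^{(\mathcal{K})}\cong R^{(\mathcal{J}\sqcup\mathcal{K})}$ is Rickart, $\Ker\varphi=\Ker\pi\oplus R^{(\mathcal{K})}$ is a direct summand, and the complementary summand is carried by $\varphi$ isomorphically onto $\Img\varphi\cong N$. Hence $N$ is (isomorphic to) a direct summand of a free module, so $N$ is projective and $R$ is right hereditary.

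For projectivity of $M$, let $f:F\twoheadrightarrow M$ be any epimorphism from a free module $F=R^{(\mathcal{J})}$; it suffices to split $f$. From $R\dleq M^{(\mathcal{I})}$ we get $F\dleq M^{(\mathcal{I}\times\mathcal{J})}$; let $p$ be the corresponding projection and set $\tilde f=f\circ p:M^{(\mathcal{I}\times\mathcal{J})}\to M$, which is surjective. Composing $\tilde f$ with the inclusion of $M$ as a direct summand of $M^{(\mathcal{I}\times\mathcal{J})}$ produces an endomorphism of $M^{(\mathcal{I}\times\mathcal{J})}$ with kernel $\Ker\tilde f$; by the Rickart property of $M^{(\mathcal{I}\times\mathcal{J})}$, this kernel is a direct summand, whose complement $L$ is carried isomorphically onto $M$ by $\tilde f$. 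The inverse gives a section $\sigma:M\to M^{(\mathcal{I}\times\mathcal{J})}$ with $\tilde f\sigma=\mathrm{id}_M$, and then $g:=p\sigma:M\to F$ satisfies $fg=\mathrm{id}_M$, so $f$ splits.

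The main subtlety is precisely this last step: the $\Sigma$-Rickart hypothesis only controls kernels of endomorphisms of powers of $M$, whereas projectivity of $M$ requires splitting arbitrary epimorphisms onto $M$. The hypothesis $R\in\Add(M)$ is what lets us transport an arbitrary free cover $F\twoheadrightarrow M$ into a direct summand of a suitable $M^{(\mathcal{I}\times\mathcal{J})}$, thereby reducing the splitting problem to an application of the Rickart property inside that power; without $R\in\Add(M)$ one could only conclude Rickart-style information about $M$-generated modules, which is insufficient.
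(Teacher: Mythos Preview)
Your proof is correct in both directions. The paper itself does not supply a proof of this proposition (it is quoted verbatim from \cite{lm1}), so there is no in-paper argument to compare against. Your $(\Leftarrow)$ is the standard splitting argument. For $(\Rightarrow)$, the heredity step can be shortened slightly: since direct summands of $\Sigma$-Rickart modules are $\Sigma$-Rickart and $R\in\Add(M)$, the module $R_R$ is itself $\Sigma$-Rickart, which is well known to be equivalent to $R$ being right hereditary; your explicit construction with $\varphi(a,b)=(0,\iota\pi(a))$ effectively reproves this equivalence. Your projectivity argument---embedding a free cover $F\twoheadrightarrow M$ as a summand of $M^{(\mathcal{I}\times\mathcal{J})}$ via $R\in\Add(M)$, applying Rickart to split $\tilde f=fp$, and then pushing the section back down through $p$---is clean and correctly identifies why the hypothesis $R\in\Add(M)$ is essential for this direction.
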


\begin{thm}[{\cite[Theorem 4.6]{lm1}}]\label{endoringh}
The following conditions are equivalent for a finitely generated module $M$:
\begin{enumerate}
\item[(a)] $M$ is a $\Sigma$-Rickart module;
\item[(b)] $S=\End_R(M)$ is a right hereditary ring and $_SM$ is flat.
\end{enumerate}
\end{thm}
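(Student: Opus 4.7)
The central identification is $\Hom_R(M,M^{(\mathcal{I})})\cong S^{(\mathcal{I})}$, which holds because $M$ is finitely generated, together with $S^{(\mathcal{I})}\otimes_S M\cong M^{(\mathcal{I})}$. These let me shuttle direct summand data between $M^{(\mathcal{I})}$ (as an $R$-module) and $S^{(\mathcal{I})}$ (as a right $S$-module), with flatness of $_SM$ governing the passage from $S$-side splittings back to $R$-side splittings.

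For (a)$\Rightarrow$(b), fix a right ideal $I\leq S$ with generating set $\{g_\lambda\}_{\lambda\in\Lambda}$ and form $\psi:M^{(\Lambda)}\to M$, $(m_\lambda)\mapsto\sum g_\lambda(m_\lambda)$. Since $M$ is $\Sigma$-Rickart, Theorem~\ref{ricric} gives that $M^{(\Lambda)}$ is $M$-Rickart, so $\Ker\psi\dleq M^{(\Lambda)}$; write $M^{(\Lambda)}=\Ker\psi\oplus C$. Applying $\Hom_R(M,-)$ produces $S^{(\Lambda)}=\Hom_R(M,\Ker\psi)\oplus\Hom_R(M,C)$, and $\hat\psi:=\Hom_R(M,\psi)$ restricts to an isomorphism $\Hom_R(M,C)\cong I$ (since $\Img\hat\psi=\sum g_\lambda S=I$ and $\Ker\hat\psi=\Hom_R(M,\Ker\psi)$). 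Hence $I$ is a direct summand of the free right $S$-module $S^{(\Lambda)}$, so $I$ is projective and $S$ is right hereditary. For flatness, specialize to a finitely generated $I$; then $C\in\add(M)$, $\psi|_C$ identifies $C\cong IM$, and the evaluation isomorphism $\Hom_R(M,C)\otimes_S M\cong C$ together with $\Hom_R(M,C)\cong I$ realizes the canonical map $I\otimes_S M\to M$ as the composition $I\otimes_S M\cong IM\hookrightarrow M$, which is injective; thus $_SM$ is flat.

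For (b)$\Rightarrow$(a), let $\varphi\in\End_R(M^{(\mathcal{I})})$ and set $\hat\varphi:=\Hom_R(M,\varphi):S^{(\mathcal{I})}\to S^{(\mathcal{I})}$. Since $S$ is right hereditary, $\Img\hat\varphi$ (as a submodule of the free module $S^{(\mathcal{I})}$) is projective, so $0\to\Ker\hat\varphi\to S^{(\mathcal{I})}\to\Img\hat\varphi\to 0$ splits and $\Ker\hat\varphi\dleq S^{(\mathcal{I})}$. Tensoring with the flat left $S$-module $_SM$ preserves the splitting, and under $S^{(\mathcal{I})}\otimes_S M\cong M^{(\mathcal{I})}$ the map $\hat\varphi\otimes 1$ corresponds to $\varphi$, so $\Ker\varphi=\Ker\hat\varphi\otimes_S M$ is a direct summand of $M^{(\mathcal{I})}$.

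\textbf{Main obstacle.} The delicate point is the flatness half of the forward direction: converting the direct summand decomposition $M^{(\Lambda)}=\Ker\psi\oplus C$ into injectivity of the canonical $I\otimes_S M\to M$. The argument hinges on identifying $I$ with $\Hom_R(M,C)$ via $\hat\psi$ and then invoking the evaluation isomorphism for $C\in\add(M)$; both steps crucially use the finite generation of $M$, and the reduction to finitely generated $I$ is what makes $C\in\add(M)$ available.
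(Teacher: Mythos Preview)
Your argument is correct. Note that the paper itself does not prove this statement---it is quoted from \cite{lm1}---but the paper does prove the parallel finite version, Theorem~\ref{endoringsh}, and it is instructive to compare.

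Your approach is a uniform Hom--tensor transfer: you pass between $M^{(\mathcal{I})}$ and $S^{(\mathcal{I})}$ via $\Hom_R(M,-)$ (using finite generation of $M$) and come back via $-\otimes_S M$ (using flatness), so that splittings on one side correspond to splittings on the other. The paper's route in Theorem~\ref{endoringsh} is more piecemeal: for (a)$\Rightarrow$(b) it invokes Small's characterization ($\Mat_n(S)$ right Rickart for all $n$ $\Leftrightarrow$ $S$ right semi-hereditary) together with Wisbauer's flatness criterion (Lemma~\ref{flat}), and for the converse it shows $\Ker\varphi$ is $M$-generated, rewrites it as $\mathbf{r}_{\Mat_n(S)}(\varphi)M^{(n)}$, and applies Lemma~\ref{srick}. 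Your method is more self-contained and makes the role of finite generation of $M$ completely transparent (it is exactly what gives $\Hom_R(M,M^{(\mathcal{I})})\cong S^{(\mathcal{I})}$ for infinite $\mathcal{I}$), whereas the paper's method leverages off-the-shelf results but obscures where finiteness enters. Incidentally, your flatness argument can be shortened: once $\Ker\psi\dleq M^{(n)}$ it is $M$-generated, and Lemma~\ref{flat} finishes immediately; your explicit verification that the composite realizes the canonical map $I\otimes_S M\to M$ is correct but not strictly needed.
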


\vspace{0.2cm}


\section{Finite $\Sigma$-Rickart Modules}
In this section, after we introduce $\Sigma$-Rickart modules in 2020 \cite{lm1}, we present another natural generalized notion which is called finite $\Sigma$-Rickart modules and obtain some of its basic properties. Note that, since proofs of some results are similar to those in \cite{lm1}, we will omit or include proofs for the convenience of the reader.

\begin{defn}
A right $R$-module $M$ is called \emph{finite $\Sigma$-Rickart} if every finite direct sum of copies of $M$ is a Rickart module.
\end{defn}

\begin{exam}
(i) $R_R$ is a finite $\Sigma$-Rickart module iff $R$ is a right semi-hereditary ring. 

(ii) Any $\mathcal{K}$-nonsingular continuous module is  finite $\Sigma$-Rickart.

(iii) Every $\Sigma$-Rickart module and every endoregular module are finite $\Sigma$-Rickart.

(iv) Any submodule of $\mathbb{Q}_\mathbb{Z}$ is a finite $\Sigma$-Rickart module. For, let $N$ be any submodule of $\mathbb{Q}_\mathbb{Z}$ and $\varphi:N^{(n)}\to N^{(n)}$ be any endomorphism for any $0<n\in\mathbb{N}$. Then $\Img\varphi$ is a torsion-free group. Hence $\Ker\varphi$ is a pure subgroup of $N^{(n)}$ by \cite[Ch.V, 26(d)]{fl}. Therefore $\Ker\varphi\dleq N^{(n)}$ by \cite[Lemma 86.8]{fl2}. Thus $N$ is a finite $\Sigma$-Rickart module.

(v) Let $R$ be a Dedekind domain and $M$ be a direct sum of finitely generated torsion-free $R$-modules of rank one. Then every submodule of $M$ is a finite $\Sigma$-Rickart module (\cite[Theorems 3 and 4]{k}). 

(vi) Every finitely generated free (projective) module over a right semi-hereditary ring is a finite $\Sigma$-Rickart module.

(vii) When $M=\bigoplus_{i\in\mathcal{I}} M_i$ with $M_i \unlhd M$ for all $i\in\mathcal{I}$, $\bigoplus_{i\in\mathcal{I}} M_i$ is a finite $\Sigma$-Rickart module if and only if $M_i$ is a finite $\Sigma$-Rickart module for all $i\in\mathcal{I}$.
\end{exam}

We have the implications for right $R$-modules:
\begin{equation}\label{implications}
\begin{matrix}&&\textsc{$\Sigma$-Rickart}&&\\ &&\Downarrow&&\\ \textsc{Endoregular}&\Longrightarrow&\textsc{finite $\Sigma$-Rickart}&\Longrightarrow&\textsc{Rickart}\end{matrix}
\end{equation}

The next examples show that each converse of the above implications is not true, in general.

\begin{exam} (i) $\mathbb{Z}[x]_{\mathbb{Z}[x]}$ is Rickart but it is not finite $\Sigma$-Rickart.\\
(ii) The localization of integers at a prime $p$, $\mathbb{Z}_{(p)}=\{\frac{a}{b}\,|\,a, b\in\mathbb{Z}, p\nmid b\}$,  is a finite $\Sigma$-Rickart $\mathbb{Z}$-module which is not endoregular.\\
(iii) If a module has $C_2$ condition then the three concepts in the low part of (\ref{implications}) coincide by Proposition \ref{ricd2}(ii). \\
(iv) Consider the $\mathbb{Z}$-module $M=\mathbb{Q}\oplus\mathbb{Z}$. Since $\mathbb{Z}^{(n)}$ is a nonsingular extending module for any $n\in\mathbb{N}$ and $E(\mathbb{Z}^{(n)})=\mathbb{Q}^{(n)}$, from \cite[Theorem 2.16]{lrr3} $M^{(n)}$ is a Rickart module for any $n\in\mathbb{N}$. Thus, $M$ is a finite $\Sigma$-Rickart module. However, $M$ is not a $\Sigma$-Rickart module. For, assume that $M$ is a $\Sigma$-Rickart module. Since $\mathbb{Z}\leq^\oplus M$, by Proposition \ref{herproj} $M$ is a projective module, a contradiction. 
\end{exam}

\begin{lem}\label{summandher}
\begin{enumerate}
\item [(i)] Every direct summand of a finite $\Sigma$-Rickart module is finite $\Sigma$-Rickart.
\item [(ii)] Every finite direct sum of copies of a finite $\Sigma$-Rickart module is finite $\Sigma$-Rickart.
\end{enumerate}
\end{lem}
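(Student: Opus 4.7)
The plan for both parts is to reduce everything to the definition together with Theorem \ref{ricric}, which tells us that the Rickart property passes to direct summands.

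For part (i), suppose $M$ is finite $\Sigma$-Rickart and $N\dleq M$. The first step is to observe that taking a direct summand is compatible with forming finite direct sums, so $N^{(n)}\dleq M^{(n)}$ for every $n>0$. By hypothesis $M^{(n)}$ is a Rickart module, so the second step is to invoke the direct summand closure of the Rickart property: applying Theorem \ref{ricric} with both modules equal to $M^{(n)}$ (so that $M^{(n)}$ is $M^{(n)}$-Rickart), taking $M' = N^{(n)}$ as a direct summand and $N' = N^{(n)}$ as a submodule, yields that $N^{(n)}$ is $N^{(n)}$-Rickart, i.e., a Rickart module. Since this works for every $n$, $N$ is finite $\Sigma$-Rickart.

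For part (ii), I would just unwind the definition: given $k>0$, set $M'=M^{(k)}$ and note that for any $n>0$ there is a natural isomorphism $(M')^{(n)} \cong M^{(kn)}$. The right hand side is Rickart since $M$ is finite $\Sigma$-Rickart, so $(M')^{(n)}$ is Rickart for every $n$, hence $M' = M^{(k)}$ is finite $\Sigma$-Rickart.

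I do not anticipate any substantive obstacle; both items are formal consequences of the definition. The only place where outside input enters is the use of Theorem \ref{ricric} in (i) to ensure that a direct summand of a Rickart module is again Rickart, and this is already recorded in the excerpt.
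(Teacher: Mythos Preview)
Your proposal is correct and follows essentially the same route as the paper: for (i) you use $N^{(n)}\dleq M^{(n)}$ together with the closure of the Rickart property under direct summands (which the paper states without citation, while you justify it via Theorem~\ref{ricric}), and for (ii) you use the isomorphism $(M^{(k)})^{(n)}\cong M^{(kn)}$ exactly as the paper does.
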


\begin{proof}
(i) Let $M$ be a finite $\Sigma$-Rickart module and $N$ be a direct summand of $M$. Then $N^{(n)}$ is a direct summand of $M^{(n)}$ for all $0<n\in\mathbb{N}$. Since $M^{(n)}$ is a Rickart module, so is $N^{(n)}$. Thus, $N$ is a finite $\Sigma$-Rickart module.

(ii) Let $M$ be a finite $\Sigma$-Rickart module. Consider $M^{(n)}$ a direct sum of copies of $M$ for any $n\in\mathbb{N}$. 
Then $(M^{(n)})^{(m)}=M^{(nm)}$ is a Rickart module for all $n, m\in\mathbb{N}$. Therefore $M^{(n)}$ is a finite $\Sigma$-Rickart module.
\end{proof}

\begin{defn}
Let $M$ be a right $R$-module. Denote by $\add(M)$ the class of all right $R$-modules $K$ such that $K$ is isomorphic to a direct summand of $M^{(n)}$ for some $0<n\in\mathbb{N}$.
Note that $\add(R)$ consists of all finitely generated projective right modules over a ring $R$. 
\end{defn}

\begin{rem}
If $M$ is a right $R$-module such that $R\in\add(M)$, then $M$ is a projective left $S$-module where $S=\End_R(M)$.
For, since $R$ is in $\add(M)$, $M^{(n)}\cong R\oplus N$ for some right $R$-module $N$ and some $n\in\mathbb{N}$. Applying the functor $\Hom_R(\_,M)$ we get
$S^{(n)}\cong\Hom_R(R,M)\oplus\Hom_R(N,M)\cong M\oplus\Hom_R(N,M)$
as left $S$-modules. Thus, $_SM$ is projective. 
In addition, for the case of $\Add(M)$, if $M_R$ is finitely generated such that $R\in\Add(M)$, then $_SM$ is projective.
\end{rem}

The next proposition generalizes Lemma \ref{summandher}(ii).

\begin{prop}\label{ricksub}
A module $M$ is finite $\Sigma$-Rickart if and only if every element in $\add(M)$ is a finite $\Sigma$-Rickart module.
\end{prop}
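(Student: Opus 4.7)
The proof is essentially immediate from Lemma \ref{summandher}, so the plan is short and mechanical.

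For the ($\Leftarrow$) direction, I would observe that $M$ itself lies in $\add(M)$: indeed $M$ is (trivially) isomorphic to a direct summand of $M^{(1)}=M$. Hence if every element of $\add(M)$ is finite $\Sigma$-Rickart, then in particular so is $M$.

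For the nontrivial direction ($\Rightarrow$), suppose $M$ is finite $\Sigma$-Rickart and let $K\in\add(M)$. By definition there exists some $0<n\in\mathbb{N}$ such that $K$ is isomorphic to a direct summand of $M^{(n)}$. The plan is to apply Lemma \ref{summandher} twice: first, part (ii) of that lemma gives that $M^{(n)}$ is itself a finite $\Sigma$-Rickart module; then part (i) of the same lemma, together with the fact that being finite $\Sigma$-Rickart is an isomorphism invariant (which follows because the Rickart property transfers across isomorphisms of the ambient module), shows that $K$ is finite $\Sigma$-Rickart. Concluding that $\add(M)$ consists of finite $\Sigma$-Rickart modules.

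There is no real obstacle here; the only point worth stating carefully is the isomorphism invariance, and this is absorbed into the definition of $\add(M)$ (which is closed under isomorphism). So the writeup is essentially two sentences invoking Lemma \ref{summandher}(i) and (ii) in sequence.
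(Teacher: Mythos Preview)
Your proposal is correct and matches the paper's intent: the paper actually omits the proof of this proposition, remarking only that it ``generalizes Lemma \ref{summandher}(ii),'' which indicates precisely the two-step argument via Lemma \ref{summandher}(i) and (ii) that you outline. There is nothing to add.
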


 Recall that a module $N$ is said to be \emph{finitely} \emph{$M$-generated} if there exists an epimorphism  $M^{(n)}\to N$ for some $0<n\in\mathbb{N}$. 
\begin{lem}\label{relrick}
For a finite $\Sigma$-Rickart module $M$, the following statements hold true:
\begin{enumerate}
\item[(i)] $M^{(m)}$ is  $M^{(n)}$-Rickart for every $0<m,n\in\mathbb{N}$.
\item[(ii)] For given $K\in\add(M)$, the intersection of two finitely $M$-generated submodules of $K$ is finitely $M$-generated.
\item[(iii)] The intersection of two finitely $M$-generated submodules of $M$ is finitely $M$-generated.
\end{enumerate}
\end{lem}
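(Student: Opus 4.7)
The plan is to establish (i) directly from the finite $\Sigma$-Rickart hypothesis combined with Theorem \ref{ricric}, then leverage (i) in (ii) by realizing the intersection of two finitely $M$-generated submodules as the image of a suitable kernel, and finally obtain (iii) as the special case $K=M$ of (ii).

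For (i), since $M$ is finite $\Sigma$-Rickart, the module $M^{(m+n)}$ is Rickart, i.e.\ $M^{(m+n)}$ is $M^{(m+n)}$-Rickart. Now $M^{(m)}\dleq M^{(m+n)}$ (a direct summand) and $M^{(n)}\leq M^{(m+n)}$ (a submodule), so Theorem \ref{ricric} yields at once that $M^{(m)}$ is $M^{(n)}$-Rickart.

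For (ii), fix $K\in\add(M)$, chosen so that $K\dleq M^{(n)}$ for some $n>0$, and let $A,B\leq K$ be finitely $M$-generated via epimorphisms $f\colon M^{(p)}\to A$ and $g\colon M^{(q)}\to B$. The key construction is the map $\psi\colon M^{(p+q)}\to K$ given by $\psi(x,y)=f(x)-g(y)$. A routine verification shows
\[
A\cap B \;=\; f\bigl(\pi_1(\Ker\psi)\bigr),
\]
where $\pi_1\colon M^{(p+q)}\to M^{(p)}$ is the first projection, so $A\cap B$ is a homomorphic image of $\Ker\psi$. By (i), $M^{(p+q)}$ is $M^{(n)}$-Rickart; since $K\leq M^{(n)}$, Theorem \ref{ricric} tells us $M^{(p+q)}$ is also $K$-Rickart, and hence $\Ker\psi\dleq M^{(p+q)}$. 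Being a direct summand of a finite direct sum of copies of $M$, $\Ker\psi$ lies in $\add(M)$ and is in particular finitely $M$-generated. It follows that $A\cap B$, as a homomorphic image of $\Ker\psi$, is finitely $M$-generated.

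Finally, (iii) is immediate from (ii) applied to $K=M\in\add(M)$. The only subtle point I anticipate is the verification of the pullback-style identity $A\cap B=f(\pi_1(\Ker\psi))$; this is where one must argue both inclusions using $A=\Img f$ and $B=\Img g$. Everything else is a direct application of (i) together with the transfer properties of the relative Rickart condition encoded in Theorem \ref{ricric}.
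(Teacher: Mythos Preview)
Your proposal is correct and follows essentially the same approach as the paper. The paper refers part (ii) to the analogous result for $\Sigma$-Rickart modules in \cite{lm1}, which uses exactly this pullback-style construction (realizing $A\cap B$ as an image of $\Ker\psi$ for $\psi(x,y)=f(x)-g(y)$ and invoking the relative Rickart property to get $\Ker\psi\dleq M^{(p+q)}$); parts (i) and (iii) match the paper's reasoning directly.
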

\begin{proof} (i) It directly follows from Theorem \ref{ricric}. 
(ii)  The proof is similar to that of \cite[Lemma 2.13]{lm1}. (iii) It is the special case of (ii) (see also Theorems \ref{coh3} and \ref{endoringsh}).
\end{proof}

\begin{cor}[e.g., {\cite[Corollary 4.60]{l}}] For a right semi-hereditary ring $R$, the intersection of two finitely generated ideals of $R$ is finitely generated.
\end{cor}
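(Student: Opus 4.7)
The plan is to recognize this corollary as the special case $M=R_R$ of Lemma \ref{relrick}(iii), and to spell out why the two relevant translations of terms go through.

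First, by Example 2.2(i) (the one immediately after the definition of finite $\Sigma$-Rickart), the hypothesis that $R$ is right semi-hereditary is equivalent to saying that the right $R$-module $M:=R_R$ is finite $\Sigma$-Rickart. So Lemma \ref{relrick}(iii) applies to $M=R_R$.

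Next, I would verify the dictionary between the module-theoretic and ring-theoretic vocabulary. A submodule of $R_R$ is exactly a right ideal of $R$. A submodule $I\leq R_R$ is \emph{finitely $R$-generated} in the sense of the paper precisely when there is an epimorphism $R^{(n)}\to I$ for some $n>0$, which is just the usual notion of a finitely generated right ideal. Thus, with $M=R_R$, the conclusion of Lemma \ref{relrick}(iii) reads: the intersection of two finitely generated right ideals of $R$ is a finitely generated right ideal, which is exactly what we want.

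There is no real obstacle here; the corollary is entirely a matter of specializing the lemma and translating terminology. The only point worth noting is the input from Example 2.2(i), which identifies right semi-hereditary rings with finite $\Sigma$-Rickart modules in the case $M=R_R$; once that identification is in place, the result is immediate from Lemma \ref{relrick}(iii).
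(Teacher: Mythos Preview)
Your proposal is correct and matches the paper's own proof, which simply notes that the result follows directly from Lemma \ref{relrick}(iii) applied to $M=R_R$. The identification via Example 2.2(i) and the translation of ``finitely $R$-generated submodule of $R_R$'' as ``finitely generated right ideal'' are exactly the points needed.
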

\begin{proof} It directly follows from Lemma \ref{relrick}(iii) (see also Corollary \ref{iprick} and Lemma \ref{Chase}).
\end{proof}

\begin{thm}\label{2}
 The following conditions are equivalent for a module $M$:
\begin{enumerate}
\item[(a)] $M$ is a finite $\Sigma$-Rickart module;
\item[(b)] every $K\in\add(M)$ satisfies the following two statements: 
\begin{enumerate}
\item[(1)] any finitely $M$-generated submodule of $K$ is in $\add(M)$; and
\item[(2)] any epimorphism $N\to K$ with $N$ finitely $M$-generated splits.
\end{enumerate}
\end{enumerate}
\end{thm}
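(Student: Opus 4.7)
The plan is to prove (a)$\Leftrightarrow$(b) as a round trip, using Proposition \ref{ricd2}(i) and Lemma \ref{relrick}(i) as the essential tools.

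For (a)$\Rightarrow$(b), fix $K\in\add(M)$ and write $M^{(m)}=K\oplus K'$ for some $m>0$ and some $K'$. To get (1), take a finitely $M$-generated submodule $L\leq K$, fix an epimorphism $\varphi\colon M^{(n)}\to L$, and let $\iota\colon L\hookrightarrow K\hookrightarrow M^{(m)}$ denote the inclusion. I would define $\tilde\varphi\in\End_R(M^{(n+m)})$ by $\tilde\varphi(x,y)=(0,\iota\varphi(x))$; its image is $0\oplus L\cong L$. Since $M^{(n+m)}$ is Rickart, Proposition \ref{ricd2}(i) says $\Img\tilde\varphi$ is isomorphic to a direct summand of $M^{(n+m)}$, which places $L$ in $\add(M)$.

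For (2), let $p\colon N\to K$ be an epimorphism with $N$ finitely $M$-generated, pick an epimorphism $\sigma\colon M^{(k)}\to N$, and set $q=p\sigma$, so that $\iota q\colon M^{(k)}\to M^{(m)}$ has kernel exactly $\ker q$. The key point is Lemma \ref{relrick}(i): $M^{(k)}$ is $M^{(m)}$-Rickart, so $\ker q\leq^{\oplus} M^{(k)}$. Writing $M^{(k)}=\ker q\oplus L$, the restriction $q|_{L}\colon L\to K$ is an isomorphism, and $s=\sigma\circ\iota_{L}\circ(q|_{L})^{-1}\colon K\to N$ satisfies $ps=\mathrm{id}_{K}$, splitting $p$.

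For (b)$\Rightarrow$(a), fix $n>0$ and $\varphi\in\End_R(M^{(n)})$. Since $M^{(n)}\in\add(M)$ and $\Img\varphi$ is a finitely $M$-generated submodule of $M^{(n)}$, applying (1) with $K=M^{(n)}$ gives $\Img\varphi\in\add(M)$. Then (2) applied to the epimorphism $\varphi\colon M^{(n)}\to\Img\varphi$ (with $N=M^{(n)}$ finitely $M$-generated) produces a splitting, so $\ker\varphi\leq^{\oplus} M^{(n)}$. Hence every $M^{(n)}$ is Rickart and $M$ is finite $\Sigma$-Rickart.

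The main obstacle is the splitting statement in (2); the natural impulse is to invoke the $D_{2}$ condition on $N$ (which is not directly available), so the right move is to lift to $M^{(k)}$ and use relative Rickart-ness between $M^{(k)}$ and $M^{(m)}$ rather than trying to argue inside $N$ or $K$ alone. Once that step is in hand, the remaining arguments are routine diagram chases built from Proposition \ref{ricd2}(i).
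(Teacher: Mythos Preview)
Your proof is correct and complete. The paper itself does not supply an argument here---it defers to the analogous result for $\Sigma$-Rickart modules, \cite[Theorem 2.12]{lm1}---but your approach (using Proposition~\ref{ricd2}(i) for part (b1) and the relative Rickart property of Lemma~\ref{relrick}(i) lifted along an epimorphism $M^{(k)}\to N$ for part (b2)) is exactly the natural adaptation of that proof to the finite setting.
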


\begin{proof} The proof is similar to that of \cite[Theorem 2.12]{lm1}.
\end{proof}


The following examples show that Conditions (b1) and (b2) of Theorem \ref{2} are independent. 

\begin{exam}\label{excond}
(i) Let $M_\mathbb{Z}=\mathbb{Z}_{p^\infty}$ for a prime $p\in\mathbb{Z}$ and let $K\in\add(M)$ be arbitrary. 
Consider $P$ as a finitely $M$-generated submodule of $K$. Then there exists an epimorphism $\rho:M^{(n)}\to P$ for some $n>0$. Since $M$ is divisible, so is $M^{(n)}$. It is a fact that epimorphic images of divisible groups are divisible, hence $P$ is divisible. This implies that $P\dleq K$.
Thus $P\in\add(M)$. Therefore $M=\mathbb{Z}_{p^\infty}$ satisfies  Theorem \ref{2}(b1). 

Now, consider the epimorphism $\varphi:M\to M$ given by $\varphi(a)=ap$. Since $\varphi$ is not a monomorphism and $M$ is uniform, $\varphi$ does not split. Thus $M$ does not satisfy Theorem \ref{2}(b2).  Note that $M$ is not finite $\Sigma$-Rickart because $M$ is not a Rickart $\mathbb{Z}$-module. 
\vspace{.1cm}

(ii) Consider the ring
\[R=\left\{\left(\begin{smallmatrix}
a & (x,y) \\
0 & a
\end{smallmatrix}\right)\bigm| a\in \mathbb{Z}_2, (x,y)\in\mathbb{Z}_2\oplus\mathbb{Z}_2\right\}\]
with the usual addition and multiplication of matrices. Then $R$ is a commutative local artinian ring with maximal ideal
$I=\left\{\left(\begin{smallmatrix}
0 & (x,y)\\
0 & 0
\end{smallmatrix}\right)\bigm| (x,y)\in\mathbb{Z}_2\oplus\mathbb{Z}_2\right\}.$\\
Let $M$ be a finitely generated free $R$-module. Then $M$ satisfies Theorem \ref{2}(b2) because every element in $\add(M)$ is projective. However, let $N$ be a simple submodule of $M$. 
Since $M$ is a free module, $N$ is finitely $M$-generated. Since $R$ is local, $\mathbf{r}_R(N)\ess R$. Thus $N$ is a singular simple right $R$-module. 
Hence $N$ is not projective, that is, $N$ is not in $\add(M)$. Therefore $M$ does not satisfy Theorem \ref{2}(b1). 
Note that $R$ is not a Rickart $R$-module because $\Ker\left(\begin{smallmatrix}
0 & (1,1)\\
0 & 0
\end{smallmatrix}\right)\leq^\text{ess}R_R$, hence
 $M$ is not finite $\Sigma$-Rickart. 
\end{exam}

\begin{thm}
If $M$ is a finite $\Sigma$-Rickart module then every finitely $M$-generated submodule $P$ of any element in $\add(M)$ is isomorphic to a direct sum of finitely $M$-generated submodules of $M$. 
\end{thm}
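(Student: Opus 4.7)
The plan is to embed $P$ into some $M^{(n)}$ and argue by induction on $n$, peeling off one copy of $M$ at a time. Since $K \in \add(M)$, there exists $n > 0$ and an isomorphism carrying $K$ onto a direct summand of $M^{(n)}$; under this isomorphism $P$ is identified with a finitely $M$-generated submodule of $M^{(n)}$. It therefore suffices to prove the following: for every $n \geq 1$, every finitely $M$-generated submodule $P$ of $M^{(n)}$ is isomorphic to a direct sum of finitely $M$-generated submodules of $M$.

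The base case $n = 1$ is immediate since $P \leq M$ is itself such a submodule. For the inductive step, write $M^{(n)} = M \oplus M^{(n-1)}$ and let $\pi \colon M^{(n)} \to M^{(n-1)}$ be the canonical projection. Set $L = \Ker(\pi|_P) = P \cap M$ and $Q = \Img(\pi|_P) \leq M^{(n-1)}$, giving the short exact sequence
\begin{equation*}
0 \longrightarrow L \longrightarrow P \longrightarrow Q \longrightarrow 0.
\end{equation*}
Since $\pi|_P$ is an epimorphism from the finitely $M$-generated module $P$, the module $Q$ is also finitely $M$-generated; moreover $Q$ is a finitely $M$-generated submodule of $M^{(n-1)} \in \add(M)$, so Theorem \ref{2}(b1) gives $Q \in \add(M)$. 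Now apply Theorem \ref{2}(b2) to the epimorphism $\pi|_P \colon P \to Q$: since $P$ is finitely $M$-generated and $Q \in \add(M)$, the sequence splits, yielding $P \cong L \oplus Q$.

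It remains to check that $L$ qualifies as a \emph{finitely $M$-generated} submodule of $M$. Clearly $L \leq M$. The splitting identifies $L$ with a direct summand of $P$, so composing a chosen epimorphism $M^{(k)} \twoheadrightarrow P$ with the projection $P \twoheadrightarrow L$ gives an epimorphism $M^{(k)} \twoheadrightarrow L$; hence $L$ is finitely $M$-generated. Applying the induction hypothesis to $Q \leq M^{(n-1)}$ gives $Q \cong L_1 \oplus \cdots \oplus L_r$ with each $L_j$ a finitely $M$-generated submodule of $M$, and so $P \cong L \oplus L_1 \oplus \cdots \oplus L_r$ has the required form.

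The main subtlety, rather than any serious obstacle, is bookkeeping around the hypotheses of Theorem \ref{2}: one must verify that at each step the relevant image still lies in $\add(M)$ (so that part (b1) applies) and that the domain of the epimorphism remains finitely $M$-generated (so that (b2) delivers the splitting). Once these memberships are in place, the induction runs cleanly and no further use of the finite $\Sigma$-Rickart property is needed.
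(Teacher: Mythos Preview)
Your proof is correct and follows essentially the same approach as the paper (which refers to the analogous \cite[Theorem 2.14]{lm1}): reduce to $P\leq M^{(n)}$, induct on $n$, project onto $M^{(n-1)}$, and use Theorem~\ref{2}(b1)--(b2) to split off the image $Q\in\add(M)$ so that $P\cong (P\cap M)\oplus Q$. Your bookkeeping verifying that $Q\in\add(M)$ and that $L=P\cap M$ is finitely $M$-generated is exactly what is needed.
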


\begin{proof} The proof is similar to that of \cite[Theorem 2.14]{lm1}.
\end{proof}

%

\begin{thm}\label{semihered2}
The following conditions are equivalent for a module $M$:
\begin{enumerate}
\item[(a)] $M$ is a finite $\Sigma$-Rickart module;
\item[(b)] every finitely $M$-generated submodule of any element in $\add(M)$ has $D_2$ condition.
\end{enumerate}
\end{thm}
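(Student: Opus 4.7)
The plan addresses each direction separately; the forward implication is essentially a bookkeeping exercise with earlier results, while the reverse requires a careful construction.

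For (a) $\Rightarrow$ (b), suppose $M$ is finite $\Sigma$-Rickart, and let $P$ be a finitely $M$-generated submodule of some $K \in \add(M)$. By Theorem \ref{2}(b1) applied to $M$, $P$ lies in $\add(M)$, so $P$ is isomorphic to a direct summand of some $M^{(m)}$. Since $M^{(m)}$ is Rickart and the Rickart property is inherited by direct summands (via Theorem \ref{ricric}), $P$ is Rickart, and Proposition \ref{ricd2}(i) then gives that $P$ has $D_2$ condition.

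For (b) $\Rightarrow$ (a), fix $n>0$ and I plan to invoke Proposition \ref{ricd2}(i) to conclude that $M^{(n)}$ is Rickart by verifying both conditions stated there. The $D_2$ condition on $M^{(n)}$ is immediate from (b), since $M^{(n)} \in \add(M)$ is a finitely $M$-generated submodule of itself. For the image condition, take $\varphi \in \End_R(M^{(n)})$ and set $P := \Img\varphi$; by (b) applied to $P$ as a finitely $M$-generated submodule of $M^{(n)} \in \add(M)$, $P$ has $D_2$.

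To produce the isomorphism of $P$ with a direct summand of $M^{(n)}$, I would work inside $M^{(n)} \oplus M^{(n)} = M^{(2n)} \in \add(M)$ and consider the endomorphism $\Psi(x,y) = (0, y - \varphi(x))$. Its image is $0 \oplus M^{(n)}$, a genuine direct summand of $M^{(2n)}$, and its kernel is the graph $\{(x,\varphi(x)) : x \in M^{(n)}\}$ of $\varphi$. Since $M^{(2n)}$ has $D_2$ by (b), applying $D_2$ to $\Psi$ yields that the graph of $\varphi$ is a direct summand of $M^{(2n)}$, producing a decomposition $M^{(2n)} = \operatorname{graph}(\varphi) \oplus C$. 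Untangling this decomposition against the canonical one $M^{(2n)} = M^{(n)} \oplus M^{(n)}$, and using the $D_2$ condition on $P$ itself to adjust the complement $C$, one extracts a section of the corestriction $\bar\varphi : M^{(n)} \to P$. This simultaneously gives $\Ker\varphi \leq^\oplus M^{(n)}$ (showing $M^{(n)}$ is Rickart) and identifies $P$ with a direct summand of $M^{(n)}$.

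The main obstacle is the last step: the decomposition $M^{(2n)} = \operatorname{graph}(\varphi) \oplus C$ does not by itself yield a section $P \to M^{(n)}$ of $\bar\varphi$, because the projection of $C$ onto the second copy of $M^{(n)}$ need not factor through $P$. The $D_2$ condition on $P$ must be invoked, via a suitable auxiliary endomorphism built from the inclusion $P \hookrightarrow M^{(n)}$ and the complement $C$, to correct for this. The argument is structurally parallel to the manipulations used in the proofs of Theorem \ref{2} and Lemma \ref{relrick}, with the $D_2$ condition assuming the role previously played by the Rickart property.
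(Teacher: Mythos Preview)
Your forward direction (a)$\Rightarrow$(b) is fine. The reverse direction has a genuine gap: the endomorphism $\Psi(x,y)=(0,y-\varphi(x))$ of $M^{(2n)}$ is not a useful choice, because its kernel, the graph of $\varphi$, is \emph{always} a direct summand of $M^{(n)}\oplus M^{(n)}$ --- one has the trivial decomposition $M^{(n)}\oplus M^{(n)}=\operatorname{graph}(\varphi)\oplus(0\oplus M^{(n)})$ via $(x,y)=(x,\varphi(x))+(0,y-\varphi(x))$. So invoking $D_2$ on $M^{(2n)}$ for $\Psi$ yields no information whatsoever, and your proposed correction ``using $D_2$ on $P$ to adjust the complement $C$'' cannot recover anything from a vacuous step.

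A working argument stays closer to $P$ itself. Set $L:=M^{(n)}\oplus P$, viewed as a finitely $M$-generated submodule of $M^{(2n)}\in\add(M)$; by (b), $L$ has $D_2$. Now take the endomorphism $\Phi(x,p)=(0,\bar\varphi(x))$ of $L$. Here $\Img\Phi=0\oplus P\leq^\oplus L$ and $\Ker\Phi=\Ker\varphi\oplus P$, so $D_2$ on $L$ gives a decomposition $L=(\Ker\varphi\oplus P)\oplus D$. Projecting to the first coordinate, one checks directly that $p_1|_D$ is injective (since $D\cap(0\oplus P)\subseteq D\cap(\Ker\varphi\oplus P)=0$), that $p_1(D)+\Ker\varphi=M^{(n)}$, and that $p_1(D)\cap\Ker\varphi=0$. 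Hence $\Ker\varphi\leq^\oplus M^{(n)}$, and $M^{(n)}$ is Rickart. The essential difference from your attempt is that $\Ker\Phi=\Ker\varphi\oplus P$ is \emph{not} automatically a summand of $L$, so $D_2$ does real work here.
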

\begin{proof} The proof is similar to that of \cite[Theorem 2.17]{lm1}.
%
\end{proof}

\begin{cor}\label{5}
The following conditions are equivalent for a module $M$:
\begin{itemize}
\item[(a)] $M$ is a quasi-projective finite $\Sigma$-Rickart module;
\item[(b)] every finitely $M$-generated submodule of any element in $\add(M)$ is $M$-projective;
\item[(c)] every finitely $M$-generated submodule of any element in $\add(M)$ is quasi-projective.
\end{itemize} 
\end{cor}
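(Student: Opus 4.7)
The plan is to prove the equivalences cyclically, $(a) \Rightarrow (b) \Rightarrow (c) \Rightarrow (a)$, leaning on Theorem \ref{2}(b1), Lemma \ref{projprop}, Theorem \ref{semihered2}, and the standard fact that quasi-projective modules satisfy the $D_2$ condition.

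For $(a) \Rightarrow (b)$, I would fix a finitely $M$-generated submodule $P$ of some $K \in \add(M)$. Because $M$ is finite $\Sigma$-Rickart, Theorem \ref{2}(b1) forces $P \in \add(M)$, so $P \dleq M^{(n)}$ for some $n$. Since $M$ is quasi-projective, a coordinate-by-coordinate lifting argument shows that $M^{(n)}$ is $M$-projective, and then its direct summand $P$ inherits $M$-projectivity (given any epi $f\colon M \to N$ and any $g\colon P \to N$, compose $g$ with the projection $M^{(n)} \to P$, lift through $f$, and restrict the lift to $P$).

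For $(b) \Rightarrow (c)$, let $P$ again be a finitely $M$-generated submodule of some $K \in \add(M)$. By (b), $P$ is $M$-projective. Pick any epimorphism $\pi\colon M^{(n)} \to P$. Lemma \ref{projprop}(ii), applied with $P$ in the role of the source module and each $N_i = M$, gives that $P$ is $M^{(n)}$-projective. Then Lemma \ref{projprop}(i) applied to the short exact sequence $0 \to \ker\pi \to M^{(n)} \to P \to 0$ yields that $P$ is $P$-projective, i.e.\ quasi-projective. For $(c) \Rightarrow (a)$, I first note that $M$ itself is a finitely $M$-generated submodule of $M \in \add(M)$, so (c) immediately gives that $M$ is quasi-projective. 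Then I invoke Theorem \ref{semihered2}: it suffices to show that every finitely $M$-generated submodule $P$ of any $K \in \add(M)$ satisfies $D_2$, and this is the standard consequence of $P$ being quasi-projective (use quasi-projectivity of $P$ to lift the composition $P \twoheadrightarrow P/N \cong P' \hookrightarrow P$ through the quotient map $P \twoheadrightarrow P/N$, producing a section and hence a direct complement to $N$).

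The main thing to watch is a small bookkeeping point: Lemma \ref{projprop} is stated with the first (source) variable fixed, whereas in $(a) \Rightarrow (b)$ I need the dual facts that finite direct sums and direct summands of $M$-projective modules are themselves $M$-projective. Both are routine, but they are not literally what Lemma \ref{projprop} records, so they must be verified by hand as above. Beyond that, the proof is a clean reduction to the tools already built up earlier in the section.
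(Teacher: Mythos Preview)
Your proof is correct and follows essentially the same cyclic route as the paper: $(a)\Rightarrow(b)$ via Theorem~\ref{2}(b1) and $M$-projectivity of $M^{(n)}$, $(b)\Rightarrow(c)$ via Lemma~\ref{projprop}, and $(c)\Rightarrow(a)$ via Theorem~\ref{semihered2}. You supply more detail than the paper's terse treatment---in particular, you make explicit that $M$ itself falls under hypothesis (c), and you correctly flag that the closure of $M$-projectivity under finite direct sums and summands in the \emph{source} variable is not literally Lemma~\ref{projprop} but is routine.
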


\begin{proof}
(a)$\Rightarrow$(b) Let $K$ be a finitely $M$-generated submodule of an element in $\add(M)$. By Theorem \ref{2}, $K\in\add(M)$, that is, $K$ is isomorphic to a direct summand of $M^{(n)}$ for some $n>0$. Since $M$ is quasi-projective, $M^{(n)}$ is $M$-projective. Hence $K$ is $M$-projective. 

(b)$\Rightarrow$(c) Let $K$ be a finitely $M$-generated submodule of an element in $\add(M)$. Then there exists an epimorphism $M^{(n)}\to K$ for some $n>0$. Since $K$ is $M$-projective, $K$ is $K$-projective by Lemma \ref{projprop}. Therefore $K$ is quasi-projective. 
(c)$\Rightarrow $(a) It follows from Theorem \ref{semihered2}.
\end{proof}

%

\begin{cor}
Let $R$ be a Dedekind domain which is a complete discrete valuation ring. Then every torsion-free module of finite rank is a quasi-projective finite $\Sigma$-Rickart module.
\end{cor}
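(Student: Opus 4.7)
The plan is to combine Kaplansky's classical structure theorem for finite rank torsion-free modules over a complete DVR with Corollary \ref{5}. First I would recall that over a complete discrete valuation ring $R$ with fraction field $K$, every torsion-free $R$-module of finite rank is isomorphic to $R^a\oplus K^b$ for uniquely determined integers $a,b\geq 0$; this is due to Kaplansky and is part of the background for Example (v). Hence it suffices to establish the corollary for $M=R^a\oplus K^b$.

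By Corollary \ref{5} it is enough to show that every finitely $M$-generated submodule of any element of $\add(M)$ is quasi-projective. Every direct summand of $M^n$ is torsion-free of finite rank, hence of the form $R^c\oplus K^d$; every submodule of $R^c\oplus K^d$ is again torsion-free of finite rank, so by the structure theorem is isomorphic to some $N=R^{c'}\oplus K^{d'}$. To prove that $N$ is $N$-projective I would apply Lemma \ref{projprop}(ii) together with the standard fact that a direct sum of $X$-projective modules is $X$-projective, reducing the task to four relative projectivity checks involving the summands $R^{c'}$ and $K^{d'}$.

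Three of those four cases are immediate: $R^{c'}$ is projective and hence $X$-projective for every $X$; while $K^{d'}$ is $R^{c'}$-projective because every quotient of the finitely generated free module $R^{c'}$ is a finite direct sum of copies of $R$ and $R/\pi^nR$, all of which are $\pi$-adically reduced, and $K^{d'}$ is divisible, so every $R$-homomorphism from $K^{d'}$ into such a quotient is zero.

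The hard part will be the remaining case, that $K^{d'}$ is $K^{d'}$-projective, which by Lemma \ref{projprop}(ii) reduces to $K$ being $K$-projective, equivalently to lifting every $R$-homomorphism $g\colon K\to K/R$ along the canonical quotient map $K\to K/R$. Here completeness of $R$ is essential. I would identify $\Hom_R(K,K/R)$ with the inverse limit $\varprojlim_n\bigl(K/R\xleftarrow{\pi}K/R\bigr)$ and describe the natural map $K=\Hom_R(K,K)\to\Hom_R(K,K/R)$ explicitly; then, given a compatible sequence $(y_n)$ with lifts $\tilde{y}_n\in K$, the identity $\pi^n y_n=y_0$ in $K/R$ forces $\pi^n\tilde{y}_n-\tilde{y}_0\in R$, and the resulting sequence in $R$ is $\pi$-adically Cauchy because consecutive differences lie in $\pi^n R$. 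Completeness of $R$ provides a limit, whose translate $z_0\in K$ satisfies $z_0/\pi^n\equiv y_n\pmod R$ for every $n$, and $h(\pi^{-n}):=z_0/\pi^n$ defines the required lift. With all four cases in hand, Corollary \ref{5} delivers the result.
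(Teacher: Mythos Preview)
Your proposal is correct and, like the paper, hinges on Corollary~\ref{5}(c): show that every finitely $M$-generated submodule of a member of $\add(M)$ is quasi-projective, and conclude. The paper dispatches this quasi-projectivity in a single stroke by citing Rangaswamy--Vanaja \cite[Theorem~5.8]{rangaswamyquasiprojectives}, which asserts directly that every torsion-free module of finite rank over a complete DVR is quasi-projective. You instead unfold that citation: invoking Kaplansky's structure theorem to write any such module as $R^{c'}\oplus K^{d'}$, and then checking the four relative projectivity conditions by hand, with completeness entering only in the $K$-is-$K$-projective step via the Cauchy-sequence argument. Your route is longer but more self-contained, isolating precisely where completeness is used (namely, the vanishing of $\Ext^1_R(K,R)$, which is what your lifting argument amounts to); the paper's route is shorter but leans on an external reference whose proof ultimately performs the same work. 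Note also that your argument never uses the ``finitely $M$-generated'' hypothesis on $N$---you show every submodule is quasi-projective---which is harmless since condition~(c) only asks for less.
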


\begin{proof}
Let $M$ be a torsion-free $R$-module of finite rank. Let $N$ be a finitely $M$-generated submodule of an element in $\add(M)$. Then $N$ is torsion-free and has finite rank. Then $N$ is quasi-projective by \cite[Theorem 5.8]{rangaswamyquasiprojectives}. From Corollary \ref{5}, $M$ is a quasi-projective finite $\Sigma$-Rickart module. 
\end{proof}

It is well known that a ring $R$ is right semi-hereditary if and only if every finitely generated submodule of a right projective module is projective (\cite[Proposition 6.2]{homalg}). In the next result, we give more characterizations for right semi-hereditary rings.

\begin{cor}\label{ringsemi}
The following conditions are equivalent for a ring $R$:
\begin{enumerate}
\item[(a)] $R$ is a right semi-hereditary ring;
\item[(b)] every finitely generated submodule of any projective right $R$-module is projective;
\item[(c)] every finitely generated submodule of any projective right $R$-module is $R$-projective;
\item[(d)] every finitely generated submodule of any projective right $R$-module is quasi-projective;
\item[(e)] every finitely generated submodule of any projective right module has $D_2$ condition. 
\end{enumerate}
\end{cor}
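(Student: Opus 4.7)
The plan is to deduce this as a direct specialization of Theorem \ref{semihered2} and Corollary \ref{5} to the regular module $M=R_R$. The key initial observations are: $\add(R)$ is exactly the class of finitely generated projective right $R$-modules; the notions of finitely $R$-generated and finitely generated coincide; $R_R$ is projective and hence quasi-projective; and by the first part of Example 2.2, $R$ is right semi-hereditary if and only if $R_R$ is a finite $\Sigma$-Rickart module. With these in hand, the corollary becomes essentially a translation of the preceding module-theoretic results into ring language.

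For the cycle of implications, the equivalence (a)$\Leftrightarrow$(b) is the classical result cited in the introduction as \cite[Proposition 6.2]{homalg}, so I would simply quote it. The implications (b)$\Rightarrow$(c), (b)$\Rightarrow$(d), and (b)$\Rightarrow$(e) are immediate since every projective right $R$-module is $R$-projective, is quasi-projective, and satisfies the $D_2$ condition. This handles the easy half of the cycle without any new calculations.

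The essential content lies in the converses (c)$\Rightarrow$(a), (d)$\Rightarrow$(a), and (e)$\Rightarrow$(a). For each of them I would observe that $\add(R)$ is contained in the class of projective right $R$-modules, so restricting the stated hypothesis to an arbitrary $K\in\add(R)$ yields, respectively, condition (b) of Corollary \ref{5}, condition (c) of Corollary \ref{5}, and condition (b) of Theorem \ref{semihered2}, applied to the quasi-projective module $M=R_R$. In each case the conclusion is that $R_R$ is a finite $\Sigma$-Rickart module, whence $R$ is right semi-hereditary by Example 2.2(i). The only conceptual point to check is this passage from \emph{every} projective module to members of $\add(R)$, which is immediate from the inclusion; no further argument is needed, as the real work has already been carried out in Theorem \ref{semihered2} and Corollary \ref{5}, and there is no genuine obstacle beyond assembling these citations.
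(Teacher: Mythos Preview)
Your proposal is correct and is precisely the approach the paper intends: the corollary is stated without proof because it is the specialization of Theorem~\ref{semihered2} and Corollary~\ref{5} to $M=R_R$, together with the classical equivalence (a)$\Leftrightarrow$(b) cited just before the statement. Your observation that the hypotheses in (c)--(e), stated for arbitrary projective modules, restrict to elements of $\add(R)$ to match the module-theoretic results is the only point needing mention, and you handle it correctly.
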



In \cite{lm1} $\Sigma$-Rickart modules were characterized using a class of modules called $\mathfrak{E}_M$. 
For a right $R$-module $M$, it is denoted by $\mathfrak{E}_M$ the class of all right $R$-modules $A$ such that
for any monomorphism $\alpha: N\rightarrow  M$ with $N$ an $M$-generated module  and for any homomorphism $\beta: N\rightarrow A$, there exists $\gamma:M\to A$ such that $\beta=\gamma\alpha$. For the analogue of the above class related to finite $\Sigma$-Rickart modules, we introduce the following.

\begin{defn}
Let $M$ be a right $R$-module. Denote by $\mathfrak{F}_M$ the class of all right $R$-modules $A$ such that for any monomorphism $\alpha: N\rightarrow M$ with $N$ a finitely $M$-generated module and for any homomorphism $\beta: N\rightarrow A$, there exists $\gamma:M\to A$ such that $\beta=\gamma\alpha$.
\end{defn}



For a right $R$-module $M$, a right $R$-module $A$ is said to be \emph{f$_M$-injective} if for any finitely $M$-generated submodule $N$ of $M$ and for any homomorphism $\beta:N\to A$, there exists a homomorphism $\gamma:M\to A$ such that $\gamma|_N=\beta$.  Note that in the case of $M=R_R$, $A$ is said to be \emph{f-injective} if $A$ is f$_R$-injective (see \cite{guptaf}). We can easily see that the every element in $\mathfrak{F}_M$ is exactly f$_M$-injective as the following.

\begin{prop}
For a right $R$-module $M$, a module $A$ is in $\mathfrak{F}_M$ iff $A$ is f$_M$-injective.
\end{prop}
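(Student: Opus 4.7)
The plan is to observe that the two conditions differ only by whether the embedding of $N$ into $M$ is taken to be the specific inclusion map or an arbitrary monomorphism, and to use an isomorphism along the monomorphism to reduce one case to the other.

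First I would handle the easy direction: if $A\in\mathfrak{F}_M$, then for any finitely $M$-generated submodule $N\leq M$ and any $\beta:N\to A$, apply the defining property of $\mathfrak{F}_M$ with $\alpha$ being the inclusion $N\hookrightarrow M$ to obtain $\gamma:M\to A$ with $\gamma\alpha=\beta$, which is exactly $\gamma|_N=\beta$. Hence $A$ is f$_M$-injective.

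For the converse, suppose $A$ is f$_M$-injective and let $\alpha:N\to M$ be a monomorphism with $N$ finitely $M$-generated and $\beta:N\to A$ any homomorphism. Set $N':=\alpha(N)\leq M$. Since $\alpha$ is injective, $\alpha$ restricts to an isomorphism $\bar\alpha:N\to N'$, and $N'\cong N$ is itself finitely $M$-generated (compose an epimorphism $M^{(n)}\twoheadrightarrow N$ with $\bar\alpha$). Then $\beta':=\beta\circ\bar\alpha^{-1}:N'\to A$ is a homomorphism defined on a finitely $M$-generated submodule of $M$, so by f$_M$-injectivity there is $\gamma:M\to A$ with $\gamma|_{N'}=\beta'$. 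Checking on elements, $\gamma\alpha(n)=\beta'(\alpha(n))=\beta(\bar\alpha^{-1}(\alpha(n)))=\beta(n)$ for every $n\in N$, so $\gamma\alpha=\beta$ and $A\in\mathfrak{F}_M$.

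There is no real obstacle here; the argument is essentially just unwinding the two definitions and transporting $\beta$ across the isomorphism $\bar\alpha$. The only point to mention explicitly is that $N'=\alpha(N)$ inherits the property of being finitely $M$-generated from $N$ via $\bar\alpha$, which is why the hypothesis of f$_M$-injectivity applies to $N'$.
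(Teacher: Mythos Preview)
Your proof is correct. The paper does not supply a proof for this proposition at all---it is stated immediately after the remark ``We can easily see that every element in $\mathfrak{F}_M$ is exactly f$_M$-injective''---so your argument, which just transports $\beta$ along the isomorphism $\bar\alpha:N\to\alpha(N)$ to reduce the arbitrary-monomorphism condition to the inclusion case, is precisely the routine verification the authors left to the reader.
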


\begin{prop}\label{injadd}
For a right $R$-module $M$, a module $A$ is in $\mathfrak{F}_M$ if and only if for any monomorphism $\alpha: N\rightarrow K$ with $N$ a finitely $M$-generated module and $K\in\add(M)$, and for any homomorphism $\beta: N\rightarrow A$, there exists $\gamma:K\to A$ such that $\beta=\gamma\alpha$.
\end{prop}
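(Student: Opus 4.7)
The backward direction is immediate: take $K := M \in \add(M)$ in the stated property to recover the defining condition of $\mathfrak{F}_M$, so $A \in \mathfrak{F}_M$.

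For the forward direction, suppose $A \in \mathfrak{F}_M$ and let $\alpha: N \to K$ be a monomorphism with $N$ finitely $M$-generated, $K \in \add(M)$, and $\beta: N \to A$ a homomorphism. Since $K$ is a direct summand of some $M^{(n)}$, write $M^{(n)} = K \oplus L$ and compose with the canonical inclusion $\iota: K \hookrightarrow M^{(n)}$ to realize $N$ as a finitely $M$-generated submodule of $M^{(n)}$ via $\iota\alpha$. Any extension $\gamma': M^{(n)} \to A$ of $\beta$ along $\iota\alpha$ then restricts to $\gamma := \gamma'\iota: K \to A$ satisfying $\gamma\alpha = \beta$, which reduces the problem to the case $K = M^{(n)}$.

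Handle this reduced case by induction on $n$. The base case $n = 1$ is the defining condition of $\mathfrak{F}_M$. For the inductive step, decompose $M^{(n+1)} = M^{(n)} \oplus M$ with projections $p_1, p_2$, set $\bar N := p_2(N) \subseteq M$ (which is finitely $M$-generated as a quotient of $N$), and construct $\gamma$ in the form $\tilde\gamma_1 p_1 + \tilde\gamma_2 p_2$: first extend $\beta|_{N \cap M^{(n)}}$ to some $\tilde\gamma_1: M^{(n)} \to A$ via the inductive hypothesis, then observe that the residue $\beta - \tilde\gamma_1 p_1$ vanishes on $N \cap M^{(n)}$ and hence factors through $\bar N$, and finally extend this factored map to $\tilde\gamma_2: M \to A$ using $A \in \mathfrak{F}_M$ applied to the inclusion $\bar N \hookrightarrow M$. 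A direct check then shows that the resulting $\gamma$ restricts to $\beta$ on $N$.

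The main obstacle in the inductive step is that $N \cap M^{(n)}$ need not be finitely $M$-generated even when $N$ is, so the inductive hypothesis does not apply verbatim to $\beta|_{N \cap M^{(n)}}$. Address this by fixing a finite generating family $\phi_1, \ldots, \phi_k: M \to N$ and working with the finitely $M$-generated submodule $\tilde N := p_1\phi_1(M) + \cdots + p_1\phi_k(M) \subseteq M^{(n)}$ as a surrogate for $N \cap M^{(n)}$; the inductive hypothesis applied to $\tilde N$ produces $\tilde\gamma_1$, and the remaining discrepancy, which lives in the $M$-component, is absorbed by $\tilde\gamma_2$. The final bookkeeping verifying $\gamma|_N = \beta$ follows from expressing an arbitrary element of $N$ in terms of the $\phi_i$ and using the defining identities of their first and second coordinate components in $M^{(n)} \oplus M$.
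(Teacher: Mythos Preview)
Your backward direction and the reduction to $K=M^{(n)}$ are fine, and you have correctly identified the real obstacle in the inductive step: the intersection $N\cap(M^{(n)}\oplus 0)$ need not be finitely $M$-generated, so the inductive hypothesis cannot be applied to $\beta|_{N\cap M^{(n)}}$ directly.

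However, your proposed workaround does not close this gap. You pass to the finitely $M$-generated module $\tilde N=p_1(N)=\sum_i p_1\phi_i(M)\subseteq M^{(n)}$ and say that ``the inductive hypothesis applied to $\tilde N$ produces $\tilde\gamma_1$''. But the inductive hypothesis requires as input a homomorphism $\tilde N\to A$ to be extended, and you never specify one. There is no canonical map $\tilde N\to A$: the module $\tilde N=p_1(N)$ is a \emph{quotient} of $N$, not a submodule, so $\beta$ does not restrict to it, and any attempt to define a map on $\tilde N$ by $p_1(x)\mapsto \beta(x)$ fails to be well-defined because the fibres of $p_1|_N$ are cosets of $N\cap(M^{(n)}\oplus 0)$, on which $\beta$ need not be constant.

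Even granting some $\tilde\gamma_1:M^{(n)}\to A$ obtained from an unspecified map on $\tilde N$, the next step requires the residue $\beta-\tilde\gamma_1 p_1$ to vanish on $\ker(p_2|_N)=N\cap(M^{(n)}\oplus 0)$ so that it factors through $\bar N$. This forces $\tilde\gamma_1$ to agree with $\beta$ on $N\cap(M^{(n)}\oplus 0)$. Since $N\cap(M^{(n)}\oplus 0)\subseteq\tilde N$, you would first need to extend $\beta|_{N\cap M^{(n)}}$ to a map $\tilde N\to A$ and \emph{then} invoke the inductive hypothesis. But producing that preliminary extension is itself an injectivity problem for the inclusion $N\cap M^{(n)}\hookrightarrow\tilde N$, and nothing in the hypotheses supplies it. In other words, replacing $N\cap M^{(n)}$ by the larger finitely $M$-generated module $\tilde N$ does not eliminate the obstacle; it merely relocates it to the step of defining the input map on $\tilde N$.

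The paper's argument (modelled on \cite[Proposition~3.2]{lm1}) avoids this coordinate-wise induction altogether; a workable route is to pass to the injective hull $E(A)$, extend $\beta$ there, and then correct the extension one coordinate at a time using the $\mathfrak{F}_M$-property and the existence of the ambient solution in $E(A)$, rather than trying to build $\tilde\gamma_1$ purely inside $A$.
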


\begin{proof} The proof is similar to that of \cite[Proposition 3.2]{lm1}.
\end{proof}

\begin{rem}\label{frim}
(i) We have the following contentions,
\[\mathfrak{E}_R\subseteq\{\text{all}~ M\text{-injective modules}\}\subseteq\mathfrak{E}_M\subseteq \mathfrak{F}_M=\{\text{all} ~\text{f}_M\text{-injective modules}\}\]
where $\mathfrak{E}_R=\{\text{all}~\text{injective modules}\}$. Note that $\mathfrak{F}_R=\{\text{all}~\text{f-injective modules}\}$.

(ii) If every submodule of $M$ is finitely $M$-generated then every module in $\mathfrak{F}_M$ is $M$-injective.
\end{rem}

\begin{prop}\label{xiprop}
The following statements hold true for  a right $R$-module $M$:
\begin{enumerate}
\item[(i)]\label{dirsumgamma} $\mathfrak{F}_M$ is closed under direct products.
\item[(ii)]\label{prodxi} $\mathfrak{F}_M$ is closed under direct summands.
\item[(iii)]\label{fmc2} If $M$ is in $\mathfrak{F}_M$ then $M$ has $C_2$ condition.
\item[(iv)]\label{mggamma} If every finitely $M$-generated submodule of $A$ is in $\mathfrak{F}_M$, then $A$ is in $\mathfrak{F}_M$.
\end{enumerate}
\end{prop}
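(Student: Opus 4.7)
The plan is that parts (i), (ii), and (iv) reduce to routine diagram chases from the defining extension property of $\mathfrak{F}_M$, while (iii) is the only part requiring a genuine construction.

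For (i), given a family $\{A_i\}_{i\in I}\subseteq\mathfrak{F}_M$, a monomorphism $\alpha:N\to M$ with $N$ finitely $M$-generated, and $\beta:N\to\prod_i A_i$, I would lift each component $\pi_i\beta:N\to A_i$ to some $\gamma_i:M\to A_i$ using $A_i\in\mathfrak{F}_M$ and assemble them into $\gamma=(\gamma_i)_i:M\to\prod_i A_i$, with the identity $\gamma\alpha=\beta$ verified componentwise. For (ii), if $A=B\oplus C\in\mathfrak{F}_M$ and one wants $B\in\mathfrak{F}_M$, any $\beta:N\to B$ can be pushed into $A$ via the summand inclusion, lifted to some $\gamma':M\to A$, and then pulled back via the projection onto $B$. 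For (iv), given $\beta:N\to A$ with $N$ finitely $M$-generated, the image $\beta(N)$ is itself finitely $M$-generated (as a quotient of $N$, composing an epimorphism $M^{(n)}\twoheadrightarrow N$ with $N\twoheadrightarrow\beta(N)$); by hypothesis $\beta(N)\in\mathfrak{F}_M$, so we obtain an extension $\gamma':M\to\beta(N)$, and composing with the inclusion $\beta(N)\hookrightarrow A$ yields the required $\gamma:M\to A$.

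The substantive step is (iii). Assume $N\leq M$ with an isomorphism $f:M'\to N$, where $M'\leq^\oplus M$; write $M=M'\oplus M''$ and let $\pi:M\to M'$ be the corresponding projection. Since $M'$ is a quotient of $M$ via $\pi$, the module $N\cong M'$ is finitely $M$-generated, so the inclusion $\alpha:N\hookrightarrow M$ is eligible as input to the defining property of $M\in\mathfrak{F}_M$. The key move is to feed in the map $\beta:N\to M$ given by $\beta=\iota_{M'}\circ f^{-1}$, where $\iota_{M'}:M'\hookrightarrow M$ is the summand inclusion. This yields $\gamma\in\End_R(M)$ with $\gamma(n)=f^{-1}(n)\in M'$ for all $n\in N$. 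Then $f\pi\gamma:M\to N$ is a retraction of $\alpha$, since $(f\pi\gamma)(n)=f(f^{-1}(n))=n$ for $n\in N$, and hence $N\leq^\oplus M$.

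The one point requiring care, and the only genuine obstacle, is the choice of $\beta$ in (iii): the naive map $f^{-1}:N\to M'$ cannot be fed directly into the $\mathfrak{F}_M$-hypothesis for $M$ (which demands codomain $M$), and it is precisely the trick of landing in $M'$ \emph{as a submodule of} $M$ that lets the subsequent projection $\pi$ discard the $M''$-component and recover $f^{-1}$ on $N$. Once this map is in hand, the extension produced by $M\in\mathfrak{F}_M$ automatically supplies the splitting, and the $C_2$ condition follows.
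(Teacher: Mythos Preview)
Your proposal is correct and matches the paper's approach. The paper only writes out part (iv) explicitly---with exactly your argument that $\Img\beta$ is finitely $M$-generated, hence lies in $\mathfrak{F}_M$ by hypothesis, so the corestricted map extends---and defers (i)--(iii) to the analogous result \cite[Proposition~3.6]{lm1} for the class $\mathfrak{E}_M$; your direct arguments for (i), (ii), and (iii) are the standard ones and are precisely what that reference contains (adapted to ``finitely $M$-generated'').
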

\begin{proof} All proofs are similar to those of \cite[Proposition 3.6]{lm1}.
However, we give the proof of (iv) for the convenience of the reader.
(iv) Let $\alpha:N\to M$ be a monomorphism with $N$ finitely $M$-generated and let $\beta:N\to A$ be any homomorphism. Since $N$ is finitely $M$-generated, $\Img\beta$ is finitely $M$-generated. Because $\Img \beta \subseteq A$, by hypothesis there exists $\gamma:M\to\Img\beta$ such that $\beta(N)=\gamma\alpha(N)$. Therefore $A\in\mathfrak{F}_M$.
\end{proof}

%
%
%

\begin{cor}
If every finitely generated submodule of $M$ is f-injective then $M$ is also f-injective.
\end{cor}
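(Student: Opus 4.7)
The plan is to deduce this as an immediate specialization of Proposition \ref{xiprop}(iv) with the regular module $R_R$ playing the role of $M$. First, I would invoke Remark \ref{frim}(i), which records the identification $\mathfrak{F}_R = \{\text{all f-injective modules}\}$; equivalently, this follows from the fact (stated just before the proposition characterizing $\mathfrak{F}_M$ as the class of f$_M$-injectives) that being in $\mathfrak{F}_R$ is the same as being f$_R$-injective, which by definition is f-injective. Second, I would observe that a submodule is \emph{finitely $R$-generated} in the sense of the paper precisely when it admits an epimorphism from some $R^{(n)}$, i.e., when it is finitely generated in the usual sense.

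With these two translations in place, the corollary follows by applying Proposition \ref{xiprop}(iv) with $R$ substituted for $M$ and the given $M$ substituted for $A$: the hypothesis that every finitely generated submodule of $M$ is f-injective becomes the statement that every finitely $R$-generated submodule of $M$ belongs to $\mathfrak{F}_R$, and the conclusion that $M \in \mathfrak{F}_R$ translates back to $M$ being f-injective. There is no real obstacle here; the entire content of the corollary is the recognition that f-injectivity is the $M = R$ instance of membership in $\mathfrak{F}_M$, so that part (iv) of the proposition immediately specializes to the desired statement.
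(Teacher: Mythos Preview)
Your proposal is correct and is exactly the intended derivation: the corollary is the specialization of Proposition~\ref{xiprop}(iv) to $M=R$, using the identification $\mathfrak{F}_R=\{\text{f-injective modules}\}$ from Remark~\ref{frim}(i) and the fact that finitely $R$-generated means finitely generated. The paper gives no explicit proof precisely because this substitution is immediate.
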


\begin{prop}\label{endofm}
The following conditions are equivalent for a module $M$:
\begin{enumerate}
	\item[(a)] $M$ is an endoregular module;
	\item[(b)] $M$ has $D_2$ condition and $\mathfrak{F}_M$=\emph{Mod}-$R$.
\end{enumerate}
\end{prop}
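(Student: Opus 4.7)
My plan is to prove both directions through the characterization that $M$ is endoregular if and only if $M$ is Rickart with the $C_2$ condition (Proposition~\ref{ricd2}(ii)), combined with the fact from Proposition~\ref{ricd2}(i) that every Rickart module already has the $D_2$ condition.

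For (a)$\Rightarrow$(b): Assuming $M$ is endoregular, $M$ is Rickart and hence has $D_2$. To establish $\mathfrak{F}_M=\text{Mod-}R$, fix an arbitrary module $A$, a monomorphism $\alpha:N\to M$ with $N$ finitely $M$-generated, and a homomorphism $\beta:N\to A$. It suffices to prove $\alpha(N)\leq^\oplus M$, for then $\beta\alpha^{-1}$ composed with the projection $M\to\alpha(N)$ is the required extension $\gamma$. Since $\End_R(M)$ is von Neumann regular, $\End_R(M^{(n+1)})\cong \Mat_{n+1}(\End_R(M))$ is also von Neumann regular, so $M^{(n+1)}$ is endoregular. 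Choose an epimorphism $p:M^{(n)}\to N$ and define $\psi\in\End_R(M^{(n+1)})$ on $M^{(n)}\oplus M$ by $\psi(x,y)=(0,\alpha p(x))$. Because $M^{(n+1)}$ is Rickart with $C_2$, its image $\Img\psi=0\oplus\alpha(N)$ is a direct summand of $M^{(n+1)}=M^{(n)}\oplus M$. A short modular-law observation---if $X\leq^\oplus A\oplus B$ and $X\subseteq B$, then $X\leq^\oplus B$---then gives $\alpha(N)\leq^\oplus M$.

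For (b)$\Rightarrow$(a): Since $\mathfrak{F}_M=\text{Mod-}R$, in particular $M\in\mathfrak{F}_M$, and Proposition~\ref{xiprop}(iii) supplies the $C_2$ condition for $M$. For any $\varphi\in\End_R(M)$, apply the hypothesis with $N=\Img\varphi$ (finitely $M$-generated via $\varphi$), the inclusion $\alpha:\Img\varphi\hookrightarrow M$, $A=\Img\varphi$, and $\beta=\mathrm{id}_{\Img\varphi}$. The resulting $\gamma:M\to\Img\varphi$ splits the inclusion, so $\Img\varphi\leq^\oplus M$. Combined with the isomorphism $M/\Ker\varphi\cong\Img\varphi$ and the $D_2$ hypothesis, this forces $\Ker\varphi\leq^\oplus M$, so $M$ is Rickart. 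Rickart plus $C_2$ yields endoregular by Proposition~\ref{ricd2}(ii).

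The main obstacle lies in (a)$\Rightarrow$(b), where $\alpha(N)\leq M$ is not visibly the image of any endomorphism of $M$ itself; the remedy is to realize it as the image of a concrete endomorphism of the larger endoregular module $M^{(n+1)}$ and then push the splitting back down to $M$ by the modular-law lemma. Everything else reduces to unpacking the hypotheses in the definitions.
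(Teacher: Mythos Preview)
Your proof is correct and follows essentially the same approach as the paper: in (a)$\Rightarrow$(b) both arguments show $\alpha(N)\leq^\oplus M$ by passing to an endoregular finite power of $M$ (the paper uses $M^{(n)}$ and cites \cite[Corollary~3.15]{lrr5} rather than your explicit $\psi$ on $M^{(n+1)}$, and leaves the modular-law step implicit), and in (b)$\Rightarrow$(a) both split the inclusion $\Img\varphi\hookrightarrow M$ using $\Img\varphi\in\mathfrak{F}_M$ and then apply $D_2$. Your detour through $C_2$ via Proposition~\ref{xiprop}(iii) is harmless but unnecessary, since you have already shown that both $\Ker\varphi$ and $\Img\varphi$ are direct summands for every $\varphi$, which is endoregularity directly.
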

\begin{proof}
(a)$\Rightarrow$(b) It is clear that $M$ has $D_2$ condition. Let $L$ be any right $R$-module and let $N$ a finitely $M$-generated submodule of $M$.
Then there exists an epimorphism $\rho:M^{(n)}\to N$ for some $n>0$. 
Also, let $\alpha:N\to M$ be any monomorphism and $\beta:N\to L$ be any homomorphism.
By \cite[Corollary 3.15]{lrr5}, $M^{(n)}$ is an endoregular module, and hence $\alpha\rho(M^{(n)})=\alpha(N)$ is a direct summand of $M$.
Take $\gamma=\beta\alpha^{-1}\oplus 0$.
Then $\gamma: M\rightarrow L$ is a homomorphism such that $\gamma\alpha= \beta$.  Therefore $L$ is in $\mathfrak{F}_M$.

(b)$\Rightarrow$(a) Let $\varphi:M\to M$ be any endomorphism of $M$. Then $\Img\varphi$ is finitely $M$-generated. Since $\Img\varphi$ is in $\mathfrak{F}_M$, the canonical inclusion $j:\Img\varphi\to M$ splits, that is, $\Img\varphi$ is a direct summand of $M$. By the $D_2$ condition, we can infer that $\Ker\varphi$ is a direct summand of $M$. Thus, $M$ is an endoregular module.
\end{proof}


An epimorphism  $\mu:A\to B$ is called an $M$\emph{-pure epimorphism} if for any homomorphism $\beta:M\to B$, there exists $\gamma:M\to A$ such that $\mu\gamma=\beta$ \cite{wisbauerfoundations} (see also \cite[Proposition 3.10]{lm1}).

\begin{rem}\label{mpepisprop}
It is not difficult to see that:
\begin{enumerate}
\item[(i)] An epimorphism $\mu:A\to B$ is an $M$-pure epimorphism if and only if $\mu$ is a $K$-pure epimorphism for any $K$ in $\add(M)$. 
\item[(ii)] For a projective module $M$, every epimorphism is an $M$-pure epimorphism.
\item[(iii)] If $\mu:A\to B$ and $\nu:C\to D$ are $M$-pure epimorphisms, then $\mu\oplus\nu:A\oplus C\to B\oplus D$ is also an $M$-pure epimorphism.
\end{enumerate}
\end{rem}

\begin{lem}[{\cite[Lemma 3.11]{lm1}}]\label{epipure}
Let $M$ be $M^{(\mathcal{I})}$-projective for any \emph{(}resp., finite\emph{)} index set $\mathcal{I}$. If $A$ is an \emph{(}resp., finitely\emph{)} $M$-generated module then every epimorphism $\mu:A\to B$ is an $M$-pure epimorphism.
\end{lem}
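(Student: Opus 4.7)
The plan is to reduce $M$-purity of $\mu$ to a direct application of the projectivity hypothesis on $M$, using the $M$-generated presentation of $A$. The idea is that an $M$-generating epimorphism $\pi:M^{(\mathcal{J})}\to A$ turns any map out of $M$ into a lifting problem against $\mu\pi$, and the hypothesis is tailored precisely to solve such a problem.

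More concretely, I would proceed as follows. Since $A$ is (respectively, finitely) $M$-generated, fix an epimorphism $\pi: M^{(\mathcal{J})}\to A$ for some (respectively, finite) index set $\mathcal{J}$. Composing with $\mu$ yields an epimorphism $\mu\pi: M^{(\mathcal{J})}\to B$. Now let $\beta: M\to B$ be arbitrary. The hypothesis says that $M$ is $M^{(\mathcal{J})}$-projective, so there exists $\tilde\gamma: M\to M^{(\mathcal{J})}$ with $\mu\pi\tilde\gamma=\beta$. Setting $\gamma:=\pi\tilde\gamma:M\to A$ then gives $\mu\gamma=\mu\pi\tilde\gamma=\beta$, which is exactly the defining condition of an $M$-pure epimorphism.

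There is really no substantial obstacle here; the only point requiring attention is matching the cardinality of $\mathcal{J}$ to the projectivity assumption. In the unrestricted case this is immediate because the hypothesis allows any index set, while in the parenthetical ``finite'' case one must observe that ``finitely $M$-generated'' produces a presentation by a finite direct sum $M^{(\mathcal{J})}$ with $|\mathcal{J}|<\infty$, so that the $M^{(\mathcal{I})}$-projectivity assumption for finite $\mathcal{I}$ suffices. The two versions of the statement are thus proved by the same diagram, differing only in the index set.
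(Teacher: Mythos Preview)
Your argument is correct and is the standard one: factor through an $M$-generating presentation $\pi:M^{(\mathcal{J})}\twoheadrightarrow A$, lift $\beta$ through the epimorphism $\mu\pi$ using $M^{(\mathcal{J})}$-projectivity of $M$, and compose with $\pi$. The paper does not supply its own proof of this lemma (it is merely cited from \cite{lm1}), so there is nothing to compare against; your reasoning is exactly what one would expect such a proof to look like.
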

This results is a module theoretic version of \cite[Theorem 2]{megibbenabsolutely}.
\begin{thm}\label{semifac}
Consider the following conditions for a module $M$:
\begin{enumerate}
\item[(i)] $M$ is a finite $\Sigma$-Rickart module.
\item[(ii)] $\mathfrak{F}_M$ is closed under $M$-pure epimorphisms.
\item[(iii)] If $\mu\in\emph{Hom}_R(A, A')$ is an $M$-pure epimorphism with $A$ $M$-injective, then $A'\in\mathfrak{F}_M$.
\end{enumerate}
Then the implications \emph{(i)$\Rightarrow$(ii)$\Rightarrow$(iii)} hold true. In addition, if $M$ is $M^{(\mathcal{I})}$-projective for any index set $\mathcal{I}$, then the three conditions are equivalent.
\end{thm}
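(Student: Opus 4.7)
The overall plan is to prove the chain $(i) \Rightarrow (ii) \Rightarrow (iii)$ without additional hypotheses, and then to establish $(iii) \Rightarrow (i)$ under the projectivity assumption via the characterization in Theorem \ref{2}.

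For $(i) \Rightarrow (ii)$, let $\mu\colon A \to A'$ be an $M$-pure epimorphism with $A \in \mathfrak{F}_M$, and take $\alpha\colon N \to M$ a monomorphism with $N$ finitely $M$-generated together with $\beta\colon N \to A'$; the task is to produce $\gamma\colon M \to A'$ with $\gamma\alpha = \beta$. Pick an epimorphism $\rho\colon M^{(n)} \to N$. Since $M$ is finite $\Sigma$-Rickart, Lemma \ref{relrick}(i) applied to $\alpha\rho\colon M^{(n)} \to M$ gives $\Ker\rho = \Ker(\alpha\rho)\leq^\oplus M^{(n)}$, hence $M^{(n)}=\Ker\rho\oplus L$ with $\rho|_L\colon L \to N$ an isomorphism. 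Because $M^{(n)} \in \add(M)$, Remark \ref{mpepisprop}(i) makes $\mu$ an $M^{(n)}$-pure epimorphism, so $\beta\rho$ lifts to some $\delta\colon M^{(n)} \to A$ with $\mu\delta = \beta\rho$. Setting $\tau := \delta|_L\circ(\rho|_L)^{-1}\colon N \to A$ gives $\mu\tau = \beta$, and by $A \in \mathfrak{F}_M$ applied to the monomorphism $\alpha$, $\tau$ extends along $\alpha$ to some $\sigma\colon M \to A$; then $\gamma := \mu\sigma$ satisfies $\gamma\alpha = \beta$. For $(ii) \Rightarrow (iii)$, since every $M$-injective module lies in $\mathfrak{F}_M$ by Remark \ref{frim}(i), the hypothesis of (ii) applies to any $M$-pure epimorphism out of an $M$-injective $A$, yielding $A' \in \mathfrak{F}_M$.

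For $(iii) \Rightarrow (i)$ under the projectivity hypothesis, the plan is to verify conditions (b1) and (b2) of Theorem \ref{2} for every $K \in \add(M)$. For (b2), let $\rho\colon N \to K$ be an epimorphism with $N$ finitely $M$-generated; since $N$ is a quotient of some $M^{(j)}$ and $M$ is $M^{(j)}$-projective, Lemma \ref{projprop}(i) gives that $M$ is $N$-projective, hence by Lemma \ref{projprop}(ii) $M^{(m)}$ is $N$-projective, and since $K \leq^\oplus M^{(m)}$ another application of Lemma \ref{projprop}(i) yields that $K$ is $N$-projective; thus the identity on $K$ lifts through $\rho$, splitting it. For (b1), a finitely $M$-generated submodule $N \leq K \in \add(M)$ needs to lie in $\add(M)$, equivalently $N \leq^\oplus K$. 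I plan to embed $K$ into an $M$-generated $M$-injective module $E$ so that by Lemma \ref{epipure} the projection $\pi\colon E \to E/N$ is an $M$-pure epimorphism; then $(iii)$ yields $E/N \in \mathfrak{F}_M$. A diagram chase on $0 \to N \to E \to E/N \to 0$, combining the $M$-injectivity of $E$ (used to extend a given $\beta\colon L \to N$ to a map $\delta\colon M \to E$) with the f$_M$-injective property of $E/N$ (used to correct the induced composite $\pi\delta\colon M \to E/N$ back into $N$), is expected to produce a retraction $K \to N$ extending the identity on $N$, whence $N \leq^\oplus K$.

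The main obstacle lies in the $(iii) \Rightarrow (i)$ direction: constructing an $M$-generated $M$-injective module $E$ containing $K$ (so that Lemma \ref{epipure} applies to make $\pi$ an $M$-pure epimorphism) and then executing the diagram chase that promotes $E/N \in \mathfrak{F}_M$ into the existence of the retraction $K \to N$. Both steps essentially require the full force of the $M^{(\mathcal{I})}$-projectivity hypothesis; without it, Lemma \ref{epipure} cannot guarantee the needed $M$-purity of $\pi$, and the lifts required by the diagram chase are not available in general.
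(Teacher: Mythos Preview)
Your arguments for $(i)\Rightarrow(ii)$, $(ii)\Rightarrow(iii)$, and condition (b2) in $(iii)\Rightarrow(i)$ are correct. The problem is (b1). First, a small slip: ``$N\in\add(M)$'' is not equivalent to ``$N\leq^\oplus K$'' (take $M=K=\mathbb{Z}$, $N=2\mathbb{Z}$); you are aiming for more than Theorem~\ref{2} actually requires. More importantly, the chase you sketch with the quotient $E/N$ does not produce a retraction. The $\mathfrak{F}_M$-property of $E/N$ lets you extend maps \emph{into} $E/N$ along monomorphisms $L\hookrightarrow K'$ with $K'\in\add(M)$, but the only natural map $N\to E/N$ is zero; and if instead you lift the composite $K\hookrightarrow E\to E/N$ along the $M$-pure $\pi$ to some $g\colon K\to E$, the resulting candidate $r=j-g\colon K\to N$ satisfies only $r|_N=1_N-g|_N$ with $g|_N\in\End_R(N)$ uncontrolled. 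Neither the $M$-injectivity of $E$ nor $E/N\in\mathfrak{F}_M$ forces $g|_N=0$.

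The fix is to quotient not by $N$ but by the kernel of a chosen epimorphism onto $N$. Take $\rho\colon M^{(n)}\twoheadrightarrow N$ with $C=\Ker\rho$, and embed $M^{(n)}$ (rather than $K$) into an $M$-generated $M$-injective $E$: the trace of $M$ in the injective hull $E(M^{(n)})$ works, since any map $L\to E$ with $L\leq M$ extends to $M\to E(M^{(n)})$ by injectivity and the image automatically lies in the trace. Then $\pi\colon E\to E/C$ is $M$-pure by Lemma~\ref{epipure}, so $E/C\in\mathfrak{F}_M$ by (iii). The isomorphism $N\cong M^{(n)}/C$ gives a monomorphism $\psi\colon N\to E/C$; using the inclusion $N\hookrightarrow K$ with $K\in\add(M)$ and Proposition~\ref{injadd}, extend $\psi$ to $\gamma\colon K\to E/C$, then lift $\gamma$ along $\pi$ (which is $K$-pure by Remark~\ref{mpepisprop}(i)) to $\tilde\gamma\colon K\to E$. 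For $x\in N$ one has $\tilde\gamma(x)\equiv y\pmod{C}$ whenever $\rho(y)=x$, so $\tilde\gamma|_N$ lands in $M^{(n)}$ and $\rho\circ\tilde\gamma|_N=1_N$; hence $\rho$ splits and $N\in\add(M)$. Equivalently, bypass Theorem~\ref{2} and run this argument directly with $\varphi\in\End_R(M^{(n)})$, $N=\Img\varphi$, $C=\Ker\varphi$, obtaining $\Ker\varphi\leq^\oplus M^{(n)}$.
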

 
\begin{proof} The proofs are similar to those of \cite[Theorem 3.12]{lm1}.
\end{proof}

Remark from \cite[Example 3.13]{lm1} that the converse of (i)$\Rightarrow$(ii) is not true, in general.
Recall that a submodule $N$ of a right $R$-module $M$ is said to be \emph{pure} if for every left $R$-module $K$, the canonical homomorphism $\iota\otimes 1:N\otimes_R K\to M\otimes_R K$ is a monomorphism, where $\iota:N\to M$ is the canonical inclusion. $M$ is said to be \emph{absolutely pure} if $M$ is a pure submodule of any module which contains $M$ as a submodule (see \cite{megibbenabsolutely}). 

\begin{lem}[{\cite[Corollary 2]{megibbenabsolutely} and Remark \ref{frim}(i)}]\label{meggiben}
A right $R$-module $M$ is absolutely pure if and only if $M$ is f-injective if and only if $M$ is in $\mathfrak{F}_R$.
\end{lem}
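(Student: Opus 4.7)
The plan is to recognize the statement as a clean amalgamation of two pieces, one already essentially established in the paper and one classical. The first equivalence, $M\in\mathfrak{F}_R$ iff $M$ is f-injective, is nearly tautological given the setup: the proposition preceding this lemma characterizes membership in $\mathfrak{F}_M$ as f$_M$-injectivity, and specializing to $M=R_R$ one notes that a submodule $N\leq R_R$ is finitely $R$-generated precisely when $N$ is a finitely generated right ideal. This is exactly the content of Remark \ref{frim}(i), and I would just cite it rather than re-derive it.

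The remaining equivalence, $M$ absolutely pure iff $M$ f-injective, is Megibben's \cite[Corollary 2]{megibbenabsolutely} and I would invoke it directly. For the reader's convenience I would include a one-line sketch: apply $\Hom_R(-,M)$ to the short exact sequence $0\to I\to R\to R/I\to 0$ for a finitely generated right ideal $I$, and identify $\Ext^1_R(R/I,M)$ with the cokernel of the restriction map $\Hom_R(R,M)\to\Hom_R(I,M)$. Thus f-injectivity of $M$ is equivalent to the vanishing of $\Ext^1_R(R/I,M)$ for every finitely generated right ideal $I$, and Megibben's theorem says this vanishing is exactly the condition that $M$ be pure in every module containing it.

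There is no real obstacle here because the substantive work has been done elsewhere. The only point worth flagging is that one should not try to prove absolute purity directly from the definition via tensor products; the efficient route is to pass through the $\Ext^1$-vanishing reformulation and then quote Megibben. Accordingly the proof would occupy only a couple of lines: assemble the chain of equivalences $M$ absolutely pure $\iff$ $M$ f-injective (by \cite[Corollary 2]{megibbenabsolutely}) $\iff$ $M\in\mathfrak{F}_R$ (by Remark \ref{frim}(i)).
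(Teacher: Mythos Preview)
Your proposal is correct and matches the paper's approach exactly: the paper gives no proof for this lemma beyond the two citations in its label, and you have identified precisely those two ingredients (Megibben's \cite[Corollary 2]{megibbenabsolutely} for absolutely pure $\Leftrightarrow$ f-injective, and Remark \ref{frim}(i) for f-injective $\Leftrightarrow$ $M\in\mathfrak{F}_R$). Your added $\Ext^1$ sketch is a harmless bonus the paper does not include.
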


As a corollary, we have a characterization for right semi-hereditary rings including \cite[Theorem 2]{megibbenabsolutely}.
\begin{cor}
The following conditions are equivalent for a ring $R$:
\begin{enumerate}
\item[(a)] $R$ is a right semi-hereditary ring;
\item[(b)] Every factor module of any f-injective $R$-module is f-injective;
\item[(c)] Every factor module of any absolutely pure $R$-module is absolutely pure;
\item[(d)] Every factor module of any injective $R$-module is  absolutely pure.
\end{enumerate}
\end{cor}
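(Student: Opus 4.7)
The plan is to deduce the corollary as the special case $M = R_R$ of Theorem~\ref{semifac}, using Lemma~\ref{meggiben} to translate the class $\mathfrak{F}_R$ into the languages of f-injective and absolutely pure modules. By Example~(i) right after the definition of finite $\Sigma$-Rickart modules, condition~(a) says exactly that $R_R$ is a finite $\Sigma$-Rickart module, which is condition~(i) of Theorem~\ref{semifac}. So I would first verify that the hypothesis of the ``in addition'' clause of Theorem~\ref{semifac} applies here, so that conditions~(i), (ii), and (iii) of that theorem are all equivalent: this holds because $R^{(\mathcal{I})}$ is free and $R_R$ is projective, hence $R$ is $R^{(\mathcal{I})}$-projective for every index set $\mathcal{I}$.

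The next step is to translate conditions~(ii) and~(iii) of Theorem~\ref{semifac} into the statements (b), (c), (d). Since $R_R$ is projective, Remark~\ref{mpepisprop}(ii) tells us that every epimorphism of right $R$-modules is an $R$-pure epimorphism; hence ``closed under $R$-pure epimorphisms'' in condition~(ii) is the same as ``closed under factor modules.'' By Lemma~\ref{meggiben}, $\mathfrak{F}_R$ coincides with the class of f-injective modules and with the class of absolutely pure modules. Thus condition~(ii) of Theorem~\ref{semifac} translates simultaneously into both (b) and (c), giving (a)$\Leftrightarrow$(b)$\Leftrightarrow$(c). Similarly, condition~(iii) of Theorem~\ref{semifac} applied with $M=R$ reads: every factor module of an injective (i.e., $R$-injective) module lies in $\mathfrak{F}_R$, i.e., is absolutely pure, which is exactly condition~(d). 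This yields (a)$\Leftrightarrow$(d).

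There is no real obstacle here once the dictionary is set up; the corollary is essentially a direct instantiation of Theorem~\ref{semifac} combined with Megibben's identification of f-injective with absolutely pure. The only mild thing to watch is that the ``$M$-pure epimorphism'' condition collapses to plain epimorphism in the projective case, but this is exactly Remark~\ref{mpepisprop}(ii), so the argument proceeds cleanly with no further work.
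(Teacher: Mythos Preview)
Your proposal is correct and follows exactly the approach implicit in the paper: the corollary is stated immediately after Theorem~\ref{semifac} and Lemma~\ref{meggiben} with no explicit proof, precisely because it is the specialization $M=R_R$ of Theorem~\ref{semifac}, using Lemma~\ref{meggiben} to identify $\mathfrak{F}_R$ with the f-injective (equivalently, absolutely pure) modules and Remark~\ref{mpepisprop}(ii) to collapse $R$-pure epimorphisms to ordinary epimorphisms. Your write-up makes all the bookkeeping explicit and is exactly what is intended.
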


Now, we are going to give a module theoretic version of \cite[Theorem 3.4]{guptaf}.

\begin{thm}\label{semiendo} 
The following conditions are equivalent for a right $R$-module $M$:
\begin{enumerate}
\item[(a)] $M$ is an endoregular module;
\item[(b)] $M$ is a finite $\Sigma$-Rickart module and $M$ is in $\mathfrak{F}_M$;
\item[(c)] $M$ is strongly $D_2$ condition \emph{(}i.e., $M^{(n)}$ has $D_2$ condition for all $n>0$\emph{)} and any finitely $M$-generated submodule of $M^{(n)}$ is a direct summand for all $n>0$.
\end{enumerate}
\end{thm}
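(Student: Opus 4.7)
The plan is to prove the cycle $(a) \Leftrightarrow (b)$ together with $(a) \Rightarrow (c) \Rightarrow (a)$. The main ingredients are Proposition~\ref{ricd2}(ii) (endoregular iff Rickart plus $C_2$), Proposition~\ref{endofm} (endoregular iff $D_2$ plus $\mathfrak{F}_M = \text{Mod-}R$), Proposition~\ref{xiprop}(iii) (if $M \in \mathfrak{F}_M$ then $M$ has $C_2$), together with the fact \cite[Corollary 3.15]{lrr5} (already invoked inside the proof of Proposition~\ref{endofm}) that endoregularity passes to every $M^{(n)}$.

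For $(a) \Rightarrow (b)$: endoregularity of $M$ makes every $M^{(n)}$ endoregular, hence Rickart, so $M$ is finite $\Sigma$-Rickart; Proposition~\ref{endofm} also gives $\mathfrak{F}_M = \text{Mod-}R$, and in particular $M \in \mathfrak{F}_M$. Conversely for $(b) \Rightarrow (a)$: a finite $\Sigma$-Rickart module is Rickart, while $M \in \mathfrak{F}_M$ supplies the $C_2$ condition by Proposition~\ref{xiprop}(iii), so Proposition~\ref{ricd2}(ii) concludes that $M$ is endoregular.

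For $(a) \Rightarrow (c)$: each $M^{(n)}$ is endoregular, hence Rickart, hence satisfies $D_2$, yielding the strong $D_2$ hypothesis. Given a finitely $M$-generated submodule $N \leq M^{(n)}$, composing an epimorphism $M^{(k)} \twoheadrightarrow N$ with the natural surjection $M^{(nk)} = (M^{(n)})^{(k)} \twoheadrightarrow M^{(k)}$ shows that $N$ is also finitely $M^{(n)}$-generated. Now Proposition~\ref{endofm} applied to the endoregular module $M^{(n)}$ yields $\mathfrak{F}_{M^{(n)}} = \text{Mod-}R$, so $N \in \mathfrak{F}_{M^{(n)}}$; extending $\text{id}_N$ along the inclusion $N \hookrightarrow M^{(n)}$ produces a retraction, whence $N \leq^\oplus M^{(n)}$. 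For $(c) \Rightarrow (a)$: any $\varphi \in \End_R(M)$ has $\Img \varphi$ finitely $M$-generated via $\varphi$ itself, so (c) with $n = 1$ gives $\Img \varphi \leq^\oplus M$, and the $D_2$ condition on $M$ then forces $\Ker \varphi \leq^\oplus M$, showing $M$ is Rickart. For $C_2$: if $N \leq M$ with $N \cong M' \leq^\oplus M$, then $M'$ is a quotient of $M$ (via the projection $M \twoheadrightarrow M'$), hence finitely $M$-generated, so $N$ is too, and (c) forces $N \leq^\oplus M$; Proposition~\ref{ricd2}(ii) then delivers endoregularity.

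The main technical observation, though quick, is the passage inside $(a) \Rightarrow (c)$ from ``finitely $M$-generated'' to ``finitely $M^{(n)}$-generated'', which is what permits Proposition~\ref{endofm} to be applied at the level of $M^{(n)}$ rather than only at $M$; everything else is a direct invocation of propositions already established in the paper.
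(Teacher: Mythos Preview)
Your proof is correct and close in spirit to the paper's, with two small variations worth noting. First, the paper closes the cycle via $(c)\Rightarrow(b)$ rather than your $(c)\Rightarrow(a)$: it verifies finite $\Sigma$-Rickart exactly as you verify Rickart (but for all $M^{(n)}$), and then checks $M\in\mathfrak{F}_M$ by observing that any finitely $M$-generated $N\leq M$ is already a summand, so every $\beta:N\to M$ extends trivially; your route via $C_2$ and Proposition~\ref{ricd2}(ii) is equally valid. Second, in $(a)\Rightarrow(c)$ the paper obtains $N\leq^\oplus M^{(n)}$ more directly: it just notes that $N=\Img(j\rho)$ for the composite $j\rho:M^{(\ell)}\to M^{(n)}$, and since $M^{(\ell+n)}$ is endoregular (hence dual Rickart), the image of any such map is a direct summand. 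Your detour through $\mathfrak{F}_{M^{(n)}}=\text{Mod-}R$ and the splitting of $\text{id}_N$ reaches the same conclusion but leans on Proposition~\ref{endofm} a second time rather than invoking the dual Rickart property. Neither difference affects the length or difficulty of the argument.
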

\begin{proof}
(a)$\Leftrightarrow$(b) Let $M$ be an endoregular module. Then by \cite[Corollary 3.15]{lrr5}, $M^{(n)}$ is an endoregular module, which is Rickart.
Thus, $M$ is finite $\Sigma$-Rickart. Also, from Proposition \ref{endofm} $M$ is in $\mathfrak{F}_M$.
Conversely, let $M\in\mathfrak{F}_M$. Then $M$ has $C_2$ condition from Proposition \ref{fmc2}(iii). Hence $M$ is an endoregular module by \cite[Theorem 3.17]{lrr}.

(a)$\Rightarrow$(c) Since each endoregular module is finite $\Sigma$-Rickart, from Theorem \ref{semihered2} it is easy to see that $M$ is strongly $D_2$ condition. Now, let $N$ be a finitely $M$-generated submodule of $M^{(n)}$ for some $n>0$. 
Then there exists an epimorphism $\rho:M^{(\ell)}\to N$ for some $\ell>0$. On the other hand, let $j:N\to M^{(n)}$ be the canonical inclusion. Then there is a homomorphism $j\rho:M^{(\ell)}\to M^{(n)}$. Therefore $\text{Im}j\rho=N$ is a direct summand of $M^{(n)}$.


(c)$\Rightarrow$(b) Let $\varphi:M^{(n)}\to M^{(n)}$ be any endomorphism. By hypothesis $\Img\varphi\dleq M^{(n)}$. Since $M^{(n)}$ has $D_2$ condition, $\Ker\varphi\dleq M^{(n)}$. Thus $M$ is a finite $\Sigma$-Rickart module. 
In addition, let $N$ be a finitely $M$-generated submodule of $M$ and let $\beta:N\to M$ be any homomorphism.  By hypothesis, $N$ is a direct summand of $M$. This implies that $\beta$ can be extended to a homomorphism $\gamma:M\to M$. Thus, $M$ is in $\mathfrak{F}_M$.
\end{proof}

\begin{cor}\label{shabsvn}
The following conditions are equivalent for a ring $R$:
\begin{enumerate}
\item[(a)] $R$ is a von Neumann regular ring;
\item[(b)] $R$ is a right semi-hereditary ring and $R_R$ is an $f$-injective module;
\item[(c)] $R$ is a right semi-hereditary ring and $R_R$ is an absolutely pure module;
\item[(d)] every finitely generated submodule of $R^{(n)}$ is a direct summand for all $n>0$.
\end{enumerate}
\end{cor}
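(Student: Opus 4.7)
The plan is to obtain this corollary as a direct specialization of Theorem \ref{semiendo} to the module $M=R_R$, translating each module-theoretic condition into the corresponding ring-theoretic one. Since $\End_R(R_R)\cong R$, the statement ``$R_R$ is endoregular'' is literally ``$R$ is von Neumann regular'', which gives (a) in both statements.

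For the equivalence (a)$\Leftrightarrow$(b) of the corollary, I would invoke Theorem \ref{semiendo}(b): $R_R$ is finite $\Sigma$-Rickart iff $R$ is right semi-hereditary by Example 2.2(i), and $R_R\in\mathfrak{F}_R$ is, by Remark \ref{frim}(i), exactly the statement that $R_R$ is f-injective. For (b)$\Leftrightarrow$(c) I would just appeal to Lemma \ref{meggiben}, which identifies the classes of f-injective modules and absolutely pure modules.

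For (a)$\Leftrightarrow$(d), I would unpack Theorem \ref{semiendo}(c) with $M=R$. The strongly $D_2$ hypothesis is automatic in this case: each $R^{(n)}$ is a projective module, and projective modules satisfy $D_2$ (as recorded after the definition of $D_2$ in the preliminaries). On the other hand, a submodule of $R^{(n)}$ is finitely $R$-generated precisely when it is finitely generated in the usual sense. Therefore Theorem \ref{semiendo}(c) collapses to the single requirement that every finitely generated submodule of $R^{(n)}$ be a direct summand, which is exactly (d).

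There is no real obstacle here; the work is essentially bookkeeping. The only point that requires a small observation is the automatic validity of the strongly $D_2$ condition for $M=R$, which is what allows us to drop one of the two clauses in Theorem \ref{semiendo}(c) when specializing to the ring case. Once this is noted, the four ring-theoretic conditions correspond line by line to the three module-theoretic conditions of Theorem \ref{semiendo} (together with Lemma \ref{meggiben}), and the corollary follows.
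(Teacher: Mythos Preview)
Your proposal is correct and is essentially the intended argument: the corollary is placed immediately after Theorem~\ref{semiendo} with no separate proof, and your specialization $M=R_R$ together with Lemma~\ref{meggiben} and the observation that $R^{(n)}$ is projective (hence strongly $D_2$) is exactly how the paper expects the reader to obtain it.
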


\section{$M$-coherent modules and the endomorphism ring of a finite $\Sigma$-Rickart module}



The next result can be seen as a generalization of Schanuel's Lemma \cite[5.1]{l}.

\begin{lem}\label{schanuel}
Let $M$ be a right $R$-module and $K\in\add(M)$. Let
\[0\longrightarrow D\overset{\sigma}{\longrightarrow} K \overset{\rho}{\longrightarrow} C\longrightarrow 0\]
\[0\longrightarrow A\overset{\alpha}{\longrightarrow} B\overset{\beta}{\longrightarrow} C\longrightarrow 0\]
be short exact sequences with $\beta$ an $M$-pure epimorphism. Then there exists a short exact sequence 
\[0\longrightarrow D\overset{\delta}{\longrightarrow} K\oplus A\overset{\eta}{\longrightarrow} B\longrightarrow 0.\]
Moreover, if $\rho$ is also an $M$-pure epimorphism, then so is $\eta$.
\end{lem}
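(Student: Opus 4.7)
The plan is to mirror the classical proof of Schanuel's Lemma, with the role of projectivity of $K$ replaced by the fact that $K\in\add(M)$ together with the $M$-purity of $\beta$. By Remark~\ref{mpepisprop}(i), $\beta$ is in fact $K$-pure, so the epimorphism $\rho:K\to C$ lifts through $\beta$: there exists $\psi:K\to B$ with $\beta\psi=\rho$.

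With this lift in hand, I would define $\eta:K\oplus A\to B$ by $\eta(k,a)=\psi(k)+\alpha(a)$. Its surjectivity is immediate: given $b\in B$, pick $k\in K$ with $\rho(k)=\beta(b)$; then $b-\psi(k)\in\ker\beta=\Img\alpha$, so $b-\psi(k)=\alpha(a)$ for some $a\in A$, whence $b=\eta(k,a)$. To define $\delta:D\to K\oplus A$, I would note that for each $d\in D$ one has $\beta\psi\sigma(d)=\rho\sigma(d)=0$, so $\psi\sigma(d)\in\Img\alpha$; since $\alpha$ is injective there is a unique $a_d\in A$ with $\alpha(a_d)=-\psi\sigma(d)$, and I set $\delta(d)=(\sigma(d),a_d)$. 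Injectivity of $\delta$ follows from that of $\sigma$, while a direct diagram chase (if $(k,a)\in\ker\eta$ then $\rho(k)=\beta\psi(k)=-\beta\alpha(a)=0$, forcing $k=\sigma(d)$ for a unique $d$ and then $a=a_d$) shows $\Img\delta=\ker\eta$, yielding the desired short exact sequence.

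For the \emph{moreover} assertion, assume $\rho$ is also $M$-pure and let $f:M\to B$ be arbitrary. The composition $\beta f:M\to C$ lifts through $\rho$ to some $h:M\to K$; then $f-\psi h$ has image in $\ker\beta=\Img\alpha$, producing (via $\alpha$ monic) a homomorphism $j:M\to A$ with $\alpha j=f-\psi h$. The map $g=(h,j):M\to K\oplus A$ then satisfies $\eta g=\psi h+\alpha j=f$, establishing the $M$-purity of $\eta$.

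The only point requiring real attention is the existence of the initial lift $\psi$: the hypothesis supplies $M$-purity of $\beta$, whereas what is needed is the ability to lift a map out of $K$, and Remark~\ref{mpepisprop}(i) is exactly what promotes $M$-purity to $K$-purity for every $K\in\add(M)$. Once this bridge is in place the remainder of the argument is a routine diagram chase, and I do not anticipate further obstacles.
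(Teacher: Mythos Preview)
Your proof is correct and follows essentially the same route as the paper: both lift $\rho$ through $\beta$ via Remark~\ref{mpepisprop}(i), then build $\eta$ and $\delta$ explicitly and verify exactness and $M$-purity by direct chase. The only cosmetic differences are that the paper writes $\eta(k,x)=\gamma(k)-x$ on $K\oplus\Img\alpha$ (and then identifies $\Img\alpha\cong A$) whereas you work directly on $K\oplus A$ with $\eta(k,a)=\psi(k)+\alpha(a)$; the arguments are otherwise identical.
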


\begin{proof}
Consider the following diagram:
\[\xymatrix{0 \ar[r] & D\ar[r]^{\sigma} & K \ar[r]^\rho \ar@{--{>}}[d]^\gamma & C \ar[r] \ar[d]^= & 0 \\ 
0 \ar[r] & A \ar[r]_\alpha & B \ar[r]_\beta & C \ar[r] & 0.}\]
Since $K\in\add(M)$ and $\beta$ is an $M$-pure epimorphism, there exists $\gamma:K\to B$ such that $\rho=\beta\gamma$. We claim that the following sequence is exact
\begin{equation}\label{sequence}
\xymatrix{0\ar[r] & D\ar[r]^-\delta & K\oplus \Img\alpha \ar[r]^-\eta & B\ar[r] & 0}
\end{equation}
where $\delta(d)=(\sigma(d), \gamma(\sigma(d)))$ and $\eta(k,x)=\gamma(k)-x$ for $d\in D$, $k\in K$ and $x\in \Img\alpha$: 
It is clear that $\gamma(\sigma(d))\in\Img\alpha$, $\delta$ is a monomorphism, and $\eta\delta=0$. Let $(k,x)\in K \oplus \Img\alpha$ such that $\eta(k,x)=0$. That is, $\gamma(k)=x$. Then $0=\beta(x)=\beta(\gamma(k))=\rho(k)$.
Since $\Img\sigma=\Ker\rho$ there exists $d\in D$ such that $\sigma(d)=k$. Thus $\delta(d)=(\sigma(d),\gamma(\sigma(d)))=(k,\gamma(k))=(k,x)$. Hence $\Img\delta=\Ker\eta$.
Now, it remains to show that $\eta$ is an epimorphism. Let $b\in B$. Since $\rho$ is an epimorphism, there exists $\ell\in K$ such that $\rho(\ell)=\beta(b)$. Hence $\beta(\gamma(\ell)-b)=0$, that is, there exists $a\in A$ such that $\alpha(a)=\gamma(\ell)-b$. Thus $\eta(\ell,\alpha(a))=b$, proving the claim. Since $A\cong\Img\alpha$, we have an exact sequence
\[0\to D\to K\oplus A\to B\to 0.\]
Moreover, suppose $\rho$ is also an $M$-pure epimorphism. Let $\zeta:M\to B$ be any homomorphism. Then, $\beta\zeta:M\to C$. Since $\rho$ is an $M$-pure epimorphism, there exist $\varphi:M\to K$ such that $\rho\varphi=\beta\zeta$. This implies that $\beta\gamma\varphi=\beta\zeta$ and so $\gamma\varphi(m)-\zeta(m)\in\Ker\beta=\Img\alpha$ for all $m\in M$. Define $\overline{\varphi}:M\to K\oplus\Img\alpha$ as $\overline{\varphi}(m)=(\varphi(m),\gamma\varphi(m)-\zeta(m))$. It is clear that $\eta\overline{\varphi}=\zeta$.
\end{proof}

\begin{lem}\label{mgenext}
Let $M$ be a right $R$-module and 
\[0\longrightarrow A\overset{\alpha}{\longrightarrow} B\overset{\beta}{\longrightarrow} C\longrightarrow 0\]
be an exact sequence with $A$ and $C$ finitely $M$-generated modules. If $\beta$ is an $M$-pure epimorphism then $B$ is finitely $M$-generated.
\end{lem}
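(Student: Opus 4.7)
The plan is to reduce this to a direct application of the Schanuel-type lemma just proved (Lemma \ref{schanuel}), using the finite $M$-generation of $C$ to manufacture the first short exact sequence required by that lemma.

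First, because $C$ is finitely $M$-generated, pick an epimorphism $\rho : M^{(n)} \to C$ for some $n > 0$, and let $D = \Ker \rho$. This yields a short exact sequence
\[
0 \longrightarrow D \longrightarrow M^{(n)} \xrightarrow{\rho} C \longrightarrow 0,
\]
with $M^{(n)} \in \add(M)$. Alongside the given sequence
\[
0 \longrightarrow A \xrightarrow{\alpha} B \xrightarrow{\beta} C \longrightarrow 0,
\]
in which $\beta$ is $M$-pure by hypothesis, Lemma \ref{schanuel} applies and produces a short exact sequence
\[
0 \longrightarrow D \longrightarrow M^{(n)} \oplus A \xrightarrow{\eta} B \longrightarrow 0.
\]
In particular, $\eta$ is an epimorphism of $M^{(n)} \oplus A$ onto $B$.

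Next, since $A$ is finitely $M$-generated, choose an epimorphism $\tau : M^{(m)} \to A$ for some $m > 0$. Then $\mathrm{id}_{M^{(n)}} \oplus \tau : M^{(n)} \oplus M^{(m)} \to M^{(n)} \oplus A$ is an epimorphism, so the composition
\[
M^{(n+m)} \;\cong\; M^{(n)} \oplus M^{(m)} \xrightarrow{\mathrm{id} \oplus \tau} M^{(n)} \oplus A \xrightarrow{\eta} B
\]
is an epimorphism from $M^{(n+m)}$ onto $B$. Therefore $B$ is finitely $M$-generated, as desired.

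The only conceptual step is identifying that Lemma \ref{schanuel} is exactly the right tool; once this recognition is made, the argument is essentially routine. I expect no real obstacle beyond checking that the $M$-purity hypothesis on $\beta$ is the hypothesis demanded by Lemma \ref{schanuel} (it is, verbatim), and that $M^{(n)} \in \add(M)$ so the lemma can indeed be invoked with $K = M^{(n)}$.
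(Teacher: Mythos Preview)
Your proof is correct. The paper takes a slightly different route: instead of invoking Lemma~\ref{schanuel}, it builds the pullback $P=\{(b,m)\in B\oplus M^{(n)}\mid \beta(b)=\rho(m)\}$ directly, observes that the projection $P\to M^{(n)}$ has kernel $\Ker\beta\oplus 0\cong A$ and splits (using that $\beta$ is $M$-pure), so that $P\cong A\oplus M^{(n)}$; then the other projection $P\to B$ is an epimorphism, and finite $M$-generation of $B$ follows. Your argument packages exactly this pullback step inside Lemma~\ref{schanuel}, which is cleaner since that lemma has already been proved; the paper's version is self-contained but essentially repeats the Schanuel construction in a special case. Neither approach has any real advantage over the other---they are the same idea presented at different levels of abstraction.
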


\begin{proof}
Since $C$ is finitely $M$-generated, 
there exists an epimorphism $\rho:M^{(n)}\to C$ for some $n\in\mathbb{N}$. Consider the pull-back $P$ of $(\beta,\rho)$, that is, $P=\{(b,m)\in B\oplus M^{(n)}\mid \beta(b)=\rho(m)\}$:
\[\xymatrix{ & & P\ar[d]_{\rho'} \ar[r]^{\beta'} & M^{(n)}\ar[d]^\rho & \\ 0\ar[r] & A \ar[r]_\alpha & B \ar[r]_\beta & C\ar[r] & 0.}\]
Note that both $\rho'$ and $\beta'$ are epimorphisms. On the other hand, $\Ker\beta'=\Ker\beta\oplus 0\cong A$. Since $\beta$ is an $M$-pure epimorphism, there exists $\kappa:M^{(n)}\to B$. Therefore, $\beta'$ splits. Hence $P\cong A\oplus M^{(n)}$. This implies that $P$ is finitely $M$-generated because $A$ is finitely $M$-generated.  Since $\rho'$ is an epimorphism, $B$ is finitely $M$-generated.
\end{proof}

\begin{prop}\label{kermpre1}
Let $C$ be a module such that there exists an exact sequence $0\to D\to M^{(n)}\overset{\pi}{\rightarrow} C\to 0$ with $\pi$ an $M$-pure epimorphism and $D$ finitely $M$-generated. If $\rho:B\to C$ is an $M$-pure epimorphism with $B$ finitely $M$-generated, then $\Ker\rho$ is finitely $M$-generated.
\end{prop}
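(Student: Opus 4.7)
The plan is to stack the two given short exact sequences via the Schanuel-type lemma (Lemma \ref{schanuel}) and then apply Lemma \ref{mgenext}. Specifically, I would start with the two exact sequences
\[0\longrightarrow D\longrightarrow M^{(n)}\overset{\pi}{\longrightarrow} C\longrightarrow 0\qquad\text{and}\qquad 0\longrightarrow \Ker\rho\longrightarrow B\overset{\rho}{\longrightarrow} C\longrightarrow 0,\]
noting that $M^{(n)}\in\add(M)$ and that $\rho$ is an $M$-pure epimorphism, so the hypotheses of Lemma \ref{schanuel} are satisfied.

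Next, since $\pi$ is also an $M$-pure epimorphism, the ``moreover'' clause of Lemma \ref{schanuel} provides an exact sequence
\[0\longrightarrow D\longrightarrow M^{(n)}\oplus \Ker\rho\overset{\eta}{\longrightarrow} B\longrightarrow 0\]
in which $\eta$ is an $M$-pure epimorphism. Here both the left-hand term $D$ (by hypothesis) and the right-hand term $B$ (by hypothesis) are finitely $M$-generated.

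Now I would invoke Lemma \ref{mgenext} applied to this new sequence: with the two outer terms finitely $M$-generated and the surjection $M$-pure, the middle term $M^{(n)}\oplus \Ker\rho$ must itself be finitely $M$-generated. Composing a finite $M$-generation $M^{(k)}\twoheadrightarrow M^{(n)}\oplus\Ker\rho$ with the canonical projection onto the second summand yields an epimorphism $M^{(k)}\twoheadrightarrow \Ker\rho$, so $\Ker\rho$ is finitely $M$-generated, as required.

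The argument is essentially a two-step assembly, so I do not anticipate any real obstacle; the only place where care is needed is to confirm that the morphism $\eta$ produced by Lemma \ref{schanuel} is indeed $M$-pure (so that Lemma \ref{mgenext} is applicable), which is exactly the content of the final clause of that lemma and is why both hypotheses ``$\pi$ is $M$-pure'' and ``$\rho$ is $M$-pure'' are used.
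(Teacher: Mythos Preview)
Your proof is correct and follows essentially the same route as the paper: apply Lemma~\ref{schanuel} to obtain the $M$-pure exact sequence $0\to D\to M^{(n)}\oplus\Ker\rho\to B\to 0$, then invoke Lemma~\ref{mgenext} to conclude that the middle term, and hence its direct summand $\Ker\rho$, is finitely $M$-generated. The only difference is that you spell out the final projection step explicitly, whereas the paper leaves it implicit.
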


\begin{proof}
Consider the exact sequence
\[0\to D\to M^{(n)}\overset{\pi}{\rightarrow} C\to 0\]
with $D$ finitely $M$-generated. Since $\rho$ and $\pi$ are $M$-pure epimorphisms, by Lemma \ref{schanuel} we get an exact sequence
\[0\to D\to M^{(n)}\oplus \Ker\rho\overset{\eta}{\rightarrow} B\to 0\]
with $\eta$ an $M$-pure epimorphism. From Lemma \ref{mgenext}, $\Ker\rho$ is finitely $M$-generated.
\end{proof}

%

Recall that a right $R$-module $M$ is said to be \emph{intrinsically projective} if for every diagram
\[\xymatrix{ & M\ar[d]^\beta \ar@{--{>}}[dl]_{\gamma} & \\ M^{(n)}\ar[r]_\alpha & N\ar[r] & 0}\]
with $n>0$ and $N\leq M$, there exists $\gamma:M\to M^{(n)}$ such that $\alpha\gamma=\beta$ (see \cite{wr}).
 Note that every finite $\Sigma$-Rickart module and every quasi-projective module is intrinsically projective.
In addition, a right $R$-module $M$ is intrinsically projective  if and only if $I=\Hom_R(M,IM)$ for all finitely generated right ideals $I\leq \End_R(M)$ (\cite[5.7]{wr}).

\begin{lem}\label{inproympe}
Let $M$ be an intrinsically projective module. Then the following statements hold true:
\begin{enumerate}
\item[(i)] Any epimorphism $\rho:L\to C$, with $C\leq M$ and $L$ finitely $M$-generated, is an $M$-pure epimorphism.
\item[(ii)] For any finitely $M$-generated submodule $N$ of $M^{(n)}$ with any $n\in\mathbb{N}$, $\Hom_R(M,N)$ is a finitely generated right $S$-module where $S=\End_R(M)$.
\end{enumerate}
\end{lem}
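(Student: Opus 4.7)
My plan is to prove (i) by a direct factoring argument and (ii) by induction on $n$ that uses (i) in the inductive step.

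For (i), the finite $M$-generation of $L$ gives an epimorphism $\sigma\colon M^{(k)}\to L$ for some $k$. Given any $\beta\colon M\to C$, the composite $\rho\sigma\colon M^{(k)}\to C$ is an epimorphism onto the submodule $C\leq M$, so intrinsic projectivity supplies $\gamma'\colon M\to M^{(k)}$ with $\rho\sigma\gamma'=\beta$. Setting $\gamma=\sigma\gamma'$ then yields $\rho\gamma=\beta$, which exhibits $\rho$ as an $M$-pure epimorphism.

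For (ii), I induct on $n$. The base case $n=1$ is the Wisbauer characterization \cite[5.7]{wr}: if $N\leq M$ is finitely $M$-generated with generators $\beta_1,\dots,\beta_k\in S$, then the finitely generated right ideal $I=\sum_j\beta_j S$ satisfies $IM=N$, and so $\Hom_R(M,N)=\Hom_R(M,IM)=I$, which is finitely generated. For the inductive step, write $M^{(n+1)}=M\oplus M^{(n)}$ with projection $\pi$ onto the first factor, and set $N_1=\pi(N)\leq M$ and $N_0=N\cap\Ker\pi\leq M^{(n)}$. Since $N$ is finitely $M$-generated and $N_1\leq M$, part (i) makes $\pi|_N\colon N\to N_1$ an $M$-pure epimorphism, and applying $\Hom_R(M,-)$ to the short exact sequence $0\to N_0\to N\to N_1\to 0$ gives an exact sequence of right $S$-modules
\[
0\longrightarrow\Hom_R(M,N_0)\longrightarrow\Hom_R(M,N)\longrightarrow\Hom_R(M,N_1)\longrightarrow0.
\]
Here $\Hom_R(M,N_1)$ is finitely generated by the base case, so it suffices to show $\Hom_R(M,N_0)$ is finitely generated; this I plan to deduce from the inductive hypothesis once I verify that $N_0$ is itself finitely $M$-generated.

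The main obstacle is exhibiting an explicit finite $M$-generating set for $N_0$. The plan is: starting from generators $\beta_1,\dots,\beta_k$ of $N$ and $M$-pure lifts $\tilde{\gamma}_i\in\Hom_R(M,N)$ of generators $\gamma_1,\dots,\gamma_\ell$ of $N_1$, use intrinsic projectivity to write each $\pi\beta_j=\sum_i\gamma_i s_{ji}$ for suitable $s_{ji}\in S$, and then form the corrected maps $\beta_j'=\beta_j-\sum_i\tilde{\gamma}_i s_{ji}\in\Hom_R(M,N_0)$. The remaining verification that $\{\beta_j'\}$ $M$-generates $N_0$ is the delicate point: expanding an arbitrary $x\in N_0$ as $x=\sum_j\beta_j(m_j)$ rewrites to $\sum_j\beta_j'(m_j)+\sum_i\tilde{\gamma}_i\bigl(\sum_j s_{ji}(m_j)\bigr)$ with the residual term constrained by $\sum_i\gamma_i\bigl(\sum_j s_{ji}(m_j)\bigr)=0$, and absorbing this residual into the $S$-span of the $\beta_j'$'s---likely via a second pass through intrinsic projectivity on this syzygy relation---will be the technical heart of the proof.
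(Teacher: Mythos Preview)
Your argument for (i) is correct and coincides with the paper's.

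For (ii), however, your inductive route runs into a genuine obstruction. The plan hinges on showing that $N_0=N\cap(0\oplus M^{(n)})$ is finitely $M$-generated so that the inductive hypothesis applies, but this fails in general. Already for $M=R_R$ (which is trivially intrinsically projective) one can take a non-coherent commutative ring such as $R=k[x,y_1,y_2,\dots]/(xy_i:i\geq1)$ and the cyclic submodule $N=(x,1)R\leq R^{(2)}$. Then $N$ is finitely $M$-generated and $N_1=\pi(N)=xR$ is principal, yet $N_0=0\oplus\{r\in R: xr=0\}=0\oplus(y_1,y_2,\dots)$ is not finitely generated; hence $\Hom_R(M,N_0)\cong N_0$ is not a finitely generated $S$-module even though $\Hom_R(M,N)\cong N$ is. Your proposed ``second pass through intrinsic projectivity'' cannot repair this: controlling the residual $\sum_i\tilde\gamma_i(u_i)$ subject to $\sum_i\gamma_i(u_i)=0$ amounts to controlling the syzygy module $\Ker(\gamma_1,\dots,\gamma_\ell)\leq M^{(\ell)}$, and its finite $M$-generation is precisely the $M$-coherence condition that is \emph{not} assumed here. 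So while your short exact sequence in $\Hom_R(M,-)$ is correct, it traps $\Hom_R(M,N)$ between terms one of which you cannot bound.

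The paper avoids this by arguing directly rather than by induction on $n$. After padding so that both $N\leq M^{(\ell)}$ and the generating epimorphism $\rho\colon M^{(\ell)}\to N$ live over a common power $M^{(\ell)}$, it shows that $\Hom_R(M,N)$ is generated over $S$ by $\rho\eta_1,\dots,\rho\eta_\ell$ (with $\eta_i\colon M\hookrightarrow M^{(\ell)}$ the canonical inclusions): given $\varphi\colon M\to N$, intrinsic projectivity is applied to each coordinate epimorphism $\pi_i\rho\colon M^{(\ell)}\to\pi_i(N)\leq M$ to produce a lift $\gamma\colon M\to M^{(\ell)}$ with $\rho\gamma=\varphi$, whence $\varphi=\sum_i(\rho\eta_i)(\pi_i\gamma)$. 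The essential difference is that this exhibits $\Hom_R(M,N)$ as a \emph{quotient} of $S^{(\ell)}$ via $\rho_\ast$, rather than as the middle of an extension whose kernel term is uncontrolled.
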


\begin{proof}
(i) Let $\rho:L\to C$ be any epimorphism with $C\leq M$ and $L$ finitely $M$-generated. Let $\alpha:M\to C$ be any homomorphism. Since $L$ is finitely $M$-generated there exists an epimorphism $\beta:M^{(n)}\to L$ for some $n>0$. Since $M$ is intrinsically projective, there exists $\gamma:M\to M^{(n)}$ such that $\alpha=(\rho\beta)\gamma=\rho(\beta\gamma)$.

(ii) Let $N\leq M^{(n)}$ be finitely $M$-generated. Hence there exist an integer $k>0$ and an epimorphism $\rho:M^{(k)}\to N$. Let $\ell=\text{max}\{k,n\}$, then we can see $\rho:M^{(\ell)}\to N$ and $N\leq M^{(\ell)}$. 
Let $\pi_i:M^{(\ell)}\to M$ denote the canonical projection for each $1\leq i\leq \ell$. Let $\varphi:M\to N$ be any homomorphism and consider the epimorphisms $\pi_i\rho:M^{(\ell)}\to \pi_i(N)$ for $1\leq i\leq\ell$. Since $M$ is intrinsically projective there exists $\gamma_i:M\to M^{(\ell)}$ such that $\pi_i\rho\gamma_i=\pi_i\varphi$ for all $1\leq i\leq \ell$. Define $\gamma:M\to M^{(\ell)}$ as $\gamma(m)=(\gamma_1(m),\dots,\gamma_\ell(m))$. Hence $\rho\gamma=\varphi$. Let $\eta_i:M\to M^{(\ell)}$ denote the canonical inclusion for each $1\leq i\leq \ell$. Then
\[\varphi=\rho\gamma=\sum_{i=1}^\ell(\rho\eta_i)\pi_i\gamma ~\text{for}~ \pi_i\gamma\in S.\]
Thus, $\Hom_R(M,N)$ is generated by $\rho\eta_1,\rho\eta_2,\dots,\rho\eta_\ell$.
\end{proof}

For a right $R$-module $M$, a right $R$-module $N$ is said to be \emph{finitely} $M$-\emph{presented} if there exists an exact sequence 
$M^{(\ell)}\to M^{(n)}\to N\to 0$ for some $n,\ell>0$ 
(\cite{wisbauerfoundations}).

\begin{lem}\label{kermpre}
Let $M$ be an intrinsically projective module and $C$ be a finitely $M$-presented submodule of $M$. If $\rho:B\to C$ is an epimorphism with $B$ finitely $M$-generated, then $\Ker\rho$ is finitely $M$-generated.
\end{lem}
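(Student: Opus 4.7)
The plan is to reduce Lemma \ref{kermpre} to Proposition \ref{kermpre1} by upgrading the ordinary epimorphisms available to us into $M$-pure epimorphisms using the intrinsic projectivity of $M$, and then supplying the required finitely $M$-generated resolution coming from the finitely $M$-presented hypothesis.

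First I would unpack the hypothesis that $C$ is finitely $M$-presented: by definition there is an exact sequence $M^{(\ell)} \xrightarrow{\sigma} M^{(n)} \xrightarrow{\pi} C \to 0$ for some $n,\ell>0$. Setting $D := \Ker\pi$, the image of $\sigma$ equals $D$, so the epimorphism $\sigma:M^{(\ell)} \twoheadrightarrow D$ exhibits $D$ as a finitely $M$-generated module. Thus we have an honest short exact sequence
\[
0 \longrightarrow D \longrightarrow M^{(n)} \overset{\pi}{\longrightarrow} C \longrightarrow 0
\]
with $D$ finitely $M$-generated.

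Next I would verify that both $\pi$ and $\rho$ are $M$-pure epimorphisms. This is where intrinsic projectivity enters: since $C \leq M$, $M^{(n)}$ is finitely $M$-generated (indeed free of finite rank over $M$), and $M$ is intrinsically projective, Lemma \ref{inproympe}(i) immediately gives that $\pi:M^{(n)} \to C$ is an $M$-pure epimorphism. The same lemma, applied to $\rho:B \to C$ with $B$ finitely $M$-generated and $C \leq M$, yields that $\rho$ is an $M$-pure epimorphism as well.

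With these two facts in hand, the hypotheses of Proposition \ref{kermpre1} are satisfied verbatim: we have an exact sequence $0 \to D \to M^{(n)} \xrightarrow{\pi} C \to 0$ with $\pi$ an $M$-pure epimorphism and $D$ finitely $M$-generated, and $\rho:B \to C$ is an $M$-pure epimorphism with $B$ finitely $M$-generated. Proposition \ref{kermpre1} then concludes that $\Ker\rho$ is finitely $M$-generated, completing the proof. The only mildly subtle point is the observation that the containment $C \leq M$ is precisely what allows Lemma \ref{inproympe}(i) to promote both epimorphisms to $M$-pure epimorphisms; without that hypothesis intrinsic projectivity would not suffice.
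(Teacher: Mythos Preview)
Your proof is correct and follows essentially the same approach as the paper: extract the short exact sequence $0\to D\to M^{(n)}\xrightarrow{\pi}C\to 0$ with $D$ finitely $M$-generated from the finitely $M$-presented hypothesis, invoke Lemma~\ref{inproympe}(i) to make both $\pi$ and $\rho$ into $M$-pure epimorphisms via $C\leq M$, and then apply Proposition~\ref{kermpre1}. Your write-up is simply more explicit about unpacking the definitions.
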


\begin{proof}
Since $C$ is finitely $M$-presented there exists an exact sequence
\[0\to D\to M^{(n)}\overset{\pi}{\rightarrow} C\to 0\]
with $D$ finitely $M$-generated. Note that $\rho$ and $\pi$ are $M$-pure epimorphisms from Lemma \ref{inproympe}(i). Therefore, the result follows from Proposition \ref{kermpre1}.
\end{proof}

%

\begin{prop}\label{intersum}
Let $M$ be an intrinsically projective module and $A,B\leq M$ be finitely $M$-presented submodules. Consider the following exact sequence
\[0\longrightarrow A\cap B \longrightarrow A\oplus B\overset{\pi}{\longrightarrow} A+B\longrightarrow 0.\]
Then $A+B$ is finitely $M$-presented if and only if $A\cap B$ is finitely $M$-generated.
\end{prop}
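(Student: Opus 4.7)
I would prove both directions separately. For ($\Rightarrow$), suppose $A+B$ is finitely $M$-presented. Since $A$ and $B$ are finitely $M$-generated, so is $A\oplus B$, and the canonical sum map $\pi : A\oplus B\to A+B$ is an epimorphism whose target sits inside $M$ as a finitely $M$-presented submodule. Applying Lemma \ref{kermpre} directly gives that $\Ker\pi\cong A\cap B$ is finitely $M$-generated.

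For ($\Leftarrow$), suppose $A\cap B$ is finitely $M$-generated. Take finite $M$-presentations
\[M^{(\ell_A)}\overset{\psi_A}{\longrightarrow} M^{(n_A)}\overset{\rho_A}{\longrightarrow} A\longrightarrow 0,\qquad M^{(\ell_B)}\overset{\psi_B}{\longrightarrow} M^{(n_B)}\overset{\rho_B}{\longrightarrow} B\longrightarrow 0,\]
and combine them into the exact sequence $M^{(\ell_A+\ell_B)}\overset{\psi_A\oplus\psi_B}{\longrightarrow}M^{(n_A+n_B)}\overset{\bar\rho}{\longrightarrow}A\oplus B\longrightarrow 0$ with $\bar\rho=\rho_A\oplus\rho_B$. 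Post-composing with $\pi$ yields an epimorphism $\pi\bar\rho:M^{(n_A+n_B)}\to A+B$ whose kernel equals $\bar\rho^{-1}(A\cap B)$. Producing a finite $M$-presentation of $A+B$ therefore reduces to showing this kernel is finitely $M$-generated. To do so, I would examine
\[0\longrightarrow \Ker\bar\rho\longrightarrow \bar\rho^{-1}(A\cap B)\longrightarrow A\cap B\longrightarrow 0,\]
whose outer terms are finitely $M$-generated ($\Ker\bar\rho=\Img(\psi_A\oplus\psi_B)$ by construction, $A\cap B$ by hypothesis), and appeal to Lemma \ref{mgenext} once the right-hand epimorphism is shown to be $M$-pure.

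The main obstacle is precisely this $M$-purity. Because $A\oplus B$ need not embed in $M$, Lemma \ref{inproympe}(i) does not apply to $\bar\rho$ as a whole; the remedy is to split along the direct-sum decomposition. Since $A\leq M$ and $B\leq M$, Lemma \ref{inproympe}(i) guarantees that $\rho_A$ and $\rho_B$ are individually $M$-pure. Given any $\alpha:M\to A\cap B$, I would lift $\iota_A\alpha:M\to A$ through $\rho_A$ and $\iota_B\alpha:M\to B$ through $\rho_B$ to produce $\beta_A:M\to M^{(n_A)}$ and $\beta_B:M\to M^{(n_B)}$; the pair $(\beta_A,\beta_B):M\to M^{(n_A+n_B)}$ will then land in $\bar\rho^{-1}(A\cap B)$ under the diagonal embedding $A\cap B\hookrightarrow A\oplus B$ and lift $\alpha$, supplying the required $M$-purity and completing the proof.
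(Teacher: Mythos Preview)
Your proof is correct, and the $(\Rightarrow)$ direction is identical to the paper's. For $(\Leftarrow)$ you take a genuinely different route. The paper also forms the composite $\pi\rho:M^{(n_A+n_B)}\to A+B$ and aims to show its kernel is finitely $M$-generated, but it does so via the generalized Schanuel Lemma \ref{schanuel}: applying it to the two sequences $0\to A\cap B\to A\oplus B\to A+B\to 0$ and $0\to\Ker\pi\rho\to M^{(n_A+n_B)}\to A+B\to 0$ yields a short exact sequence $0\to\Ker\pi\rho\to(A\cap B)\oplus M^{(n_A+n_B)}\to A\oplus B\to 0$ with $M$-pure right map, and then Proposition \ref{kermpre1} finishes. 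You instead sandwich $\Ker\pi\bar\rho$ directly in $0\to\Ker\bar\rho\to\Ker\pi\bar\rho\to A\cap B\to 0$ and invoke Lemma \ref{mgenext}. Your argument is more elementary in that it bypasses Schanuel entirely; the paper's route is more systematic in that it reuses machinery already set up. One simplification available to you: rather than lifting coordinatewise by hand, you could note (exactly as the paper does for $\rho$) that $\bar\rho=\rho_A\oplus\rho_B$ is $M$-pure by Lemma \ref{inproympe}(i) and Remark \ref{mpepisprop}(iii), and then observe that the restriction of an $M$-pure epimorphism to the preimage of any submodule is again $M$-pure. Also be mindful of the sign convention: with $\pi(a,b)=a+b$ the kernel is the antidiagonal, so the lift should be $(\beta_A,-\beta_B)$.
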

\begin{proof}
Let $A$ and $B$ be finitely $M$-presented submodules of $M$. Then there exist epimorphisms $\rho_1:M^{(n_1)}\to A$ and $\rho_2:M^{(n_2)}\to B$ for some $n_1, n_2\in\mathbb{N}$. So, $\rho=\rho_1\oplus\rho_2:M^{(n_1)}\oplus M^{(n_2)}\to A\oplus B$ is an epimorphism. That is, $A\oplus B$ is finitely $M$-generated. 

Suppose $A+B$ is finitely $M$-presented. Since $M$ is intrinsically projective and $A\oplus B$ is finitely $M$-generated, $A\cap B$ is finitely $M$-generated by Lemma \ref{kermpre}.

Conversely, since $A$ and $B$ are finitely $M$-presented, there is an exact sequence
\[0\to \Ker\rho \to M^{(n_1)}\oplus M^{(n_2)}\overset{\rho}{\rightarrow} A\oplus B \to 0\]
with $\Ker\rho$ finitely $M$-generated and $\rho=\rho_1\oplus\rho_2$ an $M$-pure epimorphism by Lemma \ref{inproympe}(i) and Remark \ref{mpepisprop}(iii). Consider the exact sequence
\[0\to \Ker\pi\rho\to M^{(n_1+n_2)}\overset{\pi\rho}{\rightarrow} A+B\to 0.\]
Note that $\pi$ is an $M$-pure epimorphism by Lemma \ref{inproympe}(i) because $A\oplus B$ is finitely $M$-generated. Hence from Lemma \ref{schanuel}, we have an exact sequence
\[0\to \Ker\pi\rho \to (A\cap B)\oplus M^{(n_1+n_2)}\overset{\eta}{\rightarrow} A\oplus B\to 0\]
with $\eta$ an $M$-pure epimorphism. Since $A\cap B$ is finitely $M$-generated, $(A\cap B)\oplus M^{(n_1+n_2)}$ is finitely $M$-generated. 
Also since $\Ker\rho$ is finitely $M$-generated, $\Ker\eta\cong\Ker\pi\rho$ is finitely $M$-generated by Proposition \ref{kermpre1}. This implies that $A+B$ is finitely $M$-presented. 
\end{proof}

\begin{defn}
Let $M$ be a right $R$-module and $N$ be a finitely $M$-generated module. The module $N$ is called $M$-\emph{coherent} if for any $n>0$ and every homomorphism $\rho:M^{(n)}\to N$, $\Ker\rho$ is finitely $M$-generated.
\end{defn}

Remark that if a right $R$-module $M$ is $M$-coherent then $_SM$ is flat where $S=\End_R(M)$. Also, a ring $R$ is said to be \emph{right coherent} if $R_R$ is $R$-coherent. In addition,  $M$ is a coherent right $R$-module if and only if $M$ is an $R$-coherent right $R$-module (\cite[4G]{l}).

\begin{lem}\label{cohepikfm} Let $M$ be an $M$-coherent module.  If $\rho:B\to C$ is an epimorphism with $B$ finitely $M$-generated and $C\leq M$, then $\Ker\rho$ is finitely $M$-generated.
\end{lem}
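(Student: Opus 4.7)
The plan is to reduce the statement directly to the defining property of $M$-coherence by lifting $B$ along an epimorphism from a free-type object $M^{(k)}$ and then composing with the inclusion $C\hookrightarrow M$.

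First I would exploit that $B$ is finitely $M$-generated to fix an epimorphism $\beta:M^{(k)}\to B$ for some $k>0$. Let $\iota:C\to M$ denote the inclusion, so that the composite $\iota\rho\beta:M^{(k)}\to M$ is a homomorphism. Since $\iota$ is a monomorphism, we have $\Ker(\iota\rho\beta)=\Ker(\rho\beta)$. Now invoking the hypothesis that $M$ is $M$-coherent applied to $\iota\rho\beta$, we conclude that $\Ker(\rho\beta)$ is finitely $M$-generated.

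Next I would identify $\Ker\rho$ as an epimorphic image of $\Ker(\rho\beta)$ under $\beta$. Indeed, $\beta(\Ker(\rho\beta))\subseteq\Ker\rho$ is immediate, and conversely, given $b\in\Ker\rho$, surjectivity of $\beta$ yields $m\in M^{(k)}$ with $\beta(m)=b$, and then $\rho\beta(m)=\rho(b)=0$, so $m\in\Ker(\rho\beta)$ and $b=\beta(m)\in\beta(\Ker(\rho\beta))$. Thus $\beta$ restricts to an epimorphism $\Ker(\rho\beta)\twoheadrightarrow\Ker\rho$.

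Finally, any epimorphic image of a finitely $M$-generated module is itself finitely $M$-generated (compose the defining surjection from some $M^{(\ell)}$ with the further epimorphism). Applying this to the epimorphism $\Ker(\rho\beta)\twoheadrightarrow\Ker\rho$ gives the conclusion that $\Ker\rho$ is finitely $M$-generated. The main subtlety — such as it is — lies only in recognizing that the target $C$ being a submodule of $M$ is what allows the $M$-coherence definition to be invoked after prolonging by $\iota$; there is no genuine obstacle beyond this observation.
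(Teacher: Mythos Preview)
Your proof is correct and follows essentially the same approach as the paper: choose an epimorphism $\pi:M^{(n)}\to B$, observe that $\Ker(\rho\pi)=\pi^{-1}(\Ker\rho)$ is finitely $M$-generated by $M$-coherence, and then pass to the quotient $\Ker\rho$. The only difference is cosmetic --- you make the inclusion $\iota:C\hookrightarrow M$ explicit and verify in detail that $\beta(\Ker(\rho\beta))=\Ker\rho$, whereas the paper leaves these routine points implicit.
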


\begin{proof} Let $\rho:B\to C$ be an epimorphism with $B$ finitely $M$-generated and $C\leq M$. Hence there exists an epimorphism $\pi:M^{(n)}\to B$ for some $n>0$. Since $M$ is $M$-coherent, $\Ker\rho\pi=\pi^{-1}(\Ker\rho)$ is finitely $M$-generated. Since $\Ker\rho$ is a factor module of $\Ker\rho\pi$, $\Ker\rho$ is finitely $M$-generated. 
\end{proof}

\begin{prop}\label{cohepikfm3} $M$ is an $M$-coherent module if and only if every finitely $M$-generated submodule of $M$ is finitely $M$-presented.
\end{prop}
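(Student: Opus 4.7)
The plan is to prove the equivalence by handling the two directions separately, with the forward direction being a direct consequence of the definition and the backward direction reducing to an application of Lemma \ref{kermpre}.

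For the forward direction, I would assume $M$ is $M$-coherent and take an arbitrary finitely $M$-generated submodule $N \leq M$. Fix an epimorphism $\pi : M^{(n)} \to N$, and compose with the inclusion $\iota : N \hookrightarrow M$ to produce a homomorphism $\iota\pi : M^{(n)} \to M$. Since $\iota$ is a monomorphism, $\Ker(\iota\pi) = \Ker\pi$, and $M$-coherence guarantees that this kernel is finitely $M$-generated. Picking an epimorphism $\alpha : M^{(\ell)} \to \Ker\pi$ and composing with $\Ker\pi \hookrightarrow M^{(n)}$ then yields the desired exact sequence $M^{(\ell)} \to M^{(n)} \to N \to 0$, proving that $N$ is finitely $M$-presented.

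For the backward direction, I would take an arbitrary homomorphism $\rho : M^{(n)} \to M$ and set $C := \Img\rho$. Then $C$ is a finitely $M$-generated submodule of $M$ and hence, by hypothesis, finitely $M$-presented. Since $\rho : M^{(n)} \to C$ is an epimorphism with domain finitely $M$-generated, Lemma \ref{kermpre} applies directly and delivers that $\Ker\rho$ is finitely $M$-generated. As $\rho$ was arbitrary, this shows $M$ is $M$-coherent.

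The main obstacle is the backward direction. The definition of finitely $M$-presented only asserts the existence of \emph{some} exact sequence $M^{(\ell)} \to M^{(k)} \to C \to 0$, whereas $M$-coherence requires a finitely $M$-generated kernel for \emph{every} epimorphism $M^{(n)} \to M$. Bridging this gap is a Schanuel-type comparison of two presentations of $C$: precisely the content packaged in Lemma \ref{kermpre}, which is built from Lemma \ref{schanuel} together with the $M$-purity consequences of intrinsic projectivity established in Lemma \ref{inproympe}. So the real work in the backward direction has been done upstream in the section, and the proof here is essentially an invocation of the right lemma applied to $C = \Img\rho$.
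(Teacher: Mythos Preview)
Your forward direction is correct and essentially what the paper does: whether one phrases it via the definition of $M$-coherent (as you do) or via Lemma~\ref{cohepikfm} (as the paper does), the point is that for any epimorphism $\pi:M^{(n)}\to N$ onto a finitely $M$-generated submodule $N\leq M$, the composite $M^{(n)}\to M$ has kernel equal to $\Ker\pi$, and $M$-coherence makes this finitely $M$-generated.

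In the backward direction there is a genuine gap. You invoke Lemma~\ref{kermpre}, but that lemma carries the standing hypothesis that $M$ is \emph{intrinsically projective}, and Proposition~\ref{cohepikfm3} does not assume this. You even flag this yourself when you say the lemma is ``built from Lemma~\ref{schanuel} together with the $M$-purity consequences of intrinsic projectivity established in Lemma~\ref{inproympe}.'' Without intrinsic projectivity, neither the given epimorphism $\rho:M^{(n)}\to\Img\rho$ nor the epimorphism $M^{(k)}\to\Img\rho$ coming from the assumed finite $M$-presentation is known to be $M$-pure, so the Schanuel comparison in Lemma~\ref{schanuel} is not available and Lemma~\ref{kermpre} does not apply. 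As written, your argument establishes the backward implication only under the additional assumption that $M$ is intrinsically projective. The paper's one-line proof points instead to Lemma~\ref{cohepikfm} and the definition; it does not route through Lemma~\ref{kermpre} at all.
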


\begin{proof} It directly follows from Lemma \ref{cohepikfm} and the definition of an $M$-coherent module.
\end{proof}

\begin{thm}\label{coh3}
Consider the following conditions for a module $M$:
\begin{enumerate}
\item[(i)] $M$ is an $M$-coherent module.
\item[(ii)] The intersection of two finitely $M$-generated submodules of $M$ is finitely $M$-generated and $\Ker\varphi$ is finitely $M$-generated for all $\varphi\in\End_R(M)$.
\end{enumerate}
Then \emph{(i)}$\Rightarrow$\emph{(ii)} holds. In addition, if $M$ is intrinsically projective then the two conditions are equivalent.
\end{thm}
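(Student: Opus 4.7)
The plan is to handle the two implications separately. For (i)$\Rightarrow$(ii), the second half of (ii) is automatic: taking $n=1$ in the definition of an $M$-coherent module gives that every endomorphism of $M$ has finitely $M$-generated kernel. For the intersection property, given finitely $M$-generated submodules $A,B\leq M$ with epimorphisms $\rho_1:M^{(n_1)}\to A$ and $\rho_2:M^{(n_2)}\to B$, I would form the homomorphism $\tau:M^{(n_1)}\oplus M^{(n_2)}\to M$ given by $\tau(x,y)=\rho_1(x)-\rho_2(y)$. By $M$-coherence $\Ker\tau$ is finitely $M$-generated, and the assignment $(x,y)\mapsto\rho_1(x)$ sends $\Ker\tau$ onto $A\cap B$ (since $\rho_1(x)=\rho_2(y)\in A\cap B$ on $\Ker\tau$, and every element of $A\cap B$ is evidently hit), so $A\cap B$ is finitely $M$-generated as a homomorphic image of a finitely $M$-generated module.

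For the converse, assuming $M$ is intrinsically projective, I would prove by induction on $n$ that every homomorphism $\rho:M^{(n)}\to M$ has finitely $M$-generated kernel. The base case $n=1$ is exactly the endomorphism-kernel hypothesis in (ii). For the inductive step, write $M^{(n+1)}=M^{(n)}\oplus M$ and decompose $\rho$ as $\sigma=\rho|_{M^{(n)}}$ and $\tau=\rho|_M$, then consider the map $\psi:\Ker\rho\to\Img\sigma\cap\Img\tau$ defined by $\psi(x,y)=\sigma(x)$. A direct check gives $\Ker\psi=\Ker\sigma\oplus\Ker\tau$, which is finitely $M$-generated by the induction hypothesis together with the endomorphism-kernel hypothesis of (ii), while $\Img\sigma\cap\Img\tau$ is finitely $M$-generated by the intersection hypothesis of (ii).

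The main obstacle, and the step where intrinsic projectivity really enters, is verifying that $\psi$ is an $M$-pure epimorphism so that Lemma \ref{mgenext} can be applied to the short exact sequence $0\to\Ker\sigma\oplus\Ker\tau\to\Ker\rho\to\Img\sigma\cap\Img\tau\to 0$. Given any $\alpha:M\to\Img\sigma\cap\Img\tau$, Lemma \ref{inproympe}(i) applied to the epimorphisms $\sigma:M^{(n)}\to\Img\sigma$ and $\tau:M\to\Img\tau$ (both from finitely $M$-generated sources onto submodules of $M$) yields liftings $\beta:M\to M^{(n)}$ with $\sigma\beta=\alpha$ and $\gamma:M\to M$ with $\tau\gamma=-\alpha$. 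Then $(\beta,\gamma):M\to M^{(n+1)}$ satisfies $\rho(\beta,\gamma)=\sigma\beta+\tau\gamma=0$, so it factors through $\Ker\rho$ with $\psi(\beta,\gamma)=\alpha$; this establishes the $M$-purity of $\psi$, and Lemma \ref{mgenext} then yields that $\Ker\rho$ is finitely $M$-generated, completing the induction and giving (ii)$\Rightarrow$(i).
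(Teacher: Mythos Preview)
Your proof is correct. For (i)$\Rightarrow$(ii) you are essentially inlining the paper's Lemma~\ref{cohepikfm}: the paper applies that lemma to the sequence $0\to A\cap B\to A\oplus B\to A+B\to 0$, whereas you write down the map $\tau(x,y)=\rho_1(x)-\rho_2(y)$ explicitly and observe that $A\cap B$ is a quotient of $\Ker\tau$. These are the same argument.

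For (ii)$\Rightarrow$(i) the overall induction scheme on $n$ is the same, but the mechanics of the inductive step differ. The paper proceeds by showing that $\Img\psi=\psi(M\oplus 0)+\psi(0\oplus M^{(n-1)})$ is finitely $M$-\emph{presented}: each summand is finitely $M$-presented by induction, their intersection is finitely $M$-generated by hypothesis, and Proposition~\ref{intersum} then gives that the sum is finitely $M$-presented; finally Lemma~\ref{kermpre} yields that $\Ker\psi$ is finitely $M$-generated. Your route bypasses Proposition~\ref{intersum} and the notion ``finitely $M$-presented'' altogether: you work directly with the short exact sequence $0\to\Ker\sigma\oplus\Ker\tau\to\Ker\rho\xrightarrow{\psi}\Img\sigma\cap\Img\tau\to 0$, verify $M$-purity of $\psi$ using Lemma~\ref{inproympe}(i), and invoke Lemma~\ref{mgenext}. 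Your argument is a bit more economical in that it avoids the intermediate Schanuel-type reductions hidden in Proposition~\ref{intersum} and Lemma~\ref{kermpre}; the paper's argument, on the other hand, makes the role of finite $M$-presentation explicit, which ties in with Proposition~\ref{cohepikfm3}.
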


\begin{proof}
(i)$\Rightarrow$(ii) By the definition of an $M$-coherent module, $\Ker\varphi$ is finitely $M$-generated for every $\varphi\in\End_R(M)$. Now, let $A$ and $B$ be finitely $M$-generated submodules of $M$. Hence $A\oplus B$ is finitely $M$-generated. Consider the natural exact sequence
\[0\to A\cap B \to A\oplus B\rightarrow A+B\to 0.\]
It follows from Lemma \ref{cohepikfm}  that $A\cap B$ is finitely $M$-generated.

In addition, suppose $M$ is intrinsically projective. For (ii)$\Rightarrow$(i), we are going to prove by induction on $n$ that the kernel of any homomorphism $\varphi:M^{(n)}\to M$ is finitely $M$-generated. If $n=1$, the kernel of any $\varphi\in\End_R(M)$ is finitely $M$-generated by hypothesis. Suppose $n>1$ and for all homomorphisms $\rho:M^{(\ell)}\to M$ with $\ell<n$, $\Ker\rho$ is finitely $M$-generated. Let $\psi:M^{(n)}\to M$ be any homomorphism. We have that $\Img\psi=\psi(0\oplus M^{(n-1)})+\psi(M\oplus 0)$. 
Since $\Ker(\psi|_{0\oplus M^{(n-1)}})$ is finitely $M$-generated by the induction hypothesis and $\Ker(\psi|_{M\oplus 0})$ is finitely $M$-generated, $\psi(0\oplus M^{(n-1)})$ and $\psi(M\oplus 0)$ are finitely $M$-presented. By hypothesis, $\psi(0\oplus M^{(n-1)})\cap\psi(M\oplus 0)$ is finitely $M$-generated. It follows from Proposition \ref{intersum} that $\Img\psi=\psi(0\oplus M^{(n-1)})+\psi(M\oplus 0)$ is finitely $M$-presented. Hence $\Ker\psi$ is finitely $M$-generated by Lemma \ref{kermpre}. Thus, $M$ is an $M$-coherent module.
\end{proof}

\begin{cor}[{\cite[Corollary 4.60]{l}}]
A ring $R$ is a right coherent ring if and only if the intersection of two finitely generated ideals of $R$ is finitely generated and $\mathbf{r}_R(a)$ is finitely generated for all $a\in R$.
\end{cor}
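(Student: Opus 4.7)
The plan is simply to specialize Theorem \ref{coh3} to the case $M=R_R$. Since $R_R$ is projective it is automatically intrinsically projective, so Theorem \ref{coh3} gives the full equivalence (i)$\Leftrightarrow$(ii) in this case. Thus it suffices to translate each piece of that theorem into the corresponding statement about the ring.

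First I would identify the dictionary. A submodule of $R_R$ is a right ideal, and a submodule of $R_R$ is finitely $R$-generated precisely when it is a finitely generated right ideal (an epimorphism $R^{(n)}\to I$ exists iff $I$ admits $n$ generators). The endomorphism ring $S=\End_R(R_R)$ is canonically identified with $R$ via $a\mapsto (r\mapsto ar)$, and under this identification the kernel of $\varphi_a=(r\mapsto ar)$ is exactly $\mathbf{r}_R(a)$. Finally, $R$ is right coherent by definition iff $R_R$ is $R$-coherent (as noted in the paragraph following the definition of $M$-coherent, citing \cite[4G]{l}).

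With the dictionary in hand, the corollary reads as the specialization of the equivalence in Theorem \ref{coh3}: $R_R$ is $R$-coherent iff the intersection of any two finitely $R$-generated submodules of $R_R$ is finitely $R$-generated and $\Ker\varphi$ is finitely $R$-generated for every $\varphi\in \End_R(R_R)$. Replacing each module-theoretic notion by its ring-theoretic counterpart yields precisely the stated characterization. I would write the argument as a single short paragraph invoking Theorem \ref{coh3} together with these identifications.

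There is no real obstacle here; the only thing worth a line is pointing out that $R_R$ is intrinsically projective (being projective), so that the nontrivial direction (ii)$\Rightarrow$(i) of Theorem \ref{coh3} is available. Everything else is a translation of terminology.
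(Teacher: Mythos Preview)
Your proposal is correct and matches the paper's approach: the corollary is stated immediately after Theorem \ref{coh3} with no separate proof, so it is understood as the specialization $M=R_R$ together with the observation that $R_R$ is (intrinsically) projective. Your dictionary and translation are exactly what is intended.
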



\begin{cor}\label{iprick}
Let $M$ be an intrinsically projective Rickart module. Then $M$ is $M$-coherent if and only if the intersection of two finitely $M$-generated submodules of $M$ is finitely $M$-generated. 
\end{cor}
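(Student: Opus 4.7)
The plan is to derive this directly from Theorem \ref{coh3}, observing that the Rickart hypothesis is precisely what is needed to dispense with the ``$\Ker\varphi$ finitely $M$-generated'' clause in condition (ii) of that theorem.

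For the forward direction, if $M$ is $M$-coherent then Theorem \ref{coh3}(i)$\Rightarrow$(ii) immediately gives that the intersection of any two finitely $M$-generated submodules of $M$ is again finitely $M$-generated; no use of the Rickart or intrinsic projectivity hypotheses is needed here.

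For the converse, I would assume the intersection condition and verify condition (ii) of Theorem \ref{coh3}. The missing piece is that $\Ker\varphi$ is finitely $M$-generated for every $\varphi\in\End_R(M)$. Since $M$ is Rickart, $\Ker\varphi\dleq M$, so $M=\Ker\varphi\oplus L$ for some $L\leq M$. The canonical projection $M\to\Ker\varphi$ along $L$ is an epimorphism from $M=M^{(1)}$, so $\Ker\varphi$ is finitely $M$-generated. With both clauses of condition (ii) now in hand, and since $M$ is intrinsically projective, the implication (ii)$\Rightarrow$(i) of Theorem \ref{coh3} yields that $M$ is $M$-coherent.

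There is essentially no obstacle: the only subtlety is to remark why a direct summand of $M$ qualifies as ``finitely $M$-generated'' in the sense of the paper, which is immediate from the splitting projection. The corollary is therefore a clean specialization of Theorem \ref{coh3} to Rickart modules.
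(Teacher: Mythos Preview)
Your proof is correct and is precisely the intended argument: the corollary follows immediately from Theorem~\ref{coh3} once one notes that, for a Rickart module, each $\Ker\varphi$ is a direct summand of $M$ and hence finitely $M$-generated.
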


Chase (\cite[4.60]{l}) shows that a domain $R$ is right coherent if and only if the intersection of two finitely generated right ideals of $R$ is finitely generated. In the next result, we extend to a right Rickart ring.
\begin{cor}
A right Rickart ring $R$ is right coherent if and only if the intersection of two finitely generated right ideals of $R$ is finitely generated.
\end{cor}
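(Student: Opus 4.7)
The plan is to obtain this corollary as an immediate specialization of Corollary \ref{iprick} to the regular module $M = R_R$. The work consists almost entirely of translating the module-theoretic hypotheses and conclusion into ring-theoretic language, so no new mathematical argument is required.

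First I would verify the two hypotheses of Corollary \ref{iprick} for $M = R_R$. Projective modules are quasi-projective and hence intrinsically projective (noted in the paragraph following the definition of intrinsically projective in Section~3), so $R_R$ is intrinsically projective. Since $\End_R(R_R) \cong R$ under left multiplication $a \mapsto \lambda_a$, and $\Ker \lambda_a = \mathbf{r}_R(a)$, the ring $R$ is right Rickart if and only if $R_R$ is a Rickart module.

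Next I would translate the conclusion. Every finitely $R$-generated submodule of $R_R$ is a homomorphic image of some $R^{(n)}$, which is exactly a finitely generated right ideal of $R$; conversely every finitely generated right ideal is finitely $R$-generated. Moreover, as recorded in the paragraph following the definition of $M$-coherent module, $R$ is a right coherent ring precisely when $R_R$ is an $R$-coherent module. Substituting these translations into the statement of Corollary \ref{iprick} produces exactly the claimed equivalence.

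There is no real obstacle here, only a dictionary lookup. (Equivalently, one could argue directly from Corollary~\ref{coh3} applied to the ring case, i.e., the preceding corollary attributed to Chase, by observing that when $R$ is right Rickart each right annihilator $\mathbf{r}_R(a) = eR$ is principal and hence finitely generated, making the annihilator clause of that corollary automatic and leaving only the intersection condition; but going through Corollary \ref{iprick} is more economical.)
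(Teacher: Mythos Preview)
Your proposal is correct and matches the paper's intended approach: the corollary is placed immediately after Corollary~\ref{iprick} and is meant as its direct specialization to $M=R_R$, with no explicit proof given in the paper. Your dictionary check (that $R_R$ is intrinsically projective, that $R$ is right Rickart iff $R_R$ is a Rickart module, and that $R$-coherence of $R_R$ is right coherence of $R$) is exactly what is needed.
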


\begin{lem}[{\cite[15.9]{wisbauerfoundations}}]\label{flat}
The following conditions are equivalent for a module $M$:
\begin{enumerate}
\item[(a)] $_SM$ is flat where $S=\End_R(M)$;
\item[(b)] for any homomorphism $\rho:M^{(n)}\to M^{(k)}$ with $n,k>0$, $\Ker\rho$ is $M$-generated;
\item[(c)] for any homomorphism $\rho:M^{(n)}\to M$ with $n>0$, $\Ker\rho$ is $M$-generated.
\end{enumerate}
\end{lem}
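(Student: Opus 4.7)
\medskip
\noindent The plan is to run the cycle (c)$\Leftrightarrow$(a)$\Rightarrow$(b)$\Rightarrow$(c), of which only (b)$\Rightarrow$(c) is automatic (take $k=1$). The main device is Lambek's flatness criterion: $_SM$ is flat if and only if for every relation $\sum_{i=1}^n s_im_i=0$ with $s_i\in S$ and $m_i\in M$ there exist $t_{ij}\in S$ and $n_j\in M$ such that $\sum_i s_it_{ij}=0$ for every $j$ and $m_i=\sum_j t_{ij}n_j$ for every $i$.

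For (a)$\Leftrightarrow$(c) I would translate the ``$M$-generated kernel'' condition into Lambek's criterion directly. Via $\Hom_R(M^{(n)},M)\cong S^n$, a homomorphism $\rho:M^{(n)}\to M$ is encoded by a tuple $(s_1,\ldots,s_n)\in S^n$ acting as $\rho(x_1,\ldots,x_n)=\sum_i s_i(x_i)$; dually, via $\Hom_R(M,M^{(n)})\cong S^n$, a homomorphism $\alpha:M\to M^{(n)}$ whose image lies in $\Ker\rho$ is encoded by a tuple $(t_1,\ldots,t_n)\in S^n$ with $\sum_i s_it_i=0$, acting as $\alpha(y)=(t_1(y),\ldots,t_n(y))$. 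Therefore $\Ker\rho$ is $M$-generated if and only if every $(m_1,\ldots,m_n)\in M^{(n)}$ with $\sum_i s_im_i=0$ is a finite sum $\sum_j(t_{1j}n_j,\ldots,t_{nj}n_j)$ with $\sum_i s_it_{ij}=0$, which is exactly Lambek's criterion.

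For (a)$\Rightarrow$(b) I would fix $\rho:M^{(n)}\to M^{(k)}$ and lift it, via the natural isomorphism $\Hom_R(M^{(n)},M^{(k)})\cong\Hom_S(S^{(n)},S^{(k)})$ pairing an $R$-homomorphism with its matrix in $\mathrm{Mat}_{k\times n}(S)$, to a right $S$-linear map $\tilde\rho:S^{(n)}\to S^{(k)}$. Under the canonical identifications $S^{(n)}\otimes_SM\cong M^{(n)}$ and $S^{(k)}\otimes_SM\cong M^{(k)}$, one checks that $\rho=\tilde\rho\otimes_S\mathrm{id}_M$. Applying the right exact functor $-\otimes_SM$ to $0\to\Ker\tilde\rho\to S^{(n)}\to\Img\tilde\rho\to 0$ and using flatness of $_SM$ to see that $\Img\tilde\rho\otimes_SM\hookrightarrow S^{(k)}\otimes_SM=M^{(k)}$ is injective, one obtains $\Ker\rho=\Img(\Ker\tilde\rho\otimes_SM\to M^{(n)})$. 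This image is generated by the elementary tensors $\ell\otimes m$ for $\ell=(s_1,\ldots,s_n)\in\Ker\tilde\rho$ and $m\in M$; each such element equals $(s_1(m),\ldots,s_n(m))\in M^{(n)}$, which lies in the image of the $R$-homomorphism $M\to M^{(n)}$ determined by $\ell$, and that image is contained in $\Ker\rho$ because $\ell\in\Ker\tilde\rho$. Hence $\Ker\rho$ is $M$-generated.

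The only delicate point is the bookkeeping that identifies $\Hom_R(M^{(n)},M^{(k)})$ with $\Hom_S(S^{(n)},S^{(k)})$ and verifies $\rho=\tilde\rho\otimes_S\mathrm{id}_M$; once these conventions are settled, flatness and right exactness do the rest, and no serious obstacle remains.
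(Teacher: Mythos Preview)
The paper does not give its own proof of this lemma: it is stated with a citation to \cite[15.9]{wisbauerfoundations} and no argument is supplied. Your proposal is a correct, self-contained proof. The translation of condition (c) into Lambek's equational flatness criterion is exactly right: an element $(m_1,\dots,m_n)\in\Ker\rho$ lies in the trace of $M$ in $\Ker\rho$ precisely when it can be written as $\sum_j(t_{1j}n_j,\dots,t_{nj}n_j)$ with $\sum_i s_it_{ij}=0$, which is the flatness equation for $_SM$. The (a)$\Rightarrow$(b) step via $\rho=\tilde\rho\otimes_S\mathrm{id}_M$ and flatness is also sound; the identification $\Hom_R(M^{(n)},M^{(k)})\cong\Mat_{k\times n}(S)\cong\Hom_S(S^{(n)},S^{(k)})$ (right $S$-modules, left matrix action) and the check that $\tilde\rho\otimes 1$ recovers $\rho$ under $S^{(n)}\otimes_SM\cong M^{(n)}$ are routine once the conventions are fixed, exactly as you say. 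There is no gap.
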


Note that if a module $N$ is $M$-generated then $N=\text{Hom}_R(M, N)M$.
For an intrinsically projective module $M$, there is another characterization when $M$ is $M$-coherent as well as Theorem \ref{coh3}.
\begin{thm}\label{coh}
Consider the following conditions for a module $M$:
\begin{itemize}
\item[(i)] $S$ is a right coherent ring and $_SM$ is flat.
\item[(ii)] $M$ is an $M$-coherent module.
\end{itemize}
Then \emph{(i)}$\Rightarrow$\emph{(ii)} holds. In addition, if $M$ is intrinsically projective then the two conditions are equivalent.
\end{thm}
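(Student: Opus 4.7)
The plan is to translate the statement through the Hom functor $\Hom_R(M,-)$. A homomorphism $\rho:M^{(n)}\to M$ with components $\varphi_1,\dots,\varphi_n\in S$ induces, under the canonical identification $\Hom_R(M,M^{(n)})\cong S^n$ as right $S$-modules, a right $S$-module map $\Phi:S^n\to S$, $(s_1,\dots,s_n)\mapsto \sum_i\varphi_is_i$. Two identifications will be the technical backbone: $\Img\Phi=\varphi_1S+\cdots+\varphi_nS$, a finitely generated right ideal of $S$, and $\Ker\Phi\cong\Hom_R(M,\Ker\rho)$ as right $S$-modules (since $\rho\circ f=0$ iff $\Img f\subseteq \Ker\rho$).

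For (i)$\Rightarrow$(ii), I would start with an arbitrary $\rho:M^{(n)}\to M$. Right coherence of $S$ applied to the finitely generated right ideal $\Img\Phi$ gives that $\Ker\Phi\cong \Hom_R(M,\Ker\rho)$ is a finitely generated right $S$-module, with generators $\psi_1,\dots,\psi_k:M\to \Ker\rho$. Flatness of $_SM$ then forces $\Ker\rho$ to be $M$-generated via Lemma \ref{flat}, so $\Ker\rho=\Hom_R(M,\Ker\rho)M=\sum_j\Img\psi_j$, which exhibits $\Ker\rho$ as an epimorphic image of $M^{(k)}$ under $(\psi_1,\dots,\psi_k)$, i.e., finitely $M$-generated. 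This requires no projectivity hypothesis, in line with the theorem's phrasing.

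For (ii)$\Rightarrow$(i) assuming $M$ intrinsically projective, the flatness of $_SM$ is immediate: $M$-coherence makes every $\Ker\rho$ finitely $M$-generated, hence $M$-generated, so Lemma \ref{flat}(c) applies. To deduce the right coherence of $S$, I would pick an arbitrary finitely generated right ideal $I=\varphi_1S+\cdots+\varphi_nS$, form the corresponding $\rho:M^{(n)}\to M$ and $\Phi:S^n\to I$, use $M$-coherence to guarantee that $\Ker\rho$ is a finitely $M$-generated submodule of $M^{(n)}$, and then invoke Lemma \ref{inproympe}(ii) to conclude that $\Hom_R(M,\Ker\rho)\cong \Ker\Phi$ is finitely generated as a right $S$-module. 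Hence every finitely generated right ideal of $S$ is finitely presented.

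The main obstacle I expect is the careful bookkeeping of the Hom-translation $\Ker\Phi\cong\Hom_R(M,\Ker\rho)$ as right $S$-modules; once this is in hand, each direction is a short chase. The intrinsic projectivity hypothesis is consumed in precisely one place, namely Lemma \ref{inproympe}(ii) in the reverse direction, where it is needed to upgrade ``finite $M$-generation of a submodule of $M^{(n)}$'' to ``finite generation of its $S$-valued Hom,'' without which one cannot conclude right coherence of $S$.
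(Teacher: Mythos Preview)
Your proposal is correct and follows essentially the same approach as the paper: both directions translate the problem through $\Hom_R(M,-)$ (the paper writes $\rho_*$ where you write $\Phi$), use Lemma~\ref{flat} to pass between flatness of $_SM$ and $M$-generation of kernels, and in the reverse direction invoke Lemma~\ref{inproympe}(ii) to convert finite $M$-generation of $\Ker\rho\leq M^{(n)}$ into finite generation of $\Hom_R(M,\Ker\rho)_S$. The only cosmetic difference is that the paper routes the (ii)$\Rightarrow$(i) argument through $-\otimes_S M$ and then back via $\Hom_R(M,-)$ to identify $J\cong\Hom_R(M,J')$, whereas your direct identification $\Ker\Phi\cong\Hom_R(M,\Ker\rho)$ (from left exactness of $\Hom_R(M,-)$) reaches the same conclusion without the tensor round-trip.
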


\begin{proof}
(i)$\Rightarrow$(ii) Let $N\leq M$ be finitely $M$-generated. Consider the exact sequence $0\to K\to M^{(n)}\overset{\rho}{\rightarrow} N\to 0$. Applying $\Hom_R(M,\_)$, we get
\[0\to \Hom_R(M,K)\to \Hom_R(M,M^{(n)})\overset{\rho_{*}}{\rightarrow} \Hom_R(M,N).\] 
Since $N\leq M$, $\Hom_R(M,N)$ embeds in $S$. Note that $\Img\rho_{*}$ is finitely generated as a right $S$-module. This implies that $\Hom_R(M,K)$ is finitely generated as a right $S$-module because $S$ is a right coherent ring. Hence there exists an epimorphism $S^{(\ell)}\to \Hom_R(M,K)$ for some $\ell>0$. Note that $K$ is $M$-generated because $_SM$ is flat from Lemma \ref{flat}. Applying $\_\otimes_SM$,
\[\xymatrix{S^{(\ell)}\otimes_SM \ar[r] \ar[d]^\cong & \Hom_R(M,K)\otimes_SM \ar[r] \ar[d]^\cong & 0 \\
M^{(\ell)} \ar[r] & K \ar[r] & 0.}\]
Thus $K$ is finitely $M$-generated. This implies that $M$ is $M$-coherent.
\vspace{0.1cm}

In addition, suppose $M$ is intrinsically projective. For (ii)$\Rightarrow$(i), it is easy to see that $_SM$ is flat from Lemma \ref{flat}. Let $I$ be a finitely generated right ideal of $S$. Then there is an exact sequence 
\[0\to \Ker\eta\to S^{(n)}\overset{\eta}{\rightarrow} I\to 0\]
for some $n>0$. 
It is enough to show that $J=\Ker\eta$ is finitely generated as a right $S$-module. Applying the functor $\_\otimes_SM$, since $_SM$ is flat we get
\begin{equation}\label{3}
\xymatrix{0 \ar[r] & J\otimes_SM \ar[r]\ar[d]^\cong & S^{(n)}\otimes_SM \ar[r]^{\eta\otimes 1} \ar[d]_\alpha^\cong & I\otimes_S M \ar[r]\ar[d]_\beta^\cong & 0 \\
0 \ar[r] & J' \ar[r] & M^{(n)} \ar[r] & IM \ar[r] & 0,}
\end{equation}
where $\alpha,\beta$ are the canonical homomorphisms and $J'=\Ker\beta(\eta\otimes 1)\alpha^{-1}$. Since $M$ is intrinsically projective and $IM\leq M$, the functor $\Hom_R(M,\_)$ is exact in (\ref{3}). Therefore, the following diagram has exact rows:
\[\xymatrix{0 \ar[r] & J \ar[d] \ar[r] & S^{(n)} \ar[d]^\cong \ar[r]^\eta & I \ar[d]^\cong \ar[r] & 0\\ 0 \ar[r] & \Hom_R(M, J\otimes_SM) \ar[d]^\cong \ar[r] & \Hom_R(M,S^{(n)}\otimes_SM) \ar[d]^\cong \ar[r] & \Hom_R(M,I\otimes_S M) \ar[d]^\cong \ar[r] & 0 \\ 0 \ar[r] & \Hom_R(M, J') \ar[r] & \Hom_R(M,M^{(n)}) \ar[r] & \Hom_R(M,IM) \ar[r] & 0.}\]

Hence $J\cong\Hom_R(M,J')$. Since $M$ is $M$-coherent, $J'$ is finitely $M$-generated. Therefore $J$ is a finitely generated right $S$-module by Lemma \ref{inproympe}(ii). 
\end{proof}

\begin{cor}\label{coh5}
The following are equivalent for an intrinsically projective module $M$:
\begin{itemize}
\item[(a)] $M$ is an $M$-coherent module;
\item[(b)] $S=\End_R(M)$ is a right coherent ring and $_SM$ is flat;
\item[(c)] The intersection of two finitely $M$-generated submodules of $M$ is finitely $M$-generated and $\Ker\varphi$ is finitely $M$-generated for all $\varphi\in\End_R(M)$.
\end{itemize}
\end{cor}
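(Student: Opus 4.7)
The plan is to obtain this corollary as a direct corollary by chaining the two preceding theorems, both of which were stated in precisely the form needed: a one-directional implication in general, strengthened to a full equivalence once intrinsic projectivity is assumed.

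First, I would invoke Theorem \ref{coh} under the hypothesis that $M$ is intrinsically projective, giving the equivalence (a)$\Leftrightarrow$(b). In its proof, the implication (b)$\Rightarrow$(a) needs only flatness of $_SM$ (to realize the kernel of any $M^{(n)}\to N$ with $N$ finitely $M$-generated as an $M$-generated module via Lemma \ref{flat}) together with right coherence of $S$ (to produce finite generation at the endomorphism-ring level). The converse direction (a)$\Rightarrow$(b) is where intrinsic projectivity is essential, entering through Lemma \ref{inproympe}(ii) to guarantee that $\Hom_R(M,J')$ is a finitely generated right $S$-module for each finitely $M$-generated $J'\le M^{(n)}$.

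Next, I would invoke Theorem \ref{coh3}, again under intrinsic projectivity, to obtain (a)$\Leftrightarrow$(c). The implication (a)$\Rightarrow$(c) is essentially by definition: $\Ker\varphi$ finitely $M$-generated for each $\varphi\in S$ is built into $M$-coherence, and applying Lemma \ref{cohepikfm} to the canonical short exact sequence $0\to A\cap B\to A\oplus B\to A+B\to 0$ yields the intersection condition. The converse (c)$\Rightarrow$(a) uses intrinsic projectivity substantively through Proposition \ref{intersum} and Lemma \ref{kermpre} in an induction on $n$ that reduces kernels of maps $M^{(n)}\to M$ to kernels of two smaller maps whose images are intersected.

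Concatenating (b)$\Leftrightarrow$(a)$\Leftrightarrow$(c) produces the three-way equivalence. There is no substantive obstacle to overcome: the technical work was already performed in Theorems \ref{coh3} and \ref{coh}, and the corollary is essentially a bookkeeping statement packaging the $M$-coherence characterization in its module-theoretic form (c) alongside its endomorphism-ring form (b).
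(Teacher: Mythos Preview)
Your proposal is correct and matches the paper's approach exactly: the corollary is a direct concatenation of Theorem~\ref{coh3} and Theorem~\ref{coh}, each of which becomes a full equivalence under the intrinsic projectivity hypothesis. Your identification of which implication in each theorem is the unconditional one and which requires intrinsic projectivity is accurate.
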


\begin{prop}\label{manysemiher}
Let $M$ be a finite $\Sigma$-Rickart module. Then the following statements hold true:
\begin{enumerate}
\item[(i)] $\End_R(M)$ is a right semi-hereditary ring.
\item[(ii)] $\End_R(M)$ is a right coherent ring.
\item[(iii)] Every finitely $M$-generated submodule of $M$ is $M$-coherent.
\end{enumerate}
\end{prop}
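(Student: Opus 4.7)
The plan is to handle the three parts in sequence, each building on the Rickart property of the finite powers $M^{(n)}$ granted by the hypothesis.

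For (i), I would invoke Small's theorem (cited as \cite{s} earlier in the paper): $S$ is right semi-hereditary if and only if $\mathrm{Mat}_n(S)$ is right Rickart for every $n>0$. Using the canonical ring isomorphism $\mathrm{Mat}_n(S)\cong \End_R(M^{(n)})$, it is enough to show that $\End_R(M^{(n)})$ is a right Rickart ring for every $n$. This is a standard fact for Rickart modules (essentially in \cite{lrr}): given $\varphi\in T:=\End_R(M^{(n)})$, write $M^{(n)}=\Ker\varphi\oplus K$ using that $M^{(n)}$ is Rickart, and let $e\in T$ be the projection onto $\Ker\varphi$ along $K$. Then $\varphi e=0$, and conversely if $\varphi\psi=0$ then $\Img\psi\subseteq\Ker\varphi=e(M^{(n)})$, so $e\psi=\psi$; hence $\mathbf{r}_T(\varphi)=eT$.

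Part (ii) follows immediately from (i): over a right semi-hereditary ring every finitely generated right ideal is projective and therefore a direct summand of some $S^{(k)}$, so is finitely presented. Hence every finitely generated right ideal of $S$ is finitely presented, i.e., $S$ is right coherent.

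For (iii), let $N\leq M$ be finitely $M$-generated and let $\rho:M^{(n)}\to N$ be any homomorphism. Composing with the inclusion $\iota:N\hookrightarrow M$ gives $\iota\rho:M^{(n)}\to M$ with $\Ker\rho=\Ker(\iota\rho)$. Since $M^{(n)}$ is Rickart, it is $M^{(n)}$-Rickart, and $M$ is (isomorphic to) a direct summand of $M^{(n)}$; by Theorem \ref{ricric} with $M'=M^{(n)}$ and $N'=M$, $M^{(n)}$ is $M$-Rickart. Therefore $\Ker(\iota\rho)\dleq M^{(n)}$, which puts $\Ker\rho$ in $\add(M)$, and any member of $\add(M)$ is finitely $M$-generated (the projection from a power of $M$ onto such a summand is an epimorphism). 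Hence $N$ is $M$-coherent.

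The main step is part (i); once the right Rickart property of $\End_R(M^{(n)})$ is extracted from the module-level Rickart condition, Small's theorem does the rest and (ii), (iii) become routine. The only mild subtlety is remembering to push the map $\rho:M^{(n)}\to N$ into $M$ before applying the relative Rickart result in (iii), since $N$ itself is merely a submodule of $M$, not a direct summand.
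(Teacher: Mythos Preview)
Your proof is correct and follows essentially the same approach as the paper: part (i) via Small's theorem and the Rickart property of $\End_R(M^{(n)})$, part (ii) as an immediate consequence of (i), and part (iii) by observing that $\Ker\rho\dleq M^{(n)}$. One small remark on (iii): the detour through $M$ is unnecessary, since Theorem \ref{ricric} already lets $N'$ be an arbitrary \emph{submodule} of $M^{(n)}$, so you can take $N'=N$ directly and conclude that $M^{(n)}$ is $N$-Rickart.
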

\begin{proof}
(i) Since $M^{(n)}$ is Rickart, $\mathsf{Mat}_n(S)$ is a right Rickart ring for all $0<n\in\mathbb{N}$ with $S=\End_R(M)$. Then $S$ is a right semi-hereditary ring by \cite[Proposition]{s}.
(ii) It is trivial (see Lemma \ref{Chase}).


%

(iii) Let $N$ be a finitely $M$-generated submodule of $M$ and let $\varphi:M^{(n)}\to N$ be any homomorphism. Since $N\leq M$ and $M$ is finite $\Sigma$-Rickart, $\Ker\varphi\dleq M^{(n)}$. Hence $\Ker\varphi$ is finitely $M$-generated.
\end{proof}

\begin{lem}[{\cite[39.10(2)]{wisbauerfoundations}}] \label{srick} If $S=\End_R(M)$ is a right Rickart ring then $\mathbf{r}_S(\varphi)M\dleq M$ for all $\varphi\in S$.
\end{lem}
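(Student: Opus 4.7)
The plan is to unpack the definition of a right Rickart ring and exploit the fact that idempotents in $S=\End_R(M)$ correspond to direct summand decompositions of $M$. Since $S$ is right Rickart, for any $\varphi\in S$ the right annihilator $\mathbf{r}_S(\varphi)$ is generated by an idempotent, say $\mathbf{r}_S(\varphi)=eS$ with $e=e^2\in S$. The goal then reduces to identifying $\mathbf{r}_S(\varphi)M$ explicitly and recognizing it as a summand produced by this idempotent.

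First I would compute $\mathbf{r}_S(\varphi)M$. By definition this is the submodule of $M$ generated by the elements $\psi(m)$ with $\psi\in eS$ and $m\in M$. Every element of $eS$ has the form $es$ for some $s\in S$, so $\psi(m)=(es)(m)=e(s(m))\in eM$, giving $\mathbf{r}_S(\varphi)M\subseteq eM$. Conversely, writing $e=e\cdot 1_S\in eS$, each $em\in eM$ lies in $\mathbf{r}_S(\varphi)M$. Hence $\mathbf{r}_S(\varphi)M=eM$.

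Next I would use that $e\in S=\End_R(M)$ is an idempotent endomorphism, so it induces the standard decomposition $M=eM\oplus(1_S-e)M$ with $(1_S-e)M=\Ker e$. In particular $eM\dleq M$, and combining with the equality of the previous step yields $\mathbf{r}_S(\varphi)M=eM\dleq M$, as desired.

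There is essentially no obstacle here: the argument is purely formal once one remembers the correspondence between idempotents in $\End_R(M)$ and direct summand decompositions of $M$. The only point worth double-checking is the equality $\mathbf{r}_S(\varphi)M=eM$, but this is immediate from $e\in eS$ and the $S$-action on $M$ being evaluation of endomorphisms.
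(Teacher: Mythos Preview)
Your proof is correct and is exactly the standard argument: the paper itself does not prove this lemma but simply cites \cite[39.10(2)]{wisbauerfoundations}, and what you have written is precisely that proof. There is nothing to add.
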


\begin{lem}[{Chase \cite[Theorem 4.1]{c1}}]\label{Chase} A ring $R$ is  right semi-hereditary
if and only if $R$ is a right coherent ring and all right ideals of $R$ are flat.
\end{lem}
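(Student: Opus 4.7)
The plan is to prove the two directions separately, relying on two classical facts about flat and finitely presented modules: (1) flatness is preserved under directed colimits, and (2) a finitely presented flat module over an arbitrary ring is projective.

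For the forward direction, assume $R$ is right semi-hereditary. Every finitely generated right ideal $I$ is by definition projective, hence flat. To see that an arbitrary right ideal $J$ is flat, write $J$ as the directed union of its finitely generated subideals $J=\varinjlim J_\lambda$; since each $J_\lambda$ is flat and flatness passes to directed colimits, $J$ is flat. For coherence, given a finitely generated right ideal $I$, pick an epimorphism $\pi: R^{(n)} \to I$. Projectivity of $I$ splits $\pi$, so $\Ker\pi$ is a direct summand of $R^{(n)}$, hence finitely generated. Thus every finitely generated right ideal is finitely presented, which is precisely the definition of $R$ being right coherent.

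For the backward direction, assume $R$ is right coherent and every right ideal is flat. Let $I$ be a finitely generated right ideal. By coherence, $I$ is finitely presented, and by hypothesis $I$ is flat. Invoking the fact that a finitely presented flat module over any ring is projective yields that $I$ is projective, so $R$ is right semi-hereditary.

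The main obstacle, or rather the one non-elementary ingredient, is the theorem that finitely presented flat modules are projective; this would be cited from a standard homological algebra reference (e.g., Lam's \emph{Lectures on Modules and Rings}, or Rotman's \emph{An Introduction to Homological Algebra}). Everything else reduces to routine diagram chasing and the standard directed-colimit stability of flatness. Since the statement is explicitly attributed to Chase, the cleanest presentation is to quote these two ingredients and let each implication fall out in a few lines.
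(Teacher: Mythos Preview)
Your argument is correct: the forward direction uses that projective implies flat and that flatness is preserved under directed colimits, while coherence follows since a projective finitely generated ideal is a direct summand of a finitely generated free module and hence finitely presented; the backward direction combines coherence (finitely generated ideals are finitely presented) with the cited fact that finitely presented flat modules are projective.

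Note, however, that the paper does not supply its own proof of this lemma. It is stated purely as a citation of Chase \cite[Theorem~4.1]{c1} and used as a black box in the proofs of Proposition~\ref{manysemiher}(ii) and Theorem~\ref{endoringsh}. So there is no in-paper argument to compare your proposal against; your write-up simply spells out the standard proof that the paper elects to omit.
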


As a finitely generated $\Sigma$-Rickart module is characterized in terms of its endomorphism ring (Theorem \ref{endoringh}), we obtain the characterization of a finite $\Sigma$-Rickart module using its endomorphism ring.
\begin{thm}\label{endoringsh}
The following conditions are equivalent for a module $M$ and $S=\End_R(M)$:
\begin{enumerate}
\item[(a)] $M$ is a finite $\Sigma$-Rickart module;
\item[(b)] $S$ is a right semi-hereditary ring and $_SM$ is flat;
\item[(c)] $M$ is an intrinsically projective $M$-coherent module and all right $S$-ideals are flat.
\end{enumerate}
\end{thm}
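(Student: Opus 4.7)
The plan is to establish (a)$\Leftrightarrow$(b) via Small's theorem together with the flatness criterion of Lemma \ref{flat}, and then to obtain (a)$\Leftrightarrow$(c) by combining Theorem \ref{coh} with Chase's theorem (Lemma \ref{Chase}).

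For (a)$\Rightarrow$(b), Proposition \ref{manysemiher}(i) already supplies right semi-hereditariness of $S$. To verify flatness of $_SM$, I would take any $\rho:M^{(n)}\to M$ and precompose with the canonical injection $\iota:M\to M^{(n)}$ into the first coordinate; the resulting endomorphism $\iota\rho\in\End_R(M^{(n)})$ has the same kernel as $\rho$, and since $M^{(n)}$ is Rickart, $\Ker\rho$ is a direct summand of $M^{(n)}$, hence $M$-generated. Lemma \ref{flat}(c) then yields flatness of $_SM$. For (a)$\Rightarrow$(c), the paper observes after the definition of intrinsic projectivity that every finite $\Sigma$-Rickart module is intrinsically projective; Proposition \ref{manysemiher}(iii) applied with $N=M$ gives $M$-coherence of $M$; and Chase's theorem applied to the right semi-hereditary ring $S$ delivers flatness of every right ideal of $S$.

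For (b)$\Rightarrow$(a), Small's theorem (\cite[Proposition]{s}) provides that $\Mat_n(S)\cong \End_R(M^{(n)})$ is right Rickart for every $n$, and the flatness of $_SM$ transfers to flatness of $_{\Mat_n(S)}M^{(n)}$ via the Morita equivalence between $S$ and $\Mat_n(S)$ (concretely using $e_{11}M^{(n)}\cong M$ and $e_{11}\Mat_n(S)e_{11}\cong S$). Fix any $\varphi\in \End_R(M^{(n)})$. Right Rickartness furnishes an idempotent $e\in \Mat_n(S)$ with $\mathbf{r}_{\Mat_n(S)}(\varphi)=e\Mat_n(S)$, while flatness of $_{\Mat_n(S)}M^{(n)}$ forces the equality $\Ker\varphi=\mathbf{r}_{\Mat_n(S)}(\varphi)M^{(n)}=eM^{(n)}$, which is a direct summand; this is consistent with Lemma \ref{srick}. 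Hence each $M^{(n)}$ is Rickart. For (c)$\Rightarrow$(b), Theorem \ref{coh} converts intrinsic projectivity together with $M$-coherence into right coherence of $S$ and flatness of $_SM$, after which Chase's theorem upgrades right coherence plus flat right ideals to right semi-hereditariness of $S$.

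The main step to handle with care is the (b)$\Rightarrow$(a) direction: one must justify cleanly that $_SM$ flat implies $_{\Mat_n(S)}M^{(n)}$ flat, and then combine this flatness with the right Rickartness of $\Mat_n(S)$ so that the equality $\Ker\varphi=\mathbf{r}_{\Mat_n(S)}(\varphi)M^{(n)}$ holds for every $\varphi\in \End_R(M^{(n)})$. Once this is in place, identifying $\Ker\varphi$ as $eM^{(n)}$ for an idempotent $e$ is immediate, and the remaining implications reduce to direct appeals to Proposition \ref{manysemiher}, Theorem \ref{coh}, and Chase's theorem.
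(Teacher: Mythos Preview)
Your proposal is correct and uses essentially the same ingredients as the paper's proof: Proposition~\ref{manysemiher}, Lemma~\ref{flat}, Lemma~\ref{srick}, Theorem~\ref{coh}, and Chase's theorem. The only organizational difference is that the paper runs the cycle (a)$\Rightarrow$(b)$\Rightarrow$(c)$\Rightarrow$(a), whereas you prove (a)$\Leftrightarrow$(b) and (c)$\Rightarrow$(b) separately and deduce (a)$\Rightarrow$(c) directly from (a); in particular, you bypass the paper's appeal to \cite[Examples~5.6(2)]{wr} (which extracts intrinsic projectivity from condition (b)) by instead invoking the remark that every finite $\Sigma$-Rickart module is intrinsically projective. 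One small remark on your (b)$\Rightarrow$(a): the Morita transfer of flatness to $_{\Mat_n(S)}M^{(n)}$ is correct but unnecessary, since Lemma~\ref{flat}(b) already gives, directly from flatness of $_SM$, that $\Ker\varphi$ is $M$-generated for any $\varphi:M^{(n)}\to M^{(n)}$, whence $\Ker\varphi=\Hom_R(M^{(n)},\Ker\varphi)M^{(n)}=\mathbf{r}_{\Mat_n(S)}(\varphi)M^{(n)}$ and Lemma~\ref{srick} finishes; this is exactly how the paper argues in its (c)$\Rightarrow$(a) step.
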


\begin{proof}      
(a)$\Rightarrow$(b) It follows from Proposition \ref{manysemiher}(i) and Lemma \ref{flat}.

(b)$\Rightarrow$(c) Since $S$ is a right semi-hereditary ring, $S$ is a right coherent ring and every right $S$-ideal is flat by Lemma \ref{Chase}. 
It follows from \cite[Examples 5.6(2)]{wr} that $M$ is intrinsically projective. From Theorem \ref{coh} $M$ is $M$-coherent because $_SM$ is flat. 

(c)$\Rightarrow$(a) Since $M$ is intrinsically projective and $M$-coherent, by Theorem \ref{coh} $S$ is a right coherent ring and $_SM$ is flat. 
 From Lemma \ref{Chase} $S$ is a right semi-hereditary ring because all right $S$-ideals are flat. Let $\varphi:M^{(n)}\to M^{(n)}$ be any endomorphism. 
Since $_SM$ is flat as above, $\Ker\varphi$ is $M$-generated by Lemma \ref{flat}.
Hence $\Ker\varphi=\Hom_{R}(M^{(n)}, \Ker\varphi )M^{(n)}=\mathbf{r}_{\mathsf{Mat}_n(S)}(\varphi)M^{(n)}\dleq M^{(n)}$ from Lemma \ref{srick}.
 Therefore $M$ is a finite $\Sigma$-Rickart module.
\end{proof}


The ``$_SM$ is flat" condition in (b)$\Rightarrow$(a) is not superfluous as shown next.
\begin{exam} (i) Consider  $\mathbb{Z}_{p^\infty}$ as a $\mathbb{Z}$-module. 
Then $S=\text{End}_\mathbb{Z}({\mathbb{Z}_{p^\infty}})$ is a right semi-hereditary ring. But $\mathbb{Z}_{p^\infty}$ is neither a finite $\Sigma$-Rickart $\mathbb{Z}$-module nor a flat left $S$-module.\\
(ii) The $\mathbb{Z}$-module $\mathbb{Z}_4$ is $\mathbb{Z}_4$-coherent, however $\mathbb{Z}_4$ is not finite $\Sigma$-Rickart.
\end{exam}

An explicit application of Theorem \ref{endoringsh} is exhibited in the next example. 
\begin{exam}
Let $R$ be the ring of $n\times n$ upper triangular matrices over a right semi-hereditary ring $A$. Let $e\in R$ be a unit matrix with $1$ in the $(1,1)$-position and $0$ elsewhere. 
Then $\End_R(eR)\cong A$ and $eR\cong A^{(n)}$ as projective left $A$-modules. Therefore $eR$ is a finite $\Sigma$-Rickart module by Theorem \ref{endoringsh}. For example, while $R=\left(\begin{smallmatrix}\mathbb{Z}&\mathbb{Z}\\0&\mathbb{Z}\end{smallmatrix}\right)$ is not a right hereditary ring, $eR=\left(\begin{smallmatrix}\mathbb{Z}&\mathbb{Z}\\0&0\end{smallmatrix}\right)$ is a finite $\Sigma$-Rickart $R$-module for $e=\left(\begin{smallmatrix}1&0\\0&0\end{smallmatrix}\right)$.
\end{exam}

Since every finitely generated projective module over a right semi-hereditary ring is a finite $\Sigma$-Rickart module, its endomorphism ring is a right semi-hereditary ring as a consequence of Theorem \ref{endoringsh}.

\begin{cor} The following statements hold true:
\begin{enumerate}
\item[(i)] \emph{(\cite[Theorem 2.10]{genqf3})} If $R$ is a right semi-hereditary ring and $P$ is a finitely generated projective $R$-module, then $\End_R(P)$ is a right semi-hereditary ring.
\item[(ii)] If $R$ is a right semi-hereditary ring, so is $eRe$ for any idempotent $e\in R$.
\end{enumerate}
\end{cor}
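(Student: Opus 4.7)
My plan is to reduce both parts of the corollary to an application of Theorem \ref{endoringsh}, using the closure properties of finite $\Sigma$-Rickart modules established in Lemma \ref{summandher}.

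For part (i), I would start from the observation (Example 2.2(i)) that a ring $R$ is right semi-hereditary if and only if $R_R$ is a finite $\Sigma$-Rickart module. Given a finitely generated projective right $R$-module $P$, there exists $n>0$ with $P \dleq R^{(n)}$. By Lemma \ref{summandher}(ii), $R^{(n)}$ is finite $\Sigma$-Rickart, and then by Lemma \ref{summandher}(i), its direct summand $P$ is finite $\Sigma$-Rickart. Applying Theorem \ref{endoringsh}(a)$\Rightarrow$(b) to $P$ immediately gives that $\End_R(P)$ is a right semi-hereditary ring (we even get, as a bonus, that ${}_{\End_R(P)}P$ is flat).

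For part (ii), given an idempotent $e \in R$, I would take $P = eR$, which is a finitely generated projective right $R$-module as a direct summand of $R_R$. By part (i), $\End_R(eR)$ is right semi-hereditary. Identifying $\End_R(eR) \cong eRe$ via the standard isomorphism sending $\varphi \mapsto \varphi(e)$ (which carries composition to the multiplication inside $eRe$) then completes the proof.

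There is no real obstacle here; the whole argument is just assembling the pieces. The non-trivial content lives in Theorem \ref{endoringsh}, whose direction (a)$\Rightarrow$(b) provides the semi-hereditariness of the endomorphism ring, and in Lemma \ref{summandher}, which ensures the finite $\Sigma$-Rickart property transfers to $P$. It is worth noting that the same strategy recovers the original ring-theoretic statement: the property ``right semi-hereditary'' is Morita invariant on the subclass of corner rings $eRe$ with $e$ a full idempotent, and passes to any corner ring at all.
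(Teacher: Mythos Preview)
Your proposal is correct and follows essentially the same route as the paper: the paper's one-sentence justification (preceding the corollary) is that a finitely generated projective module over a right semi-hereditary ring is finite $\Sigma$-Rickart, so Theorem \ref{endoringsh} gives the result. You have simply unpacked why $P$ is finite $\Sigma$-Rickart via Lemma \ref{summandher} rather than citing Example 2.2(vi) directly, and then handled (ii) by specializing to $P=eR$ and using $\End_R(eR)\cong eRe$.
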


\section{Applications}

\begin{prop}\label{emsinj}
Let $M$ be a right $R$-module with $S=\End_R(M)$ such that $_SM$ is flat. Then the following equivalences hold true: 
\begin{enumerate}
\item[(i)] A right $R$-module $A$ is in $\mathfrak{E}_M$ iff $\Hom_R(M,A)$ is an injective right $S$-module.
\item[(ii)]  A right $R$-module $A$ is in $\mathfrak{F}_M$ iff $\Hom_R(M,A)$ is an f-injective right $S$-module.
\end{enumerate}
\end{prop}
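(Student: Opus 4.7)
The plan is to reduce both equivalences to Baer's criterion (for (i)) and its finitely generated analogue characterizing f-injective modules (for (ii)), using the tensor-Hom adjunction between $\_\otimes_S M$ and $\Hom_R(M,\_)$. The key role of the flatness of $_SM$ is that, for every right ideal $I\leq S$, the canonical map $I\otimes_S M \to M$ is a monomorphism with image $IM$; thus right ideals (respectively, finitely generated right ideals) of $S$ correspond via $I\mapsto IM$ to $M$-generated (respectively, finitely $M$-generated) submodules of $M$ living inside $M$.

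For necessity in (i), given a right $S$-linear map $f:I\to\Hom_R(M,A)$, adjunction yields $\tilde f:I\otimes_S M\to A$ with $\tilde f(\varphi\otimes m)=f(\varphi)(m)$; flatness rewrites this as $\beta:IM\to A$ satisfying $\beta(\varphi(m))=f(\varphi)(m)$. Since $IM$ is $M$-generated and $A\in\mathfrak{E}_M$, applying the defining property to the inclusion $IM\hookrightarrow M$ extends $\beta$ to some $\gamma:M\to A$. Then $F:S\to\Hom_R(M,A)$ defined by $F(s):=\gamma\circ s$ is a right $S$-linear extension of $f$, so $\Hom_R(M,A)$ is injective by Baer. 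For (ii) the argument is identical, replacing ``right ideal'' by ``finitely generated right ideal'': if $I=\sum_{i=1}^n\psi_i S$, then $IM=\sum_i\psi_i(M)$ is finitely $M$-generated as the image of the obvious map $M^{(n)}\to M$, and one uses $A\in\mathfrak{F}_M$ in place of $A\in\mathfrak{E}_M$.

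For sufficiency in (i), assume $\Hom_R(M,A)$ is injective and let $\alpha:N\hookrightarrow M$ with $N$ being $M$-generated, together with $\beta:N\to A$. Set $I:=\{\alpha\varphi\mid\varphi\in\Hom_R(M,N)\}\subseteq S$, which is a right ideal of $S$. The rule $f(\alpha\varphi):=\beta\varphi$ defines a right $S$-linear map $f:I\to\Hom_R(M,A)$, well-defined because $\alpha$ is a monomorphism. Baer extends $f$ to $F:S\to\Hom_R(M,A)$; then $\gamma:=F(1_M)$ satisfies $\gamma\alpha=\beta$, since $F(\alpha\varphi)=\gamma\circ\alpha\varphi=f(\alpha\varphi)=\beta\varphi$ and $N$ is $M$-generated. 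For (ii), choose an epimorphism $(\rho_1,\ldots,\rho_n):M^{(n)}\to N$ witnessing that $N$ is finitely $M$-generated, set $\psi_i:=\alpha\rho_i$ and $I:=\sum_i\psi_i S$, and define $f(\sum\psi_is_i):=\sum\beta\rho_is_i$. Then f-injectivity of $\Hom_R(M,A)$ furnishes the desired extension and $\gamma:=F(1_M)$ works as before.

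The main technical point to verify is the well-definedness of $f$ in the sufficiency direction of (ii): a relation $\sum\psi_is_i=0$ in $S$ must force $\sum\beta\rho_is_i=0$. This follows because $\alpha$ being a monomorphism promotes the given identity to $\sum\rho_is_i=0$ in $\Hom_R(M,N)$, whence $\sum\beta\rho_is_i=\beta\circ 0=0$. Notably, the argument requires neither intrinsic projectivity of $M$ nor any other structural hypothesis beyond flatness of $_SM$, and this flatness is used only in the necessity directions, through the identification $I\otimes_S M\cong IM$.
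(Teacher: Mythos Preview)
Your proof is correct and follows essentially the same route as the paper's: both directions use the tensor--Hom adjunction between $\_\otimes_S M$ and $\Hom_R(M,\_)$, with flatness of $_SM$ providing the identification $I\otimes_S M\cong IM$ in the necessity direction, and the sufficiency direction proceeding by pushing the data through $\Hom_R(M,\_)$ and invoking Baer (resp.\ its finitely generated version). Your explicit remark that flatness is needed only for necessity, and your more detailed treatment of the well-definedness in (ii), are welcome clarifications but do not constitute a different method.
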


\begin{proof}
(i) Suppose $A\in\mathfrak{E}_M$. Let $I_S$ be a right ideal of $S$ and let $\alpha:I\to \Hom_R(M,A)$ be any $S$-homomorphism. Hence we have the following diagram of right $R$-modules
\[\xymatrix{0\ar[r] & I\otimes_S M\ar[r]^{\iota\otimes 1} \ar[d]_{\alpha\otimes 1} & S\otimes_SM\cong M\ar@{--{>}}[ddl]^{g} \\ &  \Hom_R(M,A)\otimes_S M \ar[d]_j &  \\ &  A & }\]
where $\iota:{I_S}\to S$ is the canonical inclusion and $j:\Hom_R(M,A)\otimes_S M\to A$ is given by $j(f\otimes m)=f(m)$. Note that $I\otimes_S M$ is $M$-generated and since $_SM$ is flat, $\iota\otimes 1$ is a monomorphism. Let $\theta$ denote the canonical isomorphism $S\otimes_S M\to M$. By the definition of $\mathfrak{E}_M$, there exists an $R$-homomorphism $g:M \to A$ such that $g\theta(\iota\otimes 1)=j(\alpha\otimes 1)$. Define $\bar{\alpha}:S\to \Hom_R(M,A)$ as $(\bar{\alpha}(f))(m)=gf(m)$ for $f\in S$. Let $h\in I$ and $m\in M$. Hence 
\[(\bar{\alpha}\iota(h))(m)=gh(m)=g(\theta(h\otimes m))=(g\theta(\iota\otimes 1))(h\otimes m)\]
\[=j(\alpha\otimes 1)(h\otimes m)=j(\alpha(h)\otimes m)=(\alpha(h))(m).\] 
This implies that $\bar{\alpha}\iota(h)=\alpha(h)$ for all $h\in I$. Thus, $\Hom_R(M,A)$ is an injective right $S$-module.

Conversely, let $A_R$ be an $R$-module such that $\Hom_R(M,A)$ is an injective right $S$-module. Let $N$ be an $M$-generated submodule of $M$ and $f:N\to A$ be any $R$-homomorphism. Hence we have the following diagram of right $S$-modules
\[\xymatrix{0 \ar[r] & \Hom_R(M,N)\ar[r]^-{i_\ast} \ar[d]_{f_\ast} & S \ar@{--{>}}[dl]^{\alpha}\\ & \Hom_R(M,A) & }\] 
where $i:N\to M$ is the canonical inclusion. 
By hypothesis there exists an $S$-homomorphism $\alpha:S\to \Hom_R(M,A)$ such that $\alpha i_\ast=f_\ast$. Define $\bar{\alpha}:M\to A$ as $\bar{\alpha}(m)=\left(\alpha(\text{Id}_M)\right)(m)$. 
Let $n\in N$ be arbitrary. Since $N$ is $M$-generated, $n=\sum_{i=1}^kg_i(m_i)$ with $g_i\in\Hom_R(M,N)$ and $m_i\in M$. Then 
\[\bar{\alpha}i(n)=\left(\alpha(\text{Id}_M)\right)(i(n))=\left(\alpha(\text{Id}_M)\right)i\left(\sum_{i=1}^kg_i(m_i)\right)=\sum_{i=1}^k\left(\alpha(\text{Id}_M)\right)ig_i(m_i)=\sum_{i=1}^k\left(\alpha(ig_i)\right)(m_i)\]
\[=\sum_{i=1}^k\left(\alpha i_\ast(g_i)\right)(m_i)=\sum_{i=1}^k\left(f_\ast(g_i)\right)(m_i)=\sum_{i=1}^kfg_i(m_i)=f\left(\sum_{i=1}^kg_i(m_i)\right)=f(n)\]
because $g_i\in S$.
This implies that $f=\bar{\alpha}i$. Thus, $A_R$ is in $\mathfrak{E}_M$.\\
(ii) The proof is similar to that of (i). Note that if $I_S$ is a finitely generated ideal of $S$, then $I\otimes_S M$ is a finitely $M$-generated right $R$-module.
\end{proof}

\begin{rem} (i) For any endoregular module $M$, $\Hom_R(M, A)$ is an f-injective right $S$-module for any module $A$. \\(ii) ({\cite[Theorem 5 and Corollary 2]{megibbenabsolutely}}) Every module over a von Neumann regular ring is f-injective.
\end{rem}

\begin{cor}\label{sremss12} Let $M$ be a right $R$-module with $S=\End_R(M)$ such that $_SM$ is flat. Then the following equivalences hold true:
\begin{enumerate}
\item[(i)]  $M\in \mathfrak{E}_M$ if and only if $S$ is a right self-injective ring.
\item[(ii)]  $M\in\mathfrak{F}_M$ if and only if $S$ is a right f-injective ring.
\end{enumerate}
\end{cor}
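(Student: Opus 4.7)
The plan is to deduce this corollary as an immediate consequence of Proposition \ref{emsinj} by taking $A=M$. Note that the hypothesis $_SM$ is flat is exactly the hypothesis of Proposition \ref{emsinj}, so both equivalences are available.

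For part (i), I would instantiate Proposition \ref{emsinj}(i) with $A=M$. Since $\Hom_R(M,M)=S$ as a right $S$-module, the proposition gives that $M\in\mathfrak{E}_M$ if and only if $S_S$ is an injective right $S$-module. By definition, this is exactly the statement that $S$ is a right self-injective ring. Similarly, for part (ii), I would apply Proposition \ref{emsinj}(ii) with $A=M$, obtaining that $M\in\mathfrak{F}_M$ if and only if $S_S=\Hom_R(M,M)$ is an f-injective right $S$-module, that is, $S$ is a right f-injective ring.

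No real obstacle is expected here; the corollary is essentially a restatement of Proposition \ref{emsinj} in the special case $A=M$, using the canonical identification $\Hom_R(M,M)\cong S$ of right $S$-modules. The only thing to double-check is that the notion of a ``right f-injective ring'' used in (ii) coincides with ``$R_R$ is an f-injective module'' (and hence that ``$S$ is right f-injective'' corresponds to ``$S_S$ is f-injective''), which is the convention recalled in the paragraph preceding Proposition \ref{injadd}. With that convention in place, the proof reduces to a one-line application of the preceding proposition.
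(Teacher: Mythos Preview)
Your proposal is correct and matches the paper's approach: the corollary is stated without proof precisely because it follows immediately from Proposition~\ref{emsinj} by taking $A=M$ and using the identification $\Hom_R(M,M)\cong S$ as right $S$-modules.
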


Here we have an alternative proof of Theorem \ref{semiendo}.

\begin{cor}
The following conditions are equivalent for a right $R$-module $M$:
\begin{enumerate}
\item[(a)] $M$ is a finite $\Sigma$-Rickart module and $M\in\mathfrak{F}_M$;
\item[(b)] $\End_R(M)$ is a von Neumann regular ring.
\end{enumerate}
\end{cor}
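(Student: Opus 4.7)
The plan is to deduce this corollary directly from the technology developed in Section~3 (Theorem \ref{endoringsh}) together with the new tool just established in Corollary \ref{sremss12}(ii), and the ring-level characterization in Corollary \ref{shabsvn}. The point of calling it ``an alternative proof of Theorem \ref{semiendo}'' is precisely that we want to avoid re-running the direct module theoretic argument and instead pass through the endomorphism ring.

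For the implication (a) $\Rightarrow$ (b), I would proceed as follows. Assume $M$ is finite $\Sigma$-Rickart and $M \in \mathfrak{F}_M$. Applying Theorem \ref{endoringsh}, we obtain that $S = \End_R(M)$ is a right semi-hereditary ring and that $_SM$ is flat. With $_SM$ flat in hand, Corollary \ref{sremss12}(ii) translates the condition $M\in\mathfrak{F}_M$ into the statement that $S$ is a right f-injective ring. So $S$ is simultaneously right semi-hereditary and right f-injective, and Corollary \ref{shabsvn} then forces $S$ to be von Neumann regular.

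For (b) $\Rightarrow$ (a), assume $S$ is von Neumann regular. Then $S$ is a right semi-hereditary ring (every right annihilator, hence every principal, hence every finitely generated right ideal is generated by an idempotent, so projective) and every right $S$-module is flat; in particular $_SM$ is flat. By Theorem \ref{endoringsh} this already gives that $M$ is finite $\Sigma$-Rickart. Moreover, over a von Neumann regular ring every module is f-injective, so $S_S$ is f-injective. Since $_SM$ is flat, Corollary \ref{sremss12}(ii) now yields $M \in \mathfrak{F}_M$, completing the argument.

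There is no serious obstacle; the only subtle point is making sure the flatness hypothesis needed in Corollary \ref{sremss12}(ii) is available on both directions (it comes from Theorem \ref{endoringsh} in one direction and from von Neumann regularity in the other). The reason this counts as a genuinely different proof from Theorem \ref{semiendo} is that here we never argue inside $M^{(n)}$ about finitely $M$-generated submodules being direct summands; instead the work is entirely transported to the ring $S$, where semi-hereditary plus f-injective collapses to von Neumann regular via Corollary \ref{shabsvn}.
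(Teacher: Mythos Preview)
Your proof is correct and follows essentially the same route as the paper's own one-line proof, which cites Theorems \ref{semiendo} and \ref{endoringsh} together with Corollaries \ref{shabsvn} and \ref{sremss12}(ii). If anything, your write-up is cleaner: you avoid invoking Theorem \ref{semiendo} itself (appropriate, since this corollary is billed as an \emph{alternative} proof of that theorem) and you make explicit where the flatness of $_SM$ comes from in each direction, which is exactly the point the paper leaves implicit.
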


\begin{proof}
The proof follows from Theorems \ref{semiendo} and \ref{endoringsh} and  Corollaries \ref{shabsvn} and \ref{sremss12}(ii). 
\end{proof}

\begin{cor}\label{sremss}
The following conditions are equivalent for a finitely generated module $M$:
\begin{enumerate}
\item[(a)] $M$ is a $\Sigma$-Rickart module and $M\in\mathfrak{E}_M$;
\item[(b)] $\End_R(M)$ is semisimple artinian.
\end{enumerate}
\end{cor}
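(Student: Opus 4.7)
The plan is to reduce the corollary to the two earlier characterizations available in the excerpt: Theorem \ref{endoringh}, which converts ``$\Sigma$-Rickart + finitely generated'' into ``$S$ right hereditary and ${}_SM$ flat'', and Corollary \ref{sremss12}(i), which converts ``$M\in\mathfrak{E}_M$ (with ${}_SM$ flat)'' into ``$S$ right self-injective''. The bulk of the work is then the purely ring-theoretic observation that a ring that is simultaneously right hereditary and right self-injective must be semisimple artinian.

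For the implication (a)$\Rightarrow$(b) I would proceed as follows. First, since $M$ is finitely generated and $\Sigma$-Rickart, Theorem \ref{endoringh} gives that $S=\End_R(M)$ is right hereditary and that ${}_SM$ is flat. The flatness of ${}_SM$ now activates Corollary \ref{sremss12}(i), which together with $M\in\mathfrak{E}_M$ yields that $S$ is right self-injective. Now I invoke the standard characterization of right hereditary rings: $S$ is right hereditary if and only if every quotient of an injective right $S$-module is injective. Applying this to $S_S$ itself (which is injective by right self-injectivity), every cyclic module $S/I$ is injective for every right ideal $I\leq S$. Consequently the short exact sequence $0\to I\to S\to S/I\to 0$ splits, so $I$ is a direct summand of $S_S$. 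Since every right ideal is a direct summand, $S$ is semisimple artinian.

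For the converse (b)$\Rightarrow$(a), assume $S$ is semisimple artinian. Then every left $S$-module is projective, hence ${}_SM$ is flat; moreover $S$ is both right hereditary and right self-injective. Theorem \ref{endoringh} then gives that $M$ is $\Sigma$-Rickart, and Corollary \ref{sremss12}(i) gives that $M\in\mathfrak{E}_M$.

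The only nontrivial step is the ring-theoretic lemma ``right hereditary $+$ right self-injective $\Rightarrow$ semisimple,'' which I expect to be the main (if mild) obstacle; but the short argument via the hereditary-equivalent that quotients of injectives are injective handles it in a couple of lines, so I would either insert that as a self-contained paragraph or cite it from a standard homological algebra reference. No new machinery beyond what is already in the excerpt is required.
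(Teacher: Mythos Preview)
Your overall strategy matches the paper's exactly: for (a)$\Rightarrow$(b) reduce via Theorem~\ref{endoringh} and Corollary~\ref{sremss12}(i) to the ring-theoretic statement ``right hereditary $+$ right self-injective $\Rightarrow$ semisimple artinian'', and for (b)$\Rightarrow$(a) run the same two results backwards after noting ${}_SM$ is flat. The paper does precisely this, citing Osofsky's theorem \cite{osofskynonijective} for the ring-theoretic step.

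The gap is in your proposed self-contained proof of that ring-theoretic step. From ``$S_S$ injective'' and ``quotients of injectives are injective'' you correctly deduce that $S/I$ is injective for every right ideal $I$. But the next sentence, ``consequently the short exact sequence $0\to I\to S\to S/I\to 0$ splits'', does not follow: injectivity of the \emph{cokernel} never forces a short exact sequence to split (e.g.\ $0\to\mathbb{Z}\to\mathbb{Q}\to\mathbb{Q}/\mathbb{Z}\to 0$ over $\mathbb{Z}$). Splitting requires injectivity of the \emph{kernel} or projectivity of the cokernel, neither of which you have established. What you have actually shown is that every cyclic right $S$-module is injective, and passing from that to ``$S$ is semisimple artinian'' is exactly Osofsky's theorem; its proof is not a two-line argument. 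So either cite Osofsky (as the paper does) or supply a genuine proof of that implication; the short paragraph you sketched does not do the job.
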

\begin{proof}
(a)$\Rightarrow$(b) By \cite[Theorem 4.6]{lm1} $S=\End_R(M)$ is a right hereditary ring and $_SM$ is flat. It follows from Corollary \ref{sremss12}(i) that $S$ is right self-injective. Thus, $S_S$ is semisimple artinian by \cite[Corollary]{osofskynonijective}.
(b)$\Rightarrow$(a) Since $S$ is von Neumann regular, from Theorems \ref{semiendo} and \ref{endoringsh} $_SM$ is flat.  So, the proof follows from \cite[Theorem 4.6]{lm1} that $M$ is $\Sigma$-Rickart and from Corollary \ref{sremss12}(i) that $M\in\mathfrak{E}_M$. 
\end{proof}

Continuing the study of the endomorphism ring of a finite $\Sigma$-Rickart module, we study the case when $\End_R(M)$ is a semiprimary ring (Theorem \ref{rickperf}). Recall that a ring $R$ is said to be \emph{semiprimary} if its Jacobson radical, $\Rad R$, is nilpotent and $R/\Rad R$ is a semisimple artinian ring.



%


Recall that a ring $R$ is called a \emph{PWD} (\emph{piecewise domain}) if it possesses a complete set $\{e_1,...,e_n\}$ of orthogonal idempotents such that $xy=0$ implies $x=0$ or $y=0$ whenever $x\in e_iRe_k$ and $y\in e_kRe_j$ (see \cite{gs}).

\begin{thm}\label{rickperf}
The following conditions are equivalent for a module $M$:
\begin{enumerate}
\item[(a)] $M$ has a decomposition $\bigoplus_{i=1}^n H_i^{(\ell_i)}$ with $H_i$ an indecomposable endoregular module, $H_i$ is $H_j$-Rickart for all $1\leq i,j\leq n$, and $H_i\ncong H_j$ for $i\neq j$;
\item[(b)] $S=\End_R(M)$ is isomorphic to a upper triangular matrix ring
\[\begin{pmatrix}
\Mat_{\ell_1}(D_1) & V_{12} & V_{13}& \cdots & V_{1n} \\
0 & \Mat_{\ell_2}(D_2) & V_{23}& \cdots & V_{2n} \\
0 & 0 &  \Mat_{\ell_3}(D_3)&\cdots & V_{3n} \\
\vdots & \vdots & \vdots& \ddots&\vdots  \\
0 & 0 & 0 &\cdots & \Mat_{\ell_n}(D_n) 
\end{pmatrix}\]
where $D_i$ is a division ring for $1\leq i\leq n$, and $V_{ij}$ is a $\Mat_{\ell_i}(D_i)$-$\Mat_{\ell_j}(D_j)$-bimodule for all $1\leq i< j\leq n$ satisfying $\mathbf{l}_S(x)\cap V_{ij}=0$ for any $0\neq x\in H_j$. 
In particular, $S$ is a semiprimary PWD. 
\end{enumerate}
\end{thm}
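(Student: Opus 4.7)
The strategy is to translate between the module decomposition $M = \bigoplus_i H_i^{(\ell_i)}$ and the block-matrix description of $S = \End_R(M)$, using two fundamental observations. First, for any indecomposable endoregular module $H$, $\End_R(H)$ is von Neumann regular with no nontrivial idempotents and hence a division ring; this gives $D_i = \End_R(H_i)$ a division ring and $\End_R(H_i^{(\ell_i)}) \cong \Mat_{\ell_i}(D_i)$ for the diagonal blocks. Second, under the relative Rickart hypothesis any nonzero morphism between two of the $H_i$'s must be a monomorphism, because its kernel would otherwise be a proper direct summand of an indecomposable source.

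For (a) $\Rightarrow$ (b), the crux is reordering the indices so that the matrix becomes upper triangular. I would form the digraph on $\{1, \ldots, n\}$ with arrows $i \to j$ (for $i \neq j$) whenever $\Hom_R(H_i, H_j) \neq 0$, and show it is acyclic. If $i_1 \to i_2 \to \cdots \to i_k \to i_1$ were a cycle with nonzero $f_t \in \Hom_R(H_{i_t}, H_{i_{t+1}})$, the composite $f_k \cdots f_1 \in \End_R(H_{i_1})$ would be monic and thus a nonzero element of the division ring $D_{i_1}$, i.e., a unit. This supplies $f_1$ with a left inverse, and together with $f_1$ being monic into the indecomposable $H_{i_2}$ forces $f_1$ to be an isomorphism, contradicting $H_{i_1} \ncong H_{i_2}$. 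A topological sort then yields an ordering in which $\Hom_R(H_j, H_i) = 0$ whenever $i > j$; the off-diagonal blocks become $V_{ij} = \Hom_R(H_j^{(\ell_j)}, H_i^{(\ell_i)})$ for $i < j$ with the natural $\Mat_{\ell_i}(D_i)$-$\Mat_{\ell_j}(D_j)$-bimodule structure, and the condition $\mathbf{l}_S(x) \cap V_{ij} = 0$ reflects the fact that every nonzero map $H_j \to H_i$ appearing in $V_{ij}$ is a monomorphism.

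For (b) $\Rightarrow$ (a), I would recover the module decomposition from the primitive idempotents $E^{(i)}_{kk}$ inside each $\Mat_{\ell_i}(D_i)$: setting $H_i = E^{(i)}_{11} M$, one has $\End_R(H_i) \cong D_i$ a division ring, so $H_i$ is indecomposable and endoregular, and $M \cong \bigoplus_i H_i^{(\ell_i)}$. Upper triangularity forces $\Hom_R(H_j, H_i) = 0$ for $i > j$, which prevents $H_i \cong H_j$ when $i \neq j$. To verify the relative Rickart property of $(H_i, H_j)$, any $f \colon H_i \to H_j$ is zero by upper triangularity when $j > i$, an element of the division ring $D_i$ (hence zero or invertible) when $j = i$, and forced to be monic by the faithfulness condition when $j < i$; in all cases $\Ker f$ is a direct summand of $H_i$. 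For the ``moreover'' claim, the strictly upper triangular part of $S$ is nilpotent of index at most $n$ and coincides with $\Rad S$ because the quotient $\bigoplus_i \Mat_{\ell_i}(D_i)$ is semisimple artinian, making $S$ semiprimary. PWD is witnessed by the primitive idempotents $\{E^{(i)}_{kk}\}$: products of nonzero elements across their corners correspond to compositions of nonzero, hence monic, maps among the indecomposable $H_i$'s, and such compositions are themselves monic and nonzero.

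The main obstacle I anticipate is the acyclicity argument producing the ordering, together with the careful translation of the bimodule faithfulness condition against the relative Rickart hypothesis; the rest is standard block-matrix bookkeeping and the classical fact that an upper triangular matrix ring with semisimple diagonal is semiprimary.
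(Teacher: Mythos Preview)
Your proposal is correct and follows essentially the same route as the paper. The only difference is that the paper obtains the triangular ordering by citing an external result (that $\Hom$ vanishes in one direction between non-isomorphic indecomposable endoregular modules which are mutually Rickart) together with the Gordon--Small structure theorem for PWDs, whereas you establish acyclicity of the $\Hom$-digraph directly via your cycle-composition argument and read off the block form from the decomposition; the remaining verifications (the $\mathbf{l}_S(x)\cap V_{ij}$ condition, the recovery of the $H_i$ from primitive idempotents, and the semiprimary PWD conclusion) are handled identically.
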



\begin{proof}
(a)$\Rightarrow$(b) Since each $H_i$ is indecomposable endoregular and $H_i$ is $H_j$-Rickart module, every nonzero homomorphism $\rho:H_i\to H_j$ is a monomorphism. This implies that $S$ is a PWD \cite{gs}. On the other hand, since $H_i\ncong H_j$ for all $1\leq i\neq j\leq n$,  $\Hom_R(H_i,H_j)=0$ or $\Hom_R(H_j,H_i)=0$  from \cite[Proposition 18]{lz1}. Without loss of generality, we can assume that the decomposition $M=\bigoplus_{i=1}^n H_i^{(\ell_i)}$ is such that $\Hom_R(H_i,H_j)=0$ for all $i<j$. Consider the complete set of orthogonal primitive idempotents $\{e_1,\dots,e_n\}$ of $S$ such that $H_i^{(\ell_i)}=e_iM$.  
It follows from \cite[Main Theorem]{gs} that
\[S=\begin{pmatrix}
P_1 & V_{12} & \cdots & V_{1n} \\
0 & P_2 & \cdots & V_{2n} \\
\vdots & \vdots & \ddots&\vdots  \\
0 & 0 & \cdots & P_n 
\end{pmatrix}\]
where $V_{ij}$ is a $P_i$-$P_j$-bimodule and 
$$P_i=\left(\begin{matrix}
D_i & W_{12} & \cdots & W_{1{\ell_i}} \\
W_{21} & D_i & \cdots & W_{2{\ell_i}} \\
\vdots & \vdots & \ddots&\vdots  \\
W_{{\ell_i}1} & W_{{\ell_i}2} & \cdots & D_i
\end{matrix}\right)$$
with $D_i$ a division ring and each $W_{jk}\cong D_i$ as $D_i$-$D_i$-bimodule. That is, $P_i\cong\Mat_{\ell_i}(D_i)$. 
Suppose that $\Hom_R(H_j,H_i)\neq 0$ with $1\leq i< j\leq n$. It follows from \cite[Corollary 2.10]{lrr3} that $H_j$ is $H_i^{(\ell_i)}$-Rickart. Therefore, every nonzero homomorphism in $\Hom_R(H_j,H_i^{(\ell_i)})$ is a monomorphism.
Let $0\neq x\in H_j$ and $\varphi\in S$ such that $\varphi\in\mathbf{l}_S(x)\cap V_{ij}$. 
Then $\varphi(x)=0$. If $V_{ij}=0$, there is nothing to prove. Suppose that  $V_{ij}=\Hom_R(H_j^{(\ell_j)},H_i^{(\ell_i)})\neq 0$.  Assume that $\varphi\neq 0$. Since $\varphi|_{H_j}$ is a monomorphism by the above comment,  $x=0$, a contradiction. Hence $\varphi=0$. Therefore $\mathbf{l}_S(x)\cap V_{ij}=0$.
Note that $\Rad(S)=\left(\begin{smallmatrix}
0 & V_{12} & \cdots & V_{1n} \\\vspace{-0.2cm}
0 & 0 & \cdots & V_{2n} \\
\vdots & \vdots & \ddots&\vdots  \\
0 & 0 & \cdots & 0 
\end{smallmatrix}\right)$ is nilpotent and $S/\Rad(S)\cong \Mat_{\ell_1}(D_1)\times\cdots \times \Mat_{\ell_n}(D_n)$ which is a semisimple artinian ring. Hence $S$ is a semiprimary ring.

(b)$\Rightarrow$(a) Let $\{e_{ij}\mid 1\leq i,j\leq m\}$ denote the matrix units where $m=\ell_1+\cdots+\ell_n$. Hence $M$ has a decomposition $M=e_{11}M\oplus\cdots\oplus e_{mm}M$. Denote $H_i=e_{ii}M$. Since $\End_R(H_i)\cong e_{ii}Se_{ii}\cong D_i$ is a division ring, $H_i$ is an indecomposable endoregular $R$-module for all $1\leq i\leq m$. 
By hypothesis, $H_j\cong H_k$ for $m_{i-1}< j, k \leq m_i$ where $m_i=\sum_{k=0}^i \ell_k$ and $\ell_0=0$ and for each $1\leq i\leq n$. Hence $M=M_1\oplus\cdots \oplus M_n$ where $M_i=\bigoplus_{k=m_{i-1}+1}^{m_i}H_k\cong H_i^{(\ell_i)}$. Without loss of generality we can take a summand of each $M_i$ and assume that $M=H_1^{(\ell_1)}\oplus\cdots\oplus H_n^{(\ell_n)}$. Let $\rho:H_j\to H_i$ be any nonzero homomorphism for $1\leq i< j\leq n$. Assume that $0\neq x\in H_j$ such that $\rho(x)=0$. Consider $\rho\oplus 0:H_j^{(\ell_j)}\to H_i^{(\ell_i)}$. Then $(\rho\oplus 0)(x)=0$. This implies that $(\rho\oplus 0)\in\mathbf{l}_S(x)\cap V_{ij}=0$, a contradiction. Therefore $x=0$. This 
 implies that $\rho$ is a monomorphism. Hence it is easy to see that $H_i$ is $H_j$-Rickart for all $1\leq i, j\leq n$.
\end{proof}

\begin{rem}
If $M$ is a finite direct sum of indecomposable endoregular modules, then $\text{End}_R(M)$ is a semi-perfect ring.
\end{rem}

\begin{cor}\label{corrickperf}
Suppose that $M=\bigoplus_{i=1}^n H_i$ with $H_i$ an indecomposable endoregular module, $H_i$ is $H_j$-Rickart for all $1\leq i,j\leq n$ and $H_i\ncong H_j$ for $i\neq j$. If there exists an ordering $\mathcal{I}_n=\{1,2,...,n\}$ for the class $\{H_i\}_{i\in \mathcal{I}_n}$ such that $H_i$ is $H_j$-injective for all $i<j$, then $M$ is a Rickart module and $\End_R(M)$ is isomorphic to a upper triangular  matrix ring
\[\begin{pmatrix}
D_1 & V_{12}& V_{13} & \cdots & V_{1n} \\
0 & D_2 & V_{23} & \cdots & V_{2n} \\
0 & 0  & D_3& \cdots & V_{3n} \\
\vdots & \vdots & \vdots & \ddots&\vdots  \\
0 & 0 & 0& \cdots & D_n 
\end{pmatrix}\]
where $D_i$ is a division ring for $1\leq i\leq n$, and $V_{ij}$ is a $D_i$-$D_j$-bimodule for all $1\leq i< j\leq n$. In particular, $\End_R(M)$ is a semiprimary PWD.
\end{cor}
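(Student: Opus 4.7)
The matrix-ring description of $S = \End_R(M)$, including the fact that $S$ is a semiprimary PWD, follows immediately from Theorem \ref{rickperf} specialized to $\ell_i = 1$ (so that $\Mat_{\ell_i}(D_i) = D_i$ for each $i$, and the diagonal entries are division rings). Thus the only new content of the corollary beyond Theorem \ref{rickperf} is the claim that $M$ itself is a Rickart module, and it is precisely here that the extra injectivity hypothesis enters.

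I prove $M$ is Rickart by induction on $n$. The case $n=1$ is immediate since $H_1$ is endoregular. For the inductive step, write $M = H_1 \oplus M'$ with $M' = \bigoplus_{i=2}^{n} H_i$; the hypotheses restrict to $\{H_2,\ldots,H_n\}$, so by induction $M'$ is Rickart. Because $H_1$ is indecomposable and $H_1$ is $H_j$-Rickart for each $j>1$, any $\rho \in \Hom_R(H_1, M')$ has components whose kernels are each in $\{0, H_1\}$, so $\Ker\rho \dleq H_1$; thus $H_1$ is $M'$-Rickart. Using Theorem \ref{ricric} together with the hypothesis that each $H_j$ is $H_1$-Rickart, one also obtains that $M'$ is $H_1$-Rickart. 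Finally, the standard fact that $N$-injectivity is preserved under finite direct sums in the $N$-slot (the dual of Lemma \ref{projprop}(ii)) shows that $H_1$ is $M'$-injective.

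Now let $\varphi \in \End_R(M)$. The ordering giving $\Hom_R(H_1, H_j) = 0$ for $j>1$ yields $\Hom_R(H_1, M') = 0$, so $\varphi$ has block form $\left(\begin{smallmatrix} a & b \\ 0 & d \end{smallmatrix}\right)$ with $a \in D_1$, $d \in \End_R(M')$, and $b \in \Hom_R(M', H_1)$. Put $K = \Ker d$; since $M'$ is Rickart, $K = e'M'$ for some idempotent $e' \in \End_R(M')$. If $a = 0$, then $\Ker\varphi = H_1 \oplus (\Ker b \cap K)$; since $M'$ is $H_1$-Rickart, Theorem \ref{ricric} yields that $K$ is $H_1$-Rickart too, so $\Ker b \cap K = \Ker(b|_K) \dleq K \dleq M'$, giving $\Ker\varphi \dleq M$. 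If $a \neq 0$ (hence invertible in the division ring $D_1$), then $\Ker\varphi = \{(-a^{-1}b(m'), m') : m' \in K\}$; using that $H_1$ is $M'$-injective, extend the partial map $-a^{-1}b|_K : K \to H_1$ to $\tilde b : M' \to H_1$, and verify that $e := \left(\begin{smallmatrix} 0 & \tilde b\, e' \\ 0 & e' \end{smallmatrix}\right)$ is an idempotent in $\End_R(M)$ with $eM = \Ker\varphi$.

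The main obstacle is the case $a \neq 0$: without the extension of $-a^{-1}b|_K$ from the submodule $K \leq M'$ to a map on all of $M'$, one cannot produce a splitting idempotent whose image realizes the graph-like subspace $\Ker\varphi$. This is exactly the role played by the relative injectivity hypothesis in the statement of the corollary, and it is what lifts the structural conclusion of Theorem \ref{rickperf} to the stronger conclusion that $M$ is Rickart.
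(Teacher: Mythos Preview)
The paper's proof is a one-line citation: the triangular structure and the semiprimary PWD conclusion come from Theorem \ref{rickperf} with $\ell_i=1$, while the assertion that $M$ is Rickart is delegated entirely to \cite[Corollary 2.13]{lrr3}. Your attempt to give a self-contained inductive proof of the Rickart claim is a genuinely different route, and most of it (the block decomposition, the Case $a\neq 0$ graph-idempotent construction using $H_1$ being $M'$-injective) is correct, but there is a real gap.

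The gap is in Case $a=0$, where you claim that $M'=\bigoplus_{j>1}H_j$ is $H_1$-Rickart and justify this via Theorem \ref{ricric}. That theorem goes the wrong way: it passes from a module to its direct summands and to submodules of the target, not from summands up to a direct sum. Knowing that each $H_j$ is $H_1$-Rickart does \emph{not} in general yield that $\bigoplus_{j>1}H_j$ is $H_1$-Rickart; relative Rickartness is not closed under finite direct sums in the domain variable without further hypotheses. Establishing this closure under the present injectivity ordering is precisely the nontrivial content of \cite[Corollary 2.13]{lrr3}, so your induction has not bypassed that external result but rather concealed its difficulty in an unjustified step. (A smaller point: the assertion $\Hom_R(H_1,H_j)=0$ for $j>1$ is correct but needs an argument---a nonzero map $H_1\to H_j$ is a monomorphism by the relative Rickart hypothesis, and $H_1$ being $H_j$-injective then produces a retraction, contradicting $H_1\ncong H_j$---rather than following immediately from ``the ordering''.)
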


\begin{proof}
It follows directly from \cite[Corollary 2.13]{lrr3} and Theorem \ref{rickperf}. 
\end{proof}

The next example illustrates Theorem \ref{rickperf} and Corollary \ref{corrickperf}.

\begin{exam}\label{exthcor}
Let $F$ be a field and $R=\left( \begin{smallmatrix}
F & F \\
0 & F
\end{smallmatrix}\right)$. Consider the right $R$-module 
\[M=\left( \begin{smallmatrix}
F & F \\
0 & 0
\end{smallmatrix}\right) \oplus \left( \begin{smallmatrix}
0 & 0 \\
0 & F
\end{smallmatrix}\right) \oplus \left( \begin{smallmatrix}
0 & 0 \\
0 & F
\end{smallmatrix}\right)=\left( \begin{smallmatrix}
F & F \\
0 & 0
\end{smallmatrix}\right) \oplus \left( \begin{smallmatrix}
0 & 0 \\
0 & F
\end{smallmatrix}\right)^{(2)}.\] 
Denote $H_1=\left( \begin{smallmatrix}
F & F \\
0 & 0
\end{smallmatrix}\right)$ and $H_2=\left( \begin{smallmatrix}
0 & 0 \\
0 & F
\end{smallmatrix}\right)$. Then $\End_R(H_1)=F=\End_R(H_2)$, $\Hom_R(H_2,H_1)=F$ and $\Hom_R(H_1,H_2)=0$. Thus, $M$ satisfies the condition (a) of Theorem \ref{rickperf}. Hence the endomorphism ring of $M$ is 
\[\End_R(M)\cong\left(\begin{smallmatrix}
F & F & F \\
0 & F & F \\
0 & F & F
\end{smallmatrix} \right).\]
Moreover, $H_1$ is $H_2^{(2)}$-injective because $H_2$ is simple, therefore $M=H_1\oplus H_2^{(2)}$ is a Rickart module. In particular, $R$ is a right Rickart ring.
\end{exam} 

Inspired by the last example, we have the following proposition.

\begin{prop}\label{mandsimple}
Let $M$ be a finite $\Sigma$-Rickart module and $P$ be any simple module such that $\Hom_{R}(M,P)=0$. Then $M^{(\ell)}\oplus P^{(n)}$ is a finite $\Sigma$-Rickart module for any $\ell, n>0$.
\end{prop}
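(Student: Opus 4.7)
Our approach is to prove directly that $M^{(a)}\oplus P^{(b)}$ is a Rickart module for every $a,b>0$; this immediately covers $(M^{(\ell)}\oplus P^{(n)})^{(k)}\cong M^{(\ell k)}\oplus P^{(nk)}$ for all $k>0$. Fix $\varphi\in\End_R(M^{(a)}\oplus P^{(b)})$. The hypothesis $\Hom_R(M,P)=0$ forces $\Hom_R(M^{(a)},P^{(b)})=0$, so $\varphi$ has the block form $\bigl(\begin{smallmatrix}\alpha&\beta\\0&\delta\end{smallmatrix}\bigr)$ with $\alpha\in\End_R(M^{(a)})$, $\delta\in\End_R(P^{(b)})$, and $\beta\in\Hom_R(P^{(b)},M^{(a)})$, and it will suffice to show $\Ker\varphi\dleq M^{(a)}\oplus P^{(b)}$.

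Since $P$ is simple, $P^{(b)}$ is semisimple, so $K:=\Ker\delta\dleq P^{(b)}$; as $(m,p)\in\Ker\varphi$ forces $p\in K$, one has $\Ker\varphi\subseteq M^{(a)}\oplus K$, reducing the problem to proving $\Ker\varphi\dleq M^{(a)}\oplus K$. We next look at the map $\bar\beta\colon K\to M^{(a)}/\Img\alpha$ induced by $\beta|_{K}$. Since $K$ is again semisimple, $K':=\Ker\bar\beta=\{k\in K:\beta(k)\in\Img\alpha\}$ is a direct summand of $K$; choose a complement $L'$ so that $\bar\beta|_{L'}$ is injective. For any $(m,k'+l')\in\Ker\varphi$, the fact that $\alpha(m)+\beta(k')\in\Img\alpha$ forces $\beta(l')\in\Img\alpha$, whence $l'=0$ by injectivity of $\bar\beta|_{L'}$. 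Hence $\Ker\varphi\subseteq M^{(a)}\oplus K'$, and $M^{(a)}\oplus K'\dleq M^{(a)}\oplus P^{(b)}$ since $K'\dleq K\dleq P^{(b)}$.

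For the final step we use that $M^{(a)}$ is Rickart (Lemma~\ref{summandher}(ii)) to split $M^{(a)}=\Ker\alpha\oplus M^{\ast}$, making $\alpha|_{M^{\ast}}\colon M^{\ast}\to\Img\alpha$ an isomorphism. Since $\beta(K')\subseteq\Img\alpha$ by construction of $K'$, the lift $\tilde\beta:=(\alpha|_{M^{\ast}})^{-1}\circ\beta|_{K'}\colon K'\to M^{\ast}$ is well defined and satisfies $\alpha\tilde\beta=\beta|_{K'}$. The automorphism $\Theta\colon M^{(a)}\oplus K'\to M^{(a)}\oplus K'$, $(m,k')\mapsto(m-\tilde\beta(k'),k')$, transforms the map $\psi(m,k'):=\alpha(m)+\beta(k')$ into $\psi\Theta(m,k')=\alpha(m)$; moreover, because $\delta$ vanishes on $K'$, $\Ker\varphi$ coincides with $\Ker\psi$. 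Therefore $\Ker\varphi=\Theta(\Ker\alpha\oplus K')\dleq M^{(a)}\oplus K'\dleq M^{(a)}\oplus P^{(b)}$.

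The main obstacle is the isolation of the summand $K'$: the semisimple piece $L'$ of $K$ on which $\beta$ escapes $\Img\alpha$ must be pruned away before the Rickart property of $M^{(a)}$ can supply the lift $\tilde\beta$. This is precisely where $P$ being simple enters essentially, since it guarantees that every submodule of $P^{(b)}$ (and in particular $K$) is a direct summand. The coordinate change by $\Theta$ is then a formal manoeuvre that converts $\Ker\psi$ into the visibly split summand $\Ker\alpha\oplus K'$.
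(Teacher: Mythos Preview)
Your proof is correct and complete: the block decomposition of $\varphi$, the successive pruning $P^{(b)}\supseteq K\supseteq K'$, the lift $\tilde\beta$, and the shear automorphism $\Theta$ all do exactly what you claim, and together they show $\Ker\varphi\dleq M^{(a)}\oplus P^{(b)}$ for arbitrary $a,b$.

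However, your route differs substantially from the paper's. The paper does not compute the kernel at all; instead it observes three facts---$P^{(kn)}$ is $M^{(k\ell)}$-Rickart (any module is semisimple-Rickart), $M^{(k\ell)}$ is $P^{(kn)}$-Rickart (trivially, since $\Hom_R(M,P)=0$), and $M^{(k\ell)}$ is $P^{(kn)}$-injective (again by semisimplicity of $P^{(kn)}$)---and then invokes \cite[Corollary~2.13]{lrr3}, a ready-made criterion guaranteeing that a direct sum of two mutually relatively Rickart modules is Rickart provided one is injective relative to the other. What you have essentially done is rederive the content of that corollary by hand in this special case: your construction of $K'$ and the lift $\tilde\beta$ is precisely where relative injectivity of $M^{(a)}$ with respect to the semisimple module $P^{(b)}$ is being used implicitly. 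Your argument is therefore longer but fully self-contained, requiring no citation beyond the Rickart property of $M^{(a)}$; the paper's argument is a two-line appeal to the existing machinery of relative Rickart and relative injectivity.
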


\begin{proof}
Let $k>0$. Hence $(M^{(\ell)}\oplus P^{(n)})^{(k)}=M^{(k\ell)}\oplus P^{(kn)}$. It is clear that $P^{(kn)}$ is $M^{(k\ell)}$-Rickart and by hypothesis, $M^{(k\ell)}$ is $P^{(kn)}$-Rickart. Since $P^{(kn)}$ is semisimple, $M^{(k\ell)}$ is $P^{(kn)}$-injective. It follows from \cite[Corollary 2.13]{lrr3} that $M^{(k\ell)}\oplus P^{(kn)}$ is a Rickart module. Thus, $M^{(\ell)}\oplus P^{(n)}$ is a finite $\Sigma$-Rickart module for any $\ell,n>0$.
\end{proof}

\begin{rem}
It follows from Proposition \ref{mandsimple} that the module $M$ in Example \ref{exthcor} is not just Rickart, but finite $\Sigma$-Rickart. In particular, the ring $R=\left( \begin{smallmatrix}
F & F \\
0 & F
\end{smallmatrix}\right)$ is right hereditary.
\end{rem}

\begin{cor}\label{ringmatrix}
The following conditions are equivalent for a ring $R$:
\begin{enumerate}
\item[(a)] $R=\bigoplus_{i=1}^n I_i^{(\ell_i)}$ with $I_i$ an endoregular right ideal and $I_i$ is $I_j$-Rickart for all $1\leq i,j\leq n$;
\item[(b)] $R$ is isomorphic to a formal matrix ring
\[\begin{pmatrix}
\Mat_{\ell_1}(D_1) & V_{12} & \cdots & V_{1n} \\
0 & \Mat_{\ell_2}(D_2) & \cdots & V_{2n} \\
\vdots & \vdots & \ddots&\vdots \\
0 & 0 & \cdots & \Mat_{\ell_n}(D_n) 
\end{pmatrix}\]
where $D_i$ is a division ring for $1\leq i\leq n$, and for all $1\leq i< j\leq n$, $V_{ij}$ is a $\Mat_{\ell_i}(D_i)$-$\Mat_{\ell_j}(D_j)$-bimodule satisfying $\mathbf{l}_R(x)\cap V_{ij}=0$ for any $0\neq x\in I_j$. In particular, $R$ is a semiprimary PWD. 
\end{enumerate}
\end{cor}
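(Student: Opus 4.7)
The plan is to recognize this corollary as the special case $M = R_R$ of Theorem \ref{rickperf}, exploiting the canonical ring isomorphism $\End_R(R_R) \cong R$ given by $\varphi \mapsto \varphi(1)$ (under which composition corresponds to ring multiplication, and the left annihilator $\mathbf{l}_S(x)$ in $S = \End_R(R_R)$ is identified with the left annihilator $\mathbf{l}_R(x)$ in $R$ for any $x \in R$). Under this identification, every right ideal of $R$ is a submodule of $R_R$, so the decomposition $R = \bigoplus_{i=1}^n I_i^{(\ell_i)}$ becomes a module decomposition of $R_R$ of the form required by Theorem \ref{rickperf}(a).

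For (a)$\Rightarrow$(b), I would first argue that the hypotheses force each $I_i$ to be indecomposable and force the family $\{I_i\}$ to be pairwise non-isomorphic. Indecomposability follows because $I_i$ is endoregular, so $\End_R(I_i)$ is von Neumann regular, and the matrix description to be obtained will make $\End_R(I_i)$ a division ring $D_i$, forcing no nontrivial idempotents. Pairwise non-isomorphism can be arranged without loss of generality by collecting isomorphic summands into a single $I_i^{(\ell_i)}$ block. Once these conditions are in place, I simply invoke Theorem \ref{rickperf} to obtain the upper triangular matrix form for $\End_R(R_R)$, then transport it to $R$ via the isomorphism above. The condition $\mathbf{l}_S(x) \cap V_{ij} = 0$ transfers to $\mathbf{l}_R(x) \cap V_{ij} = 0$, and the semiprimary PWD conclusion carries over intact.

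For (b)$\Rightarrow$(a), I would use the matrix units $e_{kk}$ ($1 \le k \le \ell_1 + \cdots + \ell_n$) as a complete orthogonal set of primitive idempotents. Each $e_{kk}R$ is indecomposable with $\End_R(e_{kk}R) \cong e_{kk} R e_{kk} \cong D_i$ (where $k$ belongs to the $i$-th block), hence is endoregular. Grouping the $\ell_i$ isomorphic summands inside each diagonal block yields $I_i$ with $R = \bigoplus_{i=1}^n I_i^{(\ell_i)}$. The relative Rickart condition $I_i$ is $I_j$-Rickart then follows by the same monomorphism argument used in the proof of (b)$\Rightarrow$(a) of Theorem \ref{rickperf}: any nonzero homomorphism $I_j \to I_i$ extended by zero lies in $V_{ij}$, and the hypothesis $\mathbf{l}_R(x) \cap V_{ij} = 0$ forces it to be injective, so kernels are zero and in particular direct summands.

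The main obstacle will be the bookkeeping in (a)$\Rightarrow$(b): the corollary's statement does not explicitly include the indecomposability of each $I_i$ nor the pairwise non-isomorphism that is present in the hypothesis of Theorem \ref{rickperf}(a). I would need to make clear that these are either implicit in the intended reading of the decomposition $\bigoplus I_i^{(\ell_i)}$ (collecting isomorphic indecomposable summands) or derivable from the endoregularity together with the target matrix form, so that Theorem \ref{rickperf} applies cleanly. Once this translation is settled, everything else is a direct transcription from the module case to the ring case.
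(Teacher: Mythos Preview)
Your approach is correct and matches the paper's: the corollary is stated without proof, so it is meant as the direct specialization $M=R_R$ of Theorem~\ref{rickperf} via the canonical isomorphism $R\cong\End_R(R_R)$ (under which $\mathbf{l}_S(x)$ becomes $\mathbf{l}_R(x)$).

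One caution on your bookkeeping discussion: the argument ``the matrix description to be obtained will make $\End_R(I_i)$ a division ring, forcing no nontrivial idempotents'' is circular, since it uses conclusion~(b) to verify a hypothesis needed to invoke the theorem for (a)$\Rightarrow$(b). The hypotheses of the corollary do \emph{not} force indecomposability of an arbitrary endoregular summand (e.g.\ an endoregular module can have a full matrix ring as endomorphism ring). The correct resolution is the one you name at the end: the notation $\bigoplus_{i=1}^n I_i^{(\ell_i)}$ is to be read, as in Theorem~\ref{rickperf}(a), as a decomposition into indecomposable endoregular pieces grouped by isomorphism type, so indecomposability and pairwise non-isomorphism are implicit. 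With that reading, both directions are an immediate transcription of the theorem, exactly as you outline.
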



To illustrate the last results, we have the following examples:

\begin{exam}
(i) Let $K$ be a field. It follows from \cite[Ch.I Lemma 1.12 and Ch. VII Theorem 1.7]{repth} that every path algebra $K\mathsf{Q}$ of a finite, connected and acyclic quiver $\mathsf{Q}$ satisfies the conditions of Corollary \ref{ringmatrix}.

(ii) Let $K$ and $F$ be division rings and $U$ be any left $K$- right $F$-bimodule. Then the formal matrix ring $R=\left(\begin{smallmatrix}
K & U \\
0 & F
\end{smallmatrix} \right)$ trivially satisfies the condition (b) of Corollary \ref{ringmatrix}. Moreover, the decomposition $R=\left(\begin{smallmatrix}
K & U \\
0 & 0
\end{smallmatrix} \right)\oplus \left(\begin{smallmatrix}
0 & 0 \\
0 & F
\end{smallmatrix} \right)$ makes $R$ to be a hereditary semiprimary PWD by Corollary \ref{ringmatrix} and Proposition \ref{mandsimple}. 

(iii) Let $K$ be a field and consider the ring $R=\left( \begin{smallmatrix}
K & 0 & 0 \\
K & K & K \\
0 & 0 & K
\end{smallmatrix}\right)$. Then $R$ has a decomposition in hollow endoregular right ideals
\[R=\left(\begin{smallmatrix}
0 & 0 & 0 \\
K & K & K \\
0 & 0 & 0
\end{smallmatrix}\right)\oplus\left(\begin{smallmatrix}
K & 0 & 0 \\
0 & 0 & 0 \\
0 & 0 & 0
\end{smallmatrix} \right)\oplus\left(\begin{smallmatrix}
0 & 0 & 0 \\
0 & 0 & 0 \\
0 & 0 & K
\end{smallmatrix}\right).\]
Denote those summands by $I_1,I_2$ and $I_3$ from the left to the right respectively. Hence $I_2$ and $I_3$ are simple $R$-modules. By Corollary \ref{ringmatrix} and applying Proposition \ref{mandsimple} twice, we have that $R$ is a (semi-)hereditary semiprimary PWD.
Moreover by Corollary \ref{ringmatrix}(b), $R\cong\left(\begin{smallmatrix}
K & K & K \\
0 & K & 0 \\
0 & 0 & K
\end{smallmatrix} \right)$ where the isomorphism is given by $\left(\begin{smallmatrix}
a & 0 & 0 \\
b & c & d \\
0 & 0 & e
\end{smallmatrix} \right)\mapsto\left(\begin{smallmatrix}
c & b & d \\
0 & a & 0 \\
0 & 0 & e
\end{smallmatrix} \right)$.

(iv) Let $K$ be a field and $K[x]$ be the polynomial ring with coefficients in $K$. Consider the ring $R=\left(\begin{smallmatrix}
K & \left\langle x \right\rangle & 0 \\
0 & K & K[x]/\left\langle x \right\rangle \\
0 & 0 & K
\end{smallmatrix}\right)$ where $\left\langle x \right\rangle$ is the ideal generated by $x$. Hence $R$ has a decomposition in indecomposable endoregular ideals
\[R=\left(\begin{smallmatrix}
K & \left\langle x \right\rangle & 0 \\
0 & 0 & 0 \\
0 & 0 & 0
\end{smallmatrix}\right)\oplus\left(\begin{smallmatrix}
0 & 0 & 0 \\
0 & K & K[x]/\left\langle x \right\rangle \\
0 & 0 & 0
\end{smallmatrix}\right)\oplus\left(\begin{smallmatrix}
0 & 0 & 0 \\
0 & 0 & 0 \\
0 & 0 & K
\end{smallmatrix}\right).\]
Let $I_1,I_2,I_3$ denote those summand from the left to the right respectively. Let $0\neq \overline{f(x)}\in K[x]/\left\langle x \right\rangle$ then $\left(\begin{smallmatrix}
0 & x & 0\\
0 & 0 & 0\\
0 & 0 & 0
\end{smallmatrix}\right)\in\mathbf{l}_R\left(\begin{smallmatrix}
0 & 0 & 0\\
0 & 0 & \overline{f(x)} \\
0 & 0 & 0
\end{smallmatrix} \right)\cap I_1$. Thus, $R$ does not satisfies the condition (b) in Corollary \ref{ringmatrix}. Note that $R$ is a semiprimary ring.
\end{exam}


%

\begin{defn}[{\cite[Definition 2.23]{m}}]
A family of modules $\{M_\alpha\mid\alpha\in\Lambda\}$ is said to be \emph{locally-semi-Transfinitely-nilpotent} (\emph{lsTn}) if for any subfamily $M_{\alpha_i}$ $(i\in\mathbb{N})$ with distinct $\alpha_i$ and any family of non-isomorphisms $\varphi_i:M_{\alpha_i}\to M_{\alpha_{i+1}}$, and for every $x\in M_{\alpha_1}$, there exists $n\in\mathbb{N}$ (depending on $x$) such that $\varphi_n\cdots\varphi_2\varphi_1(x)=0$.
\end{defn}

\begin{prop}\label{ricksp}
Consider the following conditions for a Rickart module $M$:
\begin{enumerate}
\item[(i)] $M=\bigoplus_{i=1}^n H_i$ with $H_i$ a hollow endoregular module;
\item[(ii)] $\End_R(M)$ is a semiprimary ring.
\end{enumerate}
Then \emph{(i)}$\Rightarrow$\emph{(ii)}. In addition, if $M$ is finitely generated then the two conditions are equivalent.
\end{prop}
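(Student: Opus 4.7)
The plan is to treat the two directions separately, handling (i)$\Rightarrow$(ii) via Theorem~\ref{rickperf} and the converse by combining a primitive-idempotent decomposition with a delicate hollowness argument.

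For (i)$\Rightarrow$(ii), I would first regroup isomorphic summands, writing $M=\bigoplus_k \widetilde{H}_k^{(\ell_k)}$ with the $\widetilde{H}_k$ pairwise non-isomorphic. A hollow endoregular module is in particular indecomposable endoregular, and since $M$ is Rickart, Theorem~\ref{ricric} guarantees each $\widetilde{H}_k$ is $\widetilde{H}_j$-Rickart for all $k,j$. The hypothesis of Theorem~\ref{rickperf}(a) is therefore satisfied, and the ``in particular'' clause of that theorem yields that $\End_R(M)$ is semiprimary (in fact a semiprimary PWD).

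For (ii)$\Rightarrow$(i) under the finite generation hypothesis, I would use that a semiprimary ring is semiperfect. Taking a complete set $\{e_1,\ldots,e_n\}$ of primitive orthogonal idempotents of $S=\End_R(M)$ yields the decomposition $M=\bigoplus_{i=1}^{n}H_i$ with $H_i=e_iM$ and $\End_R(H_i)\cong e_iSe_i$ local, so each $H_i$ is indecomposable. As a direct summand of the Rickart module $M$, each $H_i$ is itself Rickart, and indecomposability forces every nonzero endomorphism of $H_i$ to be a monomorphism. Moreover, $e_iSe_i$ inherits the semiprimary property (its Jacobson radical is $e_i(\Rad S)e_i$, which is nilpotent), so any element of $\Rad\End_R(H_i)$ is simultaneously nilpotent and monic, hence zero. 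Therefore $\End_R(H_i)$ is a division ring and $H_i$ is endoregular.

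The remaining and most delicate step is verifying that each $H_i$ is hollow. Since $H_i$ is finitely generated, $H_i/\Rad H_i$ is a nonzero semisimple $R$-module, and hollowness is equivalent to this top being simple. A tempting attack is to realize a simple top $T$ of $H_i$ as a submodule of some $H_j$ in the decomposition of $M$, thereby producing a nonzero non-monic homomorphism $H_i\to H_j$ that would contradict Theorem~\ref{ricric}; however, this can fail, as illustrated by Example~\ref{exthcor}, where the simple top of $e_1R$ does not embed in $R$. The main obstacle is therefore to circumvent this issue, presumably by lifting a hypothetical nontrivial idempotent of $\End_R(H_i/\Rad H_i)$ back to $\End_R(H_i)=D_i$ (which has only $0$ and $1$ as idempotents), thereby forcing the top to be simple. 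Executing this lifting rigorously, likely via the semiprimary structure of $S$ and the lsTn framework introduced just above the proposition, is the key technical difficulty.
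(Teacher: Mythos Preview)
Your argument for (i)$\Rightarrow$(ii) is correct and matches the paper's. For (ii)$\Rightarrow$(i), your route to endoregularity of each $H_i$ is in fact cleaner than the paper's: you observe that $e_iSe_i$ is local semiprimary, so its radical is nilpotent, and a nonzero radical element would be simultaneously nilpotent and a monomorphism---impossible. The paper instead passes to $M^{(\mathbb{N})}$, shows it is quasi-discrete, extracts the lsTn property for the family of copies of the $H_i$ via \cite[Theorem~4.53 and Corollary~4.49]{m}, and uses lsTn to force every monomorphism of $H_i$ to be onto. Both arguments work, but yours avoids the infinite direct sum entirely.

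The genuine gap is hollowness. Your proposed fix via lifting idempotents from $\End_R(H_i/\Rad H_i)$ back to $D_i$ is not workable as stated: the natural map $\End_R(H_i)\to\End_R(H_i/\Rad H_i)$ has no reason to be surjective, so idempotents need not lift; and finite generation alone does not guarantee that $H_i/\Rad H_i$ is semisimple. The paper sidesteps this completely by \emph{not} proving hollowness summand-by-summand. Instead it first shows that $M$ itself is quasi-discrete: since $S$ is semiprimary (hence right perfect) and $M$ is finitely generated, \cite[43.8]{wisbauerfoundations} gives $M$ the $D_1$ condition; combined with the $D_2$ condition from Rickart, $M$ is (quasi-)discrete. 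Then the structure theorem for quasi-discrete modules \cite[Theorem~4.15]{m} delivers a decomposition $M=\bigoplus_{i=1}^n H_i$ into \emph{hollow} modules directly. The lsTn framework you mention is used in the paper only for endoregularity, not for hollowness. The key idea you are missing is therefore to obtain $D_1$ globally on $M$ from the perfect endomorphism ring and invoke the Mohamed--M\"uller hollow decomposition, rather than decomposing via primitive idempotents first and then struggling to establish hollowness of each piece afterward.
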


\begin{proof}
(i)$\Rightarrow$(ii) Since $M$ is a Rickart module, $H_i$ is $H_j$-Rickart. It follows from Theorem \ref{rickperf} that $S$ is semiprimary.

In addition, suppose $M$ is finitely generated.
(ii)$\Rightarrow$(i) Suppose $S=\End_R(M)$ is a semiprimary ring. Then $S$ is right perfect. From Lemma \cite[43.8]{wisbauerfoundations} $M$ has $D_1$ condition. 
Also $M$ is quasi-discrete because $M$ is Rickart.
Therefore $M$ has an irredundant decomposition $M=\bigoplus_{i=1}^n H_i$ with $H_i$ a hollow module such that complements summands and is unique up to isomorphism by \cite[Theorem 4.15]{m}. Since
\[M^{(\mathbb{N})}=\bigoplus_{i=1}^n H_i^{(\mathbb{N})}\]
and by \cite[43.8]{wisbauerfoundations}, $\Rad M^{(\mathbb{N})}\ll M^{(\mathbb{N})}$, the module $M^{(\mathbb{N})}$ is quasi-discrete and the family 
\[\{H_1=H_{1_j}\mid j\in\mathbb{N}\}\cup\cdots\cup\{H_n=H_{n_j}\mid j\in\mathbb{N}\}\]
satisfies lsTn by \cite[Theorem 4.53 and Corollary 4.49]{m}. Since $H_i$ is indecomposable Rickart for all $1\leq i\leq n$, every nonzero endomorphism $\varphi:H_i\to H_i$ is a monomorphism. By lsTn, $\varphi$ must be an isomorphism. Thus $H_i$ is an endoregular module for all $1\leq i\leq n$.
\end{proof}

With the following examples we will show that the hypothesis on $M$ to be Rickart and on $H_i$ to be endoregular in Proposition \ref{ricksp} are not superfluous.

\begin{exam}
(i) Set $M=\mathbb{Z}_4$ as $\mathbb{Z}$-module. It is clear that $\End_\mathbb{Z}(M)=\mathbb{Z}_4$ is a semiprimary ring and $M$ is a hollow module. But $M$ is neither Rickart nor endoregular.

(ii) Let $R=\mathbb{Z}_{(p)}$ be the localization of integers at a prime $p$ and set $M=R_R$. Then $M$ is a finitely generated Rickart module. Note that $M$ is a hollow $R$-module but is not endoregular. Moreover $R=\End_R(M)$ is not a semiprimary ring because $\text{Rad}(\mathbb{Z}_{(p)})=p\mathbb{Z}_{(p)}$.
\end{exam}

It follows from an Auslander's result \cite[5.72]{l} that a semiprimary right semi-hereditary ring is right and left hereditary. The corollaries below give an extension of Auslander's result for the case of $\Sigma$-Rickart and fintie $\Sigma$-Rickart modules.

\begin{cor}\label{rickperf1}
Let $M$ be a finite $\Sigma$-Rickart module. If $M=\bigoplus_{i=1}^n H_i$ with $H_i$ a hollow endoregular module then $\End_R(M^{(\ell)})\cong\Mat_\ell(S)$ is a semiprimary \emph{(}right\emph{)} hereditary PWD for all $\ell>0$ where $S=\End_R(M)$.
\end{cor}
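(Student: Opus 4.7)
The plan is to stack together the structural results already in hand and invoke Auslander's theorem at the end to upgrade semi-hereditary to hereditary. First, since $M$ is finite $\Sigma$-Rickart, Lemma \ref{summandher}(ii) gives that $M^{(\ell)}$ is itself finite $\Sigma$-Rickart, and in particular $M^{(\ell)}$ is a Rickart module. Writing $M^{(\ell)}=\bigoplus_{i=1}^{n}H_i^{(\ell)}$, we see that $M^{(\ell)}$ is a finite direct sum of hollow endoregular modules. Therefore the implication (i)$\Rightarrow$(ii) in Proposition \ref{ricksp} applies and yields that $\End_R(M^{(\ell)})\cong\Mat_\ell(S)$ is a semiprimary ring.

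Next I would extract right semi-hereditariness. Applying Proposition \ref{manysemiher}(i) to the finite $\Sigma$-Rickart module $M^{(\ell)}$ gives that $\End_R(M^{(\ell)})\cong\Mat_\ell(S)$ is a right semi-hereditary ring. Combining the two previous steps, $\Mat_\ell(S)$ is simultaneously semiprimary and right semi-hereditary, so by Auslander's theorem \cite[5.72]{l} (quoted in the text just before the corollary) it is right (and in fact also left) hereditary.

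It remains to establish the PWD property. For this I would regroup the summands by isomorphism classes: after a harmless relabelling, write $M=\bigoplus_{k=1}^{m}K_k^{(t_k)}$ where the $K_k$ are pairwise non-isomorphic hollow endoregular modules. Then $M^{(\ell)}=\bigoplus_{k=1}^{m}K_k^{(\ell t_k)}$. Because $M^{(\ell)}$ is Rickart, Theorem \ref{ricric} implies that each $K_k$ is $K_j$-Rickart for all $1\leq k,j\leq m$. Thus the hypothesis of Theorem \ref{rickperf}(a) is satisfied by $M^{(\ell)}$, and the conclusion of Theorem \ref{rickperf}(b) gives that $\End_R(M^{(\ell)})\cong\Mat_\ell(S)$ is a semiprimary PWD.

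Putting the three conclusions together: $\Mat_\ell(S)$ is semiprimary, right hereditary, and a PWD, which is precisely what is claimed. There is no serious obstacle; the only subtlety is the regrouping step in the final paragraph, which is needed because Theorem \ref{rickperf} is stated under the hypothesis $H_i\ncong H_j$ for $i\neq j$, whereas the corollary allows isomorphic summands.
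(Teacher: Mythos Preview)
Your proof is correct and follows essentially the same route as the paper: use the decomposition of $M^{(\ell)}$ into indecomposable endoregular summands together with the relative Rickart property to get semiprimary PWD via Theorem~\ref{rickperf}, use the finite $\Sigma$-Rickart hypothesis to get right semi-hereditariness of $\Mat_\ell(S)$, and finish with Auslander's theorem. The only differences are cosmetic: the paper invokes Theorem~\ref{rickperf} once to obtain semiprimary and PWD simultaneously (your detour through Proposition~\ref{ricksp} for semiprimary is harmless but redundant, since that proposition itself rests on Theorem~\ref{rickperf}), and the paper cites Theorem~\ref{endoringsh} rather than Proposition~\ref{manysemiher}(i) for semi-hereditariness. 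Your explicit regrouping of isomorphic summands before applying Theorem~\ref{rickperf} is a point the paper leaves implicit, so your version is in fact slightly more careful there.
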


\begin{proof}
Let $\ell>0$. Then $M^{(\ell)}=\bigoplus_{i=1}^n H_i^{(\ell)}$ with each $H_i$ hollow endoregular. Since $M$ is finite $\Sigma$-Rickart and finitely generated, $M^{(\ell)}$ is a Rickart module. From Theorem \ref{rickperf} $\End_R(M^{(\ell)})$ is a semiprimary PWD. On the other hand, $\End_R(M^{(\ell)})=\Mat_\ell(S)$ is a semi-hereditary ring by Theorem \ref{endoringsh}.
From \cite[5.72]{l} $\End_R(M^{(\ell)})$ is a semiprimary hereditary PWD.
\end{proof}

\begin{cor}\label{fsrissr}
Let $M$ be a finitely generated module such that $M=\bigoplus_{i=1}^n H_i$ with $H_i$ a hollow endoregular module. The following conditions are equivalent:
\begin{enumerate}
\item[(a)] $M$ is a $\Sigma$-Rickart module;
\item[(b)] $M$ is a finite $\Sigma$-Rickart module.
\end{enumerate} 
\end{cor}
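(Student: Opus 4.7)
The plan is to exploit the characterizations of $\Sigma$-Rickart and finite $\Sigma$-Rickart modules via their endomorphism rings, together with the structure theorem Corollary \ref{rickperf1} obtained in the preceding paragraph.

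First, the implication (a)$\Rightarrow$(b) is immediate from the diagram (\ref{implications}) since every $\Sigma$-Rickart module is finite $\Sigma$-Rickart by definition, and this direction requires neither the finite generation of $M$ nor the hollow-endoregular decomposition.

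For the nontrivial direction (b)$\Rightarrow$(a), I would argue as follows. Since $M$ is finite $\Sigma$-Rickart with $M=\bigoplus_{i=1}^n H_i$ and each $H_i$ hollow endoregular, Corollary \ref{rickperf1} (applied with $\ell=1$) gives that $S=\End_R(M)$ is a semiprimary \emph{hereditary} PWD. In particular $S$ is a right hereditary ring. On the other hand, because $M$ is a finite $\Sigma$-Rickart module, Theorem \ref{endoringsh} guarantees that $_SM$ is flat. Thus we have verified both conditions of Theorem \ref{endoringh}(b), and since $M$ is assumed finitely generated, Theorem \ref{endoringh} yields that $M$ is a $\Sigma$-Rickart module, which is (a).

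I expect no serious obstacle: the entire argument is an assembly of previously established equivalences. The only subtlety worth flagging is that we must quote Corollary \ref{rickperf1} to upgrade \emph{semi-hereditary} (which is all that Theorem \ref{endoringsh} directly gives) to \emph{hereditary}, and this upgrade is precisely where the hollow-endoregular decomposition is used, via Auslander's theorem that a semiprimary right semi-hereditary ring is right hereditary. The finite generation of $M$ is needed only to invoke Theorem \ref{endoringh} at the last step, as that characterization is stated for finitely generated modules.
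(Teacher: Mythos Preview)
Your proof is correct and follows essentially the same route as the paper's: the paper also derives (b)$\Rightarrow$(a) by invoking Corollary~\ref{rickperf1} to get that $S=\End_R(M)$ is semiprimary hereditary, and then applies Theorem~\ref{endoringh} (i.e.\ \cite[Theorem~4.6]{lm1}) to conclude $M$ is $\Sigma$-Rickart. The only difference is that you make explicit the verification that $_SM$ is flat via Theorem~\ref{endoringsh}, which the paper leaves implicit in its appeal to Theorem~\ref{endoringh}.
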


\begin{proof}
(a)$\Rightarrow$(b) is clear. For, (b)$\Rightarrow$(a) $\End_R(M)$ is a semiprimary hereditary ring, by Corollary \ref{rickperf1}. It follows from \cite[Theorem 4.6]{lm1} that $M$ is a $\Sigma$-Rickart module. 
\end{proof}

\begin{cor}\label{srfsr}
Consider the following conditions for a module $M$:
\begin{enumerate}
\item[(i)] $M$ is \emph{(}finite\emph{)} $\Sigma$-Rickart and $M=\bigoplus_{i=1}^n H_i$ with $H_i$ a hollow endoregular module;
\item[(ii)] $S=\End_R(K)$ is a semiprimary \emph{(}right\emph{)} hereditary ring for every $K$ in $\add(M)$ and $_SK$ is flat.
\end{enumerate}
Then \emph{(i)}$\Rightarrow$\emph{(ii)}. In addition, if $M$ is finitely generated, then the two conditions are equivalent.
\end{cor}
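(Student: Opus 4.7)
The plan is to reduce the corollary to results already developed in the paper, with the Krull--Schmidt--Azumaya theorem as the only technical bridge. For the implication (i)$\Rightarrow$(ii), I would start with an arbitrary $K\in\add(M)$, so $K$ is isomorphic to a direct summand of some $M^{(\ell)}=\bigoplus_{i=1}^n H_i^{(\ell)}$. The first job is to transport the hollow endoregular decomposition from $M^{(\ell)}$ to $K$. Since each $H_i$ is hollow it is indecomposable, and combined with endoregularity, $\End_R(H_i)$ is a von Neumann regular ring whose only idempotents are $0$ and $1$, so it is a division ring and in particular local. Krull--Schmidt--Azumaya then forces $K\cong\bigoplus_{i=1}^n H_i^{(m_i)}$ for some integers $m_i\ge 0$. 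Hence $K$ is itself a finite direct sum of hollow endoregular modules, and by Lemma \ref{summandher} (or Proposition \ref{ricksub} in the $\Sigma$-Rickart case) $K$ inherits the (finite) $\Sigma$-Rickart property.

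With this in hand, Corollary \ref{rickperf1} applied to $K$ with $\ell=1$ yields that $\End_R(K)$ is a semiprimary (right) hereditary PWD, while Theorem \ref{endoringsh} supplies the flatness of ${}_{\End_R(K)}K$. This establishes (ii) for every $K\in\add(M)$.

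For the converse, assume $M$ is finitely generated and (ii) holds. Specialising $K=M$ gives that $S=\End_R(M)$ is semiprimary right hereditary and ${}_SM$ is flat. Since right hereditary is a strengthening of right semi-hereditary, Theorem \ref{endoringh} delivers that $M$ is $\Sigma$-Rickart, and hence also finite $\Sigma$-Rickart. Finally, $M$ is a finitely generated Rickart module whose endomorphism ring is semiprimary, so Proposition \ref{ricksp} produces the required decomposition $M=\bigoplus_{i=1}^n H_i$ with each $H_i$ hollow endoregular.

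The only step that requires any thought is the very first one: propagating the hollow endoregular decomposition from $M$ to every $K\in\add(M)$. The crucial input is the locality of each $\End_R(H_i)$, which is what unlocks Krull--Schmidt--Azumaya; once that is in place, every remaining step is a direct invocation of Corollary \ref{rickperf1}, Theorem \ref{endoringsh}, Theorem \ref{endoringh}, or Proposition \ref{ricksp}.
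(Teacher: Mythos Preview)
Your proof is correct and follows essentially the same route as the paper's. The only cosmetic difference is in how you transfer the hollow endoregular decomposition from $M^{(\ell)}$ to a summand $K$: you invoke Krull--Schmidt--Azumaya directly (using that each $\End_R(H_i)$ is a division ring, hence local), whereas the paper cites the cancellation property of $M^{(n)}$ from \cite[Corollary 4.20]{m}; both devices yield the same conclusion, and the remaining steps---Corollary \ref{rickperf1} and Theorem \ref{endoringsh} for (i)$\Rightarrow$(ii), and Theorem \ref{endoringh} together with Proposition \ref{ricksp} for the converse---are identical.
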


\begin{proof}
(i)$\Rightarrow$(ii) Let $K\in \add(M)$. Then $M^{(n)}=K\oplus L$ for some $n>0$. Hence $K$ is a finite $\Sigma$-Rickart module. On the other hand, $M^{(n)}$ has the cancellation property, by \cite[Corollary 4.20]{m}. Therefore $K$ satisfies the hypothesis of Corollary \ref{rickperf1}.  Thus $\End_R(K)$ is a semiprimary (right) hereditary ring. (ii)$\Rightarrow$(i) follows from \cite[Theorem 4.6]{lm1} and Proposition \ref{ricksp}.
\end{proof}

\centerline{Acknowledgments}
\vspace{0.2cm}

The authors are very thankful to Research Institute of Mathematical Sciences, Chungnam National University (CNU-RIMS), Republic of Korea, for the support of this research work. The first author gratefully acknowledges the support of this research work by the National Research Foundation of Korea (NRF) grant funded by the Korea government(MSIT)(2019R1F1A105988312)
\vspace{0.2cm}

\end{document}